 \newtheorem{theorem}{Theorem}[section]
 \newtheorem{corollary}[theorem]{Corollary}
 \newtheorem{proposition}[theorem]{Proposition}
 \newtheorem{example}[theorem]{Example}
\theoremstyle{definition}
 \theoremstyle{remark} 
 \newtheorem{remark}[theorem]{Remark}
\newcommand{\ep}{\varepsilon}
\newcommand{\p}{\partial}
\newcommand{\RR}{\mathbb{R}}
\newcommand{\TT}{\mathbb{T}}
\renewcommand{\Re}{\operatorname{Re}}
\renewcommand{\Im}{\operatorname{Im}}
\DeclareMathOperator{\diag}{diag}
\numberwithin{equation}{section}
\begin{document}
\title[]
{Local well-posedness for a fourth-order nonlinear dispersive system 
on the 1D torus
}
\author[E.~Onodera]{Eiji Onodera}
\address[Eiji Onodera]{Department of Mathematics and Physics, 
Faculty of Science and Technology, 
Kochi University, 
Kochi 780-8520, 
Japan}
\email{onodera@kochi-u.ac.jp}
\subjclass[2010]{35E15, 35G61, 35Q99, 53C21, 53C56}
\keywords
{
System of nonlinear fourth-order 
dispersive partial differential equations;
Local well-posedness; 
Gauge transformation;
Bona-Smith approximation; 
Generalized bi-Schr\"odinger flow
}
%
%
%
\maketitle
\begin{abstract}
This paper is concerned with the initial value problem for 
a system of one-dimensional fourth-order 
dispersive partial differential equations on the torus 
with nonlinearity involving derivatives up to second order. 
This paper gives sufficient conditions on the coefficients of the system 
for the initial value problem to be time-locally well-posed 
in Sobolev spaces with high regularity. 
The proof is based on the energy method  
combined with the idea of a gauge transformation 
and the technique of Bona-Smith type parabolic regularization.  
The sufficient conditions can been found 
in connection with  
geometric analysis on a fourth-order geometric 
dispersive partial differential equation for curve flows 
on a compact locally Hermitian symmetric space. 
\end{abstract}
\section{Introduction}
\label{section:introduction}
This paper investigates the initial value problem (IVP) for an $n$-component 
system of fourth-order nonlinear  
dispersive partial differential equations (PDEs): 
 \begin{alignat}{2}
 \left(
 \p_t-iM_a\p_x^4-iM_{\lambda}\p_x^2
 \right)
 Q
  &=F(Q, \p_xQ, \p_x^2Q)
  & \quad
  & \text{in}
    \quad
    \RR\times\TT,
 \label{eq:apde}
 \\
  Q(0,x)
   &=
   Q_{0}(x)
 & \quad
 & \text{in}
   \quad
   \TT. 
 \label{eq:adata}
 \end{alignat}
Here, $n$ is a positive integer, 
$Q={}^t(Q_1,\ldots,Q_n)(t,x):\RR\times \TT\to \mathbb{C}^n$ 
 is an unknown function,  
 $Q_0={}^t(Q_{01},\ldots,Q_{0n})(x):\TT\to \mathbb{C}^n$ 
 is a given initial function, 
$i=\sqrt{-1}$, $\TT=\RR / 2\pi \mathbb{Z}$, 
$M_a=\diag(a_1,\ldots,a_n)$ for $(a_1,\ldots,a_n)\in (\RR\backslash \left\{0\right\})^n$, 
$M_{\lambda}=\diag(\lambda_1,\ldots,\lambda_n)$ for 
$(\lambda_1,\ldots, \lambda_n)\in \RR^n$, and
 \begin{align}
   F(Q,\p_xQ,\p_x^2Q)
   &=
   {}^t(F_1(Q,\p_xQ,\p_x^2Q),\ldots,F_n(Q,\p_xQ,\p_x^2Q)) 
 \nonumber
 \end{align}
where
each $F_j(u,v,w)$ for $j\in \left\{1,\ldots,n\right\}$ is a 
complex-valued polynomial in $u,\overline{u}$, 
$v, \overline{v}$,  
$w,\overline{w}\in \mathbb{C}^{n}$ 
satisfying 
\begin{align}
F_j(Q,\p_xQ, \p_x^2Q)
&=
\sum_{p,q,r=1}^n
\omega_{p,q,r}^{1,j}
\p_x^2Q_p\overline{Q_q}Q_r 
+
\sum_{p,q,r=1}^n
\omega_{p,q,r}^{2,j}
\overline{\p_x^2Q_p}Q_qQ_r
\nonumber
\\
&\quad
+\sum_{p,q,r=1}^n
\omega_{p,q,r}^{3,j}
\p_xQ_p\overline{\p_xQ_q}Q_r 
+
\sum_{p,q,r=1}^n
\omega_{p,q,r}^{4,j}
\p_xQ_p\p_xQ_q\overline{Q_r}
\nonumber 
\\
&\quad 
+O(|Q|^3+|Q|^5), 
\label{eq:b21}
\end{align}
and $\omega_{p,q,r}^{k,j}$ for 
$p,q,r,j\in \left\{1,\ldots, n\right\}$ and 
$k\in \left\{1,\ldots,4\right\}$
are complex constants. 
There is no loss of generality in assuming 
the following conditions (A1) and (A2): 
\begin{enumerate}
\item[(A1)] $\omega_{p,q,r}^{2,j}=\omega_{p,r,q}^{2,j}$ 
for any $p,q,r,j\in \left\{1,\ldots,n\right\}$. 
\item[(A2)] $\omega_{p,q,r}^{4,j}=\omega_{q,p,r}^{4,j}$ 
for any $p,q,r,j\in \left\{1,\ldots,n\right\}$.
\end{enumerate}  
  \par 
  Without the periodic boundary condition on the solution, 
  examples of \eqref{eq:apde} arise in some fields of nonlinear science. 
The following is an example of \eqref{eq:apde} for $n=1$:  
  \begin{align}
  (\p_t-i\nu\p_x^4-i\p_x^2)\psi
  &=
  i\mu_1|\psi|^2\psi
  +i\mu_2|\psi|^4\psi
  +i\mu_3(\p_x\psi)^2\overline{\psi}
  \nonumber
  \\
  &\quad
   +i\mu_4|\p_x\psi|^2\psi
  +i\mu_5\psi^2\overline{\p_x^2\psi}
  +i\mu_6|\psi|^2\p_x^2\psi, 
  \label{eq:4shro}
  \end{align}
where $\psi=\psi(t,x):\RR\times \RR\to \mathbb{C}$ is a solution,  
and $\mu_k$ for $k\in \left\{1,\ldots,6\right\}$ and $\nu\ne 0$ 
  are all real constants. 
The equation is known to arise in relation with 
the continuum limit of a one-dimensional Heisenberg ferromagnetic 
spin chain systems(\cite{DKA,LPD,PDL}), 
the three-dimensional motion of a vortex filament(\cite{fukumoto,FM}), 
and the molecular excitations along the hydrogen bonding spine
 in an alpha-helical protein(\cite{DL}).  
Another example of \eqref{eq:apde} for $n\geqslant 2$ appears in 
\cite{WZY}, which is formulated by 
 \begin{align}
   Q_t
   &=i\alpha \left(
   \frac{1}{2} 
   Q_{xx}+QQ^{*}Q
   \right)
      +i\gamma
      \Biggl[
      \dfrac{1}{2}Q_{xxxx}
      +Q(Q_x^{*}Q)_x
      +Q_xQ_x^{*}Q
  \nonumber
  \\
   &\quad \quad
   +2(
   Q_{xx}Q^{*}Q
   +
   QQ^{*}Q_{xx}
   )
   +3\left\{
   Q_{x}Q^{*}Q_x
   +
   Q(Q^{*}Q)^2
   \right\}
   \Biggr]
   \label{eq:WZY}
   \end{align}
   for $Q(t,x)={}^t(Q_1(t,x),\ldots,Q_n(t,x)):\RR\times \RR\to \mathbb{C}^n$, 
   where $\gamma\ne 0$ and $\alpha$ are real constants, 
   ``$*$" denotes the Hermitian transpose. 
   It is pointed out in \cite{WZY} that   
   \eqref{eq:WZY} investigates the wave propagation 
   of $n$ distinct ultrashort optical fields in a fiber, 
   and models the broadband, ultrashort pulses propagation. 
\par 
This paper is concerned with the well-posedness of 
\eqref{eq:apde}-\eqref{eq:adata} 
for initial data in Sobolev spaces with high regularity.  
The main difficulty is the so-called loss of derivatives, 
occurring affected by the structure of the nonlinearity of \eqref{eq:apde} with the 
first- and the second-derivatives, 
which prevents the classical energy method from working. 
\par 
In the non-periodic case $x\in \RR$, it is well-known that 
the IVP for some kinds of dispersive PDEs  
admits the presence of such bad lower-order terms to some extent, 
which is essentially thanks to the smoothing effect, 
the solutions of which gain extra regularity with respect to the 
initial data. 
Indeed, in the case $n=1$ where \eqref{eq:apde} is a single 
equation, 
there are many literature on the 
well-posedness of \eqref{eq:apde}-\eqref{eq:adata} for initial data in Sobolev spaces 
$H^s(\RR; \mathbb{C})$ for some real number $s>0$. 
See 
\cite{HHW2006,HHW2007,HIT,HJ2005,HJ2007,HJ2011,RWZ,segata2003,segata2004} for more details. 
Also in the general case $n\geqslant 1$,  
the recent study \cite{onodera6}  
has demonstrated that \eqref{eq:apde}-\eqref{eq:adata} is time-locally well-posed  
in $H^m(\RR; \mathbb{C}^n)$ for integers $m\geqslant 4$. 
We mention that the time-local well-posedness can be  
derived without any restrictions on the coefficients 
$\omega_{p,q,r}^{k,j}$. 
\par 
In contrast, our periodic case $x\in \TT$ seems to require a strong 
structure on the nonlinearity of \eqref{eq:apde} for the IVP  
to be time-locally well-posed, because 
the smoothing effect on $\RR$ 
breaks down since the spatial domain $\TT$ is compact.   
The periodic setting has been investigated only for the case $n=1$ 
by Segata in \cite{segata} with an interest in the stability of a periodic 
standing wave solution. 
More concretely, 
applying the so-called modified energy method, 
he showed the IVP for 
\eqref{eq:4shro} on $\TT$ is time-locally well-posed in $H^m(\TT;\mathbb{C})$ 
 for integers $m\geqslant 4$. 
This can be interpreted that 
 the local well-posedness is established  
 under the assumption that all the coefficients $\mu_1,\ldots,\mu_6$ 
 in the nonlinearity are real.   
 He also showed it is time-globally well-posed 
 under an additional assumption on the 
 coefficients $\nu$ and $\mu_1,\ldots,\mu_6$.  
 To the best of the present author's knowledge, 
 no other previous results on the well-posedness  
 for \eqref{eq:apde} are available. 
  The purpose of the present paper is to give sufficient conditions 
  on the coefficients of \eqref{eq:apde} for 
  the IVP with $n\geqslant 1$ to be time-locally well-posedness, 
  as an extension of the previous result  in \cite{segata} for 
 \eqref{eq:4shro} to that for multi-component systems.  
  The motivation comes from a pure mathematical point of view, aiming at finding a 
  good solvable structure of \eqref{eq:apde} as a system which has not been focused 
  in the study of single equations. 
\par 
To state our main results, we introduce the following 
conditions (B1)-(B6): 
\begin{enumerate}
\item[(B1)] $\omega_{p,q,r}^{1,j}=\omega_{r,q,p}^{1,j}$ 
for any $p,q,r,j\in \left\{1,\ldots,n\right\}$. 
\item[(B2)] $\omega_{p,q,r}^{1,j}=-\overline{\omega_{j,r,q}^{1,p}}$ 
for any $p,q,r,j\in \left\{1,\ldots,n\right\}$. 
\item[(B3)] $\omega_{p,q,r}^{2,j}=-\overline{\omega_{r,j,p}^{2,q}}$ 
for any $p,q,r,j\in \left\{1,\ldots,n\right\}$. 
\item[(B4)] $\omega_{p,q,r}^{3,j}=\omega_{r,q,p}^{3,j}$ 
for any $p,q,r,j\in \left\{1,\ldots,n\right\}$. 
\item[(B5)] $\omega_{p,q,r}^{3,j}=-\overline{\omega_{j,r,q}^{3,p}}$ 
for any $p,q,r,j\in \left\{1,\ldots,n\right\}$. 
\item[(B6)] $\omega_{p,q,r}^{4,j}=-\overline{\omega_{j,r,q}^{4,p}}$ 
for any $p,q,r,j\in \left\{1,\ldots,n\right\}$. 
\end{enumerate}  
Our main results is now stated as follows: 
\begin{theorem}
 \label{theorem:lwp} 
 Suppose that $M_a=aI_n$ with $a\neq 0$ where $I_n$ denotes the 
 identity matrix of order $n$ and 
$F(Q,\p_xQ,\p_x^2Q)$ satisfies 
 all the conditions \textup{(A1)-(A2)} and \textup{(B1)-(B6)}. 
 Let $m$ be an integer with $m \geqslant 4$. 
 Then  
 \eqref{eq:apde}-\eqref{eq:adata} 
 is time-locally well-posed in $H^m(\TT;\mathbb{C}^n)$ 
 in the following sense:
 \begin{enumerate}
 \item[\textup{(i)}] (Existence and uniqueness.) 
 For any $Q_0\in H^m(\TT;\mathbb{C}^n)$, there exists 
 $T=T(\|Q_0\|_{H^4})>0$ and a unique solution 
 $Q\in C([-T,T]; H^{m}(\TT;\mathbb{C}^n))$
 to \eqref{eq:apde}-\eqref{eq:adata}.
 \item[\textup{(ii)}] (Continuous dependence with respect to the initial data.) 
 Suppose that $T>0$ and $Q\in C([-T,T];H^m(\TT;\mathbb{C}^n))$ are respectively 
 the time and the unique solution 
 to \eqref{eq:apde} with initial data $Q_0$ obtained in the above part \textup{(i)}. 
 Fix  $T^{\prime}\in (0,T)$. 
 Then for any $\eta>0$, there exists $\delta>0$ such that for any 
 $\widetilde{Q_0}\in H^m(\TT;\mathbb{C}^n)$ satisfying 
 $\|Q_0-\widetilde{Q}_0\|_{H^m}<\delta$, 
 the unique solution $\widetilde{Q}$ 
 to \eqref{eq:apde} with initial data $\widetilde{Q}_0$ exists on 
 $[-T^{\prime},T^{\prime}]\times \TT$ and 
 satisfies 
 $\|Q-\widetilde{Q}\|_{C([-T,T^{\prime}];H^m)}<\eta$.
 \end{enumerate}
 \end{theorem}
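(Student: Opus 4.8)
The plan is to reduce everything to a family of \emph{tame} a priori energy estimates and then to realize those estimates constructively by a Bona--Smith type parabolic regularization. The structural inputs that make the estimates close are the hypothesis $M_a=aI_n$, which turns the principal part into the scalar operator $ia\,\p_x^4I_n$ so that all $n$ components obey a single dispersion relation, together with the symmetry and conjugate-antisymmetry relations \textup{(A1)--(A2)} and \textup{(B1)--(B6)}. All constants below will be allowed to depend on $\|Q\|_{H^4}$ only, and for definiteness I describe the forward direction, the backward one being entirely analogous.

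For an integer $m\geqslant4$ and a smooth solution $Q$, I would differentiate \eqref{eq:apde} $m$ times, pair the equation for $\p_x^mQ_j$ with $\p_x^mQ_j$ in $L^2(\TT)$, and sum over $j$. Because $\p_x^4$ and $\p_x^2$ are self-adjoint while $M_a,M_\lambda$ are real, the dispersive terms are skew-adjoint and contribute nothing to $\tfrac{d}{dt}\sum_j\|\p_x^mQ_j\|_{L^2}^2$, so the entire difficulty sits in $2\Re\sum_j\langle\p_x^mF_j,\p_x^mQ_j\rangle$. Since $F$ carries two derivatives, the only genuinely dangerous contributions are those in which at least $m+1$ derivatives fall on a single factor; for the block $\p_x^2Q_p\,\overline{Q_q}Q_r$ the worst of these is the term with $m+2$ derivatives on $Q_p$, which after one integration by parts produces the balanced quantity $S=\sum_{j,p,q,r}\omega_{p,q,r}^{1,j}\int_{\TT}\p_x^{m+1}Q_p\,\overline{Q_q}Q_r\,\overline{\p_x^{m+1}Q_j}\,dx$ of top order $2m+2$. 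The key algebraic point is that relabelling $j\leftrightarrow p$, $q\leftrightarrow r$ and conjugating turns $S$ into $-S$ by virtue of \textup{(B2)}, so $S$ is purely imaginary and its real part, which is exactly the loss of derivatives, cancels. I would then verify the analogous vanishing for the other three cubic blocks $\overline{\p_x^2Q_p}Q_qQ_r$, $\p_xQ_p\overline{\p_xQ_q}Q_r$ and $\p_xQ_p\p_xQ_q\overline{Q_r}$, the conjugate-antisymmetry relations \textup{(B3)}, \textup{(B5)}, \textup{(B6)} playing the role of \textup{(B2)} and the symmetrizations \textup{(A1)--(A2)}, \textup{(B1)}, \textup{(B4)} being used to cast each block in the form to which the conjugation argument applies.

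The remaining obstruction is the layer of order $2m+1$, produced when a single derivative lands on the lower-order coefficients. Integrating by parts once more, the part built from the real part of the effective coefficient collapses to a harmless $\int(\p_x^2a_{\mathrm{eff}})\,|\p_x^mQ|^2\,dx$, whereas the part built from the imaginary part survives as a term of the shape $\int b\,\Im\!\big(\p_x^{m+1}Q\cdot\overline{\p_x^mQ}\big)\,dx$ that still exceeds the $H^m$ budget. This is precisely what a gauge transformation eliminates: I would replace the plain energy by a modified energy $E_m(Q)=\sum_j\int_{\TT}e^{\phi}\,|\p_x^mQ_j|^2\,dx$ with a real weight $\phi=\phi(Q)$ chosen so that the commutator of the fourth-order dispersion with $e^{\phi}$ exactly absorbs the surviving $b$-term; here $M_a=aI_n$ is what permits a single scalar weight, and \textup{(B1)--(B6)} are what guarantee that the relevant $b$ is the $x$-derivative of a single-valued $2\pi$-periodic function so that $\phi$ is well defined on $\TT$ and $E_m(Q)$ stays equivalent to $\|Q\|_{H^m}^2$. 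All surviving terms then have order at most $2m$ and, since $m\geqslant4$ gives $H^m(\TT)\hookrightarrow C^2(\TT)$, are dominated by $C(\|Q\|_{H^4})\,E_m(Q)$. This yields the tame estimate $\tfrac{d}{dt}E_m(Q)\leqslant C(\|Q\|_{H^4})\,E_m(Q)$; taking $m=4$ fixes the existence time $T=T(\|Q_0\|_{H^4})$, and for each $m>4$ a Gronwall argument propagates the $H^m$ bound on the same interval.

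To construct the solution I would add a parabolic term, solving $(\p_t-ia\,\p_x^4-iM_\lambda\p_x^2+\ep\,\p_x^4)Q^{\ep}=F(Q^{\ep},\p_xQ^{\ep},\p_x^2Q^{\ep})$ with mollified data $Q_0^{\ep}=\rho_{\ep}*Q_0$; for each fixed $\ep>0$ this is a semilinear parabolic system solvable by contraction in $C([0,T_{\ep}];H^m)$. Since the added term contributes $-\ep\|\p_x^{m+2}Q^{\ep}\|_{L^2}^2\leqslant0$ to $\tfrac{d}{dt}E_m$, the gauged estimate above holds uniformly in $\ep$, producing a uniform existence time and uniform $H^m$ bounds. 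Using the mollifier rates $\|\rho_{\ep}*Q_0\|_{H^{m+k}}=O(\ep^{-k})$ and $\|Q_0-\rho_{\ep}*Q_0\|_{H^{m-k}}=o(\ep^{k})$ together with the high- and low-norm energy estimates, I would show the family $\{Q^{\ep}\}$ is Cauchy in $C([-T,T];H^m)$ as $\ep\to0$, which simultaneously yields a solution in $C([-T,T];H^m)$ and, run for two nearby data, the continuous dependence asserted in part \textup{(ii)}; uniqueness follows from the same gauged estimate applied at low order to the difference of two solutions, whose coefficients are controlled by the $H^m$ norms of both. I expect the gauge step to be the main obstacle: verifying that \textup{(A1)--(A2)} and \textup{(B1)--(B6)} are exactly enough both to annihilate the order $2m+2$ real part and to turn the order $2m+1$ remainder into a gauge-removable term with a globally defined periodic weight is where the hypotheses are fully consumed, and everything downstream is then routine.
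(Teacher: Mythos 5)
Your overall skeleton (Bona--Smith regularization, energy method, gauge transformation) matches the paper's, but the core cancellation mechanism you propose does not work, in two places. First, your claim that \emph{all} order-$(2m+2)$ contributions vanish by the conditions (B1)--(B6) is false for the conjugate-linear blocks. Your conjugation-plus-relabelling argument is correct for the block $\p_x^2Q_p\overline{Q_q}Q_r$ via (B2), but for the block $\overline{\p_x^2Q_p}Q_qQ_r$ the top-order quantity is
\begin{equation*}
T=\sum_{j,p,q,r}\omega_{p,q,r}^{2,j}\int_{\TT}\overline{\p_x^{m+1}Q_p}\,\overline{\p_x^{m+1}Q_j}\,Q_qQ_r\,dx,
\end{equation*}
in which \emph{both} high-order factors are conjugated; its complex conjugate has both high-order factors unconjugated, so no permutation of indices can match $T$ with $-\overline{T}$, and (A1), (B3) do not make $\Re T$ vanish. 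This surviving piece is exactly why the paper isolates the conjugate-linear second-order operator $P_1^1(Q)\p_x^2$ (see \eqref{eq:b511} and Proposition~\ref{proposition:pF}) as a genuine loss of derivatives that must be removed by the gauge rather than by symmetry. Second, your gauge is structurally incapable of removing it: a scalar real weight $e^{\phi}I_n$ commutes with $i$ and with complex conjugation, and its commutator with the principal part $ia\p_x^4$ has leading term $4ia\phi_x e^{\phi}\p_x^3$, i.e.\ it reaches only \emph{third-order} bad terms --- which do not occur here --- while the actual obstructions are of second order ($P_1^1(Q)\p_x^2$) and first order ($P_{3,m}(Q)\p_x$), with matrix-valued and partly conjugate-linear coefficients. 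The paper's gauge \eqref{eq:V_j}, $V=\p_x^mQ+M_a^{-1}\Lambda(Q)\p_x^{m-2}Q$, is an order-$(-2)$ matrix correction whose commutator $[\p_x^4,\Lambda\p_x^{-2}]$ reaches the first-order terms, and whose conjugate-linear component $\Lambda_1$ produces the zeroth-order commutator $[iI_n,\Lambda_1]=-P_1^1-\tfrac12P_2^1$ (\eqref{eq:add10284}), which is what kills the second-order loss; no scalar, $\mathbb{C}$-linear weight can generate either effect. Relatedly, $M_a=aI_n$ is consumed in the paper not to permit a scalar weight but to ensure $[M_a,\Lambda(Q^{\ep})]=0$ in \eqref{eq:comm930}, and the conditions (B1)--(B6) are consumed as pointwise algebraic identities (the $\Gamma$-structure of the $S_{p,q,r}^{\ell,j}$, Propositions~\ref{proposition:BC}--\ref{proposition:GC} and \ref{proposition:symskewsym}), not as mean-zero conditions guaranteeing periodicity of a potential $\phi$.

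Two further, smaller gaps: your regularization $\ep\,\p_x^4$ with mollification at scale $\ep$ breaks the Cauchy estimate for the approximating family --- in the difference equation the forcing $(\nu-\mu)\p_x^4Q^{\nu}$ costs $\|Q^{\nu}\|_{H^{m+4}}\lesssim\nu^{-4}$, so the bound diverges like $\nu^{-3}$; the paper uses $\ep^5\p_x^4$ precisely so that $(\nu^5-\mu^5)\|Q^{\nu}\|_{H^{m+4}}\lesssim\nu$ in \eqref{eq:90172} (this is fixable by rescaling, but as written it fails). And for uniqueness and the $H^1$ difference estimate, "the same gauged estimate at low order'' is not available: the correction $\Lambda(Q)\p_x^{m-2}$ would require $\p_x^{-1}W$ at the $H^1$ level, which does not make sense on $\TT$; the paper instead builds the modified energy \eqref{eq:5313}, keeping only the meaningful cross term $-\Re\langle W,M_a^{-1}\Lambda(Q^{\mu})W\rangle$ (see Remark~\ref{remark:ME}), an extra idea your proposal would also need.
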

\begin{example}
\label{ex:1}
 We consider \eqref{eq:apde} for $n=1$. It is immediate to see that 
 all the conditions \textup{(A1)-(A2)} and \textup{(B1)-(B6)} hold 
 if and only if
$\omega_{1,1,1}^{k,1}\in i\RR$
for all $k\in \left\{1,\ldots,4\right\}$. 
Theorem~\ref{theorem:lwp} 
does not conflict with the fact that the time-local well-posedness for \eqref{eq:4shro} 
on $\TT$ was established in \cite{segata} under the setting 
$i\mu_k\in i\RR$ for all $k\in \left\{1,\ldots,6\right\}$. 
\end{example}
\begin{example}
\label{ex:2}
We consider \eqref{eq:WZY} for $n\geqslant 1$. 
A simple computation  
shows \eqref{eq:WZY} is formulated as 
\eqref{eq:apde}, where 
$M_a=(\gamma/2)I_n$, $M_{\lambda}=(\alpha/2)I_n$, and  
\begin{align}
\omega_{p,q,r}^{1,j}
&=2i\gamma\left(\delta_{pq}\delta_{rj}
+\delta_{pj}\delta_{rq}
\right), 
\quad
\omega_{p,q,r}^{2,j}=
i\dfrac{\gamma}{2}\left(\delta_{pq}\delta_{rj}
+\delta_{pr}\delta_{qj}
\right), 
\nonumber
\\
\omega_{p,q,r}^{3,j}&=
i\gamma\left(\delta_{pq}\delta_{rj}
+\delta_{pj}\delta_{rq}
\right), 
\quad 
\omega_{p,q,r}^{4,j}=
i\dfrac{3\gamma}{2}\left(\delta_{qr}\delta_{pj}
+\delta_{pr}\delta_{qj}
\right)
\nonumber
\end{align}
for all $p,q,r,j\in \left\{1,\ldots,n\right\}$, 
where $\delta$ denotes the Kronecker delta.  
It is easy to check 
all the conditions 
\textup{(A1)-(A2)} 
and 
\textup{(B1)-(B6)} are satisfied. 
\end{example}
\begin{example}
\label{ex:3}
Unlike the above two examples, the sets 
$\left\{\omega_{p,q,r}^{k,j} \mid p,q,r,j\in \left\{1,\ldots,n\right\}
\right\}$ 
for $k\in \left\{1,\ldots,4\right\}$    
under \textup{(A1)-(A2)} 
and 
\textup{(B1)-(B6)}
are not necessarily included in $i\RR$ in the case $n\geqslant 2$. 
We will present such an example of \eqref{eq:apde} for $n=2$ 
in Section~\ref{section:examples}. 
The difference with Example~\ref{ex:1} 
for $n=1$ 
indicates that dispersive systems have a room for well-posedness 
compared with single equations. 
\end{example} 
\par 
Our proof of Theorem~\ref{theorem:lwp} is based on the parabolic regularization of the Bona-Smith type
 and the energy method combined with the idea of  
 a gauge transformation. 
 The gauge transformation we call here behaves 
 as a summation of the identity and a pseudodifferential operator of order $-2$, and the commutator with 
  the fourth-order principal part of \eqref{eq:apde}  
  cancels out the loss of derivatives. 
  The actual proof of the existence of a solution and the continuous dependence with respect to the initial data, 
  corresponding to  Proposition~\ref{proposition:preloc} 
  and \eqref{eq:mcauchy} in Proposition~\ref{proposition:cauchy}, 
  is derived by suitable energy estimates 
  for higher order $x$-derivatives of the solution. 
  Since the gauge transformation acts on images of the 
  partial differentiation $\p_x^2$ in the from 
  \eqref{eq:Nml}-\eqref{eq:V_j} and analogically 
  \eqref{eq:Ekl}-\eqref{eq:Z_j}
  by introducing operators $\Lambda(Q^{\ep})$ and $\Lambda(Q^{\mu})$, 
explicit pseudodifferential calculus is not required. 
However, the strategy does not valid directly in the actual proof of \eqref{eq:1cauchy} in Proposition~\ref{proposition:cauchy} and 
the uniqueness of the solution, 
  since we show them by estimating 
  the difference of two solutions in $H^1(\TT;\mathbb{C}^n)$. 
  The $H^1$-estimates are required to make our proof rigorous under the regularity 
  $m\geqslant 4$.  
  Hence we adjust the above  gauge transformation 
  to work also in the $H^1$-level, introducing 
  \eqref{eq:5313} and analogically \eqref{eq:p5313} 
  by using $\Lambda(Q^{\mu})$ and $\Lambda(Q^1)$.  
  We can mention this part is the mix of the idea of the gauge transformation 
  and the modified energy method used in \cite{Kwon,segata}. 
  See Remark~\ref{remark:ME} 
  for more details.  
 \par 
 We can find the conditions (B1)-(B6)   
  in connection with the study 
  on a fourth-order geometric dispersive PDE for curve flows: 
  Let $N$ be a compact locally Hermitian symmetric space  
  of complex dimension $n\geqslant 1$ 
  with complex structure $J$ and K\"ahler metric $h$, 
  and let $\nabla$ and $R$ denote the Levi-Civita connection associated to $h$ and the 
  Riemann curvature tensor respectively. 
  Consider  
  \begin{alignat}{2}
     & u_t
      =
      a\,J_u\nabla_x^3u_x
      +
      \lambda\, J_u\nabla_xu_x
      +
      b\, R(\nabla_xu_x,u_x)J_uu_x
      +
      c\, R(J_uu_x,u_x)\nabla_xu_x
    \label{eq:pde}
    \end{alignat}
    for $u=u(t,x):\RR\times X\to N$, where 
    $X=\RR$ or $\TT$, 
    and $a\ne 0$, $b$, $c$, $\lambda$ are real constants,  
    $u_t=du\left(\frac{\p}{\p t}\right)$, $u_x=du\left(\frac{\p}{\p x}\right)$, 
    $\nabla_t$ and $\nabla_x$ denote the covariant derivatives 
    along $u$ with respect to $t$ and $x$ respectively, 
    and $J_u$ denotes the complex structure at $u=u(t,x)\in N$. 
    The equation \eqref{eq:pde} is a fourth-order extension of the 
    one-dimensional Schr\"odinger flow 
    equation (see, e.g., \cite{CSU,Koiso1995,Koiso1997,NSVZ,RRS}), 
    and also coincides with  
    the so-called generalized bi-Schr\"odinger flow equation 
    introduced by Ding and Wang in \cite{DW2018}
    if $c=3(a-b)/2$. 
 In \cite{onodera5,onodera6}, the present author  investigated \eqref{eq:pde} in the case $X=\TT$ 
 and showed time-local existence of a 
 unique solution to the IVP for initial data in a kind of 
 geometric Sobolev space. 
 The proof was based on a geometric energy method 
 combined with a gauge transformation, 
 clarifying the structure of the equation satisfied by $\nabla_x^mu_x$ 
 in detail. 
 Considering them, the present author tried to 
 associate the structure of the equation satisfied by $\nabla_x^mu_x$
 with that satisfied by $\p_x^mQ$ 
 (where $Q$ is a solution to \eqref{eq:apde}) 
 via the so-called Generalized Hasimoto transformation. 
The approach successfully leads us to find
 the conditions (C1)-(C4) and (G1)-(G5) (introduced in Section~\ref{section:condition})
  and the suitable gauge transformation which works to show 
  Theorem~\ref{theorem:lwp}.   
 Moreover,  
 $$
 \textup{(B1)-(B6) }\iff \textup{(C1)-(C4)}
 \iff
 \textup{(G1)-(G5)}
 $$ 
 holds under (A1)-(A2). See Propositions~\ref{proposition:BC} and \ref{proposition:GC} 
 in Section~\ref{section:condition}
 for the detail. 
\par 
Additionally, 
the theory on the $L^2$-well-posedness for 
linear dispersive PDEs for complex- or real-valued functions on $\TT$ 
or their two-component systems lies behind the idea  
of the proof of Theorem~\ref{theorem:lwp}:  
In \cite{Mizuhara}, 
Mizuhara considered the following linear dispersive PDE: 
\begin{align}
&\left\{
\p_t-i
\left(
D_x^4+a(x)D_x^3+b(x)D_x^2+c(x)D_x+d(x)
\right)
\right\}
u
=f(t,x)
\label{eq:Mizuhara}
\end{align}
for $u=u(t,x):\RR\times \TT\to \mathbb{C}$, 
where $D_x=-i\p_x$, 
$a(x), b(x), c(x), d(x)\in C^{\infty}(\TT;\mathbb{C})$ 
and $f(t,x)\in L^1_{\text{loc}}(\RR; L^2(\TT; \mathbb{C}))$ are 
given functions. He showed the IVP for \eqref{eq:Mizuhara} is 
$L^2$-well-posed if and only if all the following three conditions hold:
\begin{align}
&\Im
\int_{0}^{2\pi}a(x)dx
=0, 
\quad 
\Im
\int_{0}^{2\pi}\left\{b(x)-\dfrac{3}{8}a(x)^2\right\}dx=0, 
\nonumber
\\
&
\Im
\int_{0}^{2\pi}\left\{
c(x)-\dfrac{a(x)b(x)}{2}+\dfrac{a(x)^3}{8}
\right\}dx=0.
\nonumber
\end{align}
In \cite{chihara2015}, 
Chihara considered the following linear dispersive system: 
\begin{align}
&\left\{
\begin{pmatrix}
   1 & 0 \\
   0 & 1
\end{pmatrix}\p_t+i
\left(
\begin{pmatrix}
   1 & 0 \\
   0 & -1
\end{pmatrix}D_x^4+A(x)D_x^3+B(x)D_x^2+C(x)D_x+D(x)
\right)
\right\}
{\bf u}
\nonumber
\\
&
={\bf f}(t,x)
\label{eq:chihara1}
\end{align}
for ${\bf u}={\bf u}(t,x):\RR\times \TT\to \mathbb{C}^2$, 
where 
$A(x), B(x), C(x), D(x)\in C^{\infty}(\TT;M_2(\mathbb{C}))$ 
and ${\bf f}(t,x)\in L^1_{\text{loc}}(\RR; L^2(\TT; \mathbb{C}^2))$ are 
given functions.   
He gave a necessary and sufficient condition on 
$A(x),$ $B(x)$, $C(x)$, $D(x)$ for the IVP of  \eqref{eq:chihara1} 
to be $L^2$-well-posed. 
If we assume $A(x)\equiv 0$ for simplicity, 
then the necessary and sufficient condition
reads 
\begin{align}
\Im
\int_{0}^{2\pi}b_{kk}(x)dx
=0
\ \ \text{and}
\ \
\Im
\int_{0}^{2\pi}c_{kk}(x)dx=0 
\ \
\text{for $k=1, 2$}, 
\nonumber
\end{align}
where $b_{kk}(x)$ and $c_{kk}(x)$ denote 
the $(k,k)$-component of $B(x)$ and that of $C(x)$ respectively. 
He also considered the system as a specialization of \eqref{eq:chihara1}: 
\begin{align}
\left\{
\begin{pmatrix}
   1 & 0 \\
   0 & 1
\end{pmatrix}\p_t+
\left(
\begin{pmatrix}
   0 & -1 \\
   1 & 0
\end{pmatrix}\p_x^4+\beta(x)\p_x^2+\gamma(x)\p_x
\right)
\right\}
{\bf w}&
={\bf h}(t,x)
\label{eq:chihara2}
\end{align}
for ${\bf w}={\bf w}(t,x):\RR\times \TT\to \mathbb{R}^2$, 
where 
$\beta(x), \gamma(x)\in C^{\infty}(\TT;M_2(\mathbb{R}))$ 
and ${\bf h}(t,x)\in L^1_{\text{loc}}(\RR; L^2(\TT; \mathbb{R}^2))$ are 
given functions. 
He gave a necessary and sufficient condition for the IVP of  \eqref{eq:chihara2} 
to be $L^2$-well-posed, 
presenting another proof of the sufficiency of the condition 
with the application to a geometric dispersive PDE 
for closed curve flows on a compact Riemann surface in mind. 
The proof of the time-local existence and uniqueness results for the IVP of \eqref{eq:pde} in \cite{onodera4,onodera5} is actually inspired by the proof of the sufficiency.   
This motivates us to apply the theory to our problem \eqref{eq:apde}-\eqref{eq:adata} as well. 
Applying it involves a  
classification of the nonlinear terms by distinguishing their coefficient part, 
which seemed to  be a nontrivial task  
because the nonlinearity in \eqref{eq:apde} consists of the cubic products  
of $Q$, $\overline{Q}$, $\p_xQ$, $\overline{\p_xQ}$, 
$\p_x^2Q$, $\overline{\p_x^2Q}$. 
Taking them into account,   
the present author chose to try to   
associate with the study on \eqref{eq:pde} as stated above. 
By doing this, the structure of the equation for $\p_x^mQ$ is obtained clearly 
as \eqref{eq:b384} where 
operators $P_k^{\ell}(Q)$ (defined by \eqref{eq:b511}-\eqref{eq:b515}) 
turn out to play roles to distinguish the coefficient parts 
of nonlinear terms and to posses 
crucial properties (Proposition~\ref{proposition:symskewsym}) 
to prove Theorem~\ref{theorem:lwp}.   
Additionally, the $n$-component case for general $n\geqslant 1$ 
can be handled under the strategy. 
\par 
The assumption  
$M_a=aI_n$ 
ensures 
\begin{equation}
\left[M_a, \Lambda(Q^{\ep})\right]=
\left[M_a, \Lambda(Q^{\mu})\right]=\left[M_a, \Lambda(Q^{1})\right]=0. 
\label{eq:comm930}
\end{equation} 
Without \eqref{eq:comm930}, another loss of derivatives occurs and the argument 
of our proof breaks down. 
In other words, even if $M_a\ne aI_n$, 
our argument of the proof is still valid 
under an exceptional case where 
\eqref{eq:comm930} is satisfied.  
We present the example as follows:  
\begin{enumerate}
\item[(B7)] 
$\omega_{k,q,r}^{1,j}
=0$ 
unless $j=k$ 
for all $q,r,j,k\in \left\{1,\ldots,n\right\}$.  
\item[(B8)] $\omega_{k,q,r}^{2,j}=0$ unless $j=k$ 
for any $q,r,j,k\in \left\{1,\ldots,n\right\}$. 
\item[(B9)] $\omega_{k,q,r}^{3,j}+2\omega_{k,r,q}^{4,j}=0$ 
unless $j=k$ for any $q,r,j,k\in \left\{1,\ldots,n\right\}$. 
\end{enumerate}
\begin{corollary}
\label{cor:cor930} 
Under the same assumptions of Theorem~\ref{theorem:lwp} 
with additional conditions \textup{(B7)-(B9)},  
the IVP 
 \eqref{eq:apde}-\eqref{eq:adata} 
 is time-locally well-posed in $H^m(\TT;\mathbb{C}^n)$.
\end{corollary}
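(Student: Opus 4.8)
The plan is to reduce the corollary to Theorem~\ref{theorem:lwp} by observing that the hypothesis $M_a=aI_n$ enters the proof of Theorem~\ref{theorem:lwp} only through the commutator identities \eqref{eq:comm930}. Indeed, the gauge transformation is engineered so that the commutator of $\Lambda(Q^{\ep})$, $\Lambda(Q^{\mu})$, or $\Lambda(Q^{1})$ with the fourth-order principal operator cancels the loss of derivatives; at the stage where $iM_a\p_x^4$ is commuted past a gauge operator, one splits $[\,iM_a\p_x^4,\Lambda(\cdot)\,]$ into a contribution from $[\,\p_x^4,\Lambda(\cdot)\,]$, which produces the intended cancellation, and a remainder proportional to $[M_a,\Lambda(\cdot)]$. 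The latter is a genuine obstruction unless \eqref{eq:comm930} holds. Consequently, if I can verify \eqref{eq:comm930} directly from \textup{(B7)-(B9)} for a general diagonal $M_a=\diag(a_1,\ldots,a_n)$, then every energy estimate in the proof of Theorem~\ref{theorem:lwp} carries over verbatim, and the conclusion follows without any further work.

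First I would analyze the commutators entrywise. Each gauge operator differs from the identity by a pseudodifferential operator of order $-2$, of the form $\Theta(Q)\p_x^{-2}$ with $\Theta(Q)$ a matrix-valued multiplication whose entries are quadratic in $Q,\overline{Q}$ weighted by the coefficient tensors $\omega_{p,q,r}^{k,j}$. Since $M_a$ has constant coefficients, it commutes with $\p_x^{-2}$, so $[M_a,\Theta(Q)\p_x^{-2}]=[M_a,\Theta(Q)]\p_x^{-2}$, and because $M_a$ is diagonal the $(j,k)$-entry of $[M_a,\Theta(Q)]$ equals $(a_j-a_k)$ times the $(j,k)$-entry of $\Theta(Q)$. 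Hence \eqref{eq:comm930} holds whenever each $\Theta(Q)$ is diagonal, and in particular it suffices that the $(j,k)$-entry of the gauge part vanishes for all $j\ne k$.

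Next I would match each condition to the corresponding part of the gauge operators. The $(j,k)$-entry of $\Lambda(Q^{\ep})$ collects the coefficients multiplying the $\p_x^2Q_k$-type contributions in $F_j$, which are governed by $\omega^{1}$ and $\omega^{2}$; thus conditions \textup{(B7)} and \textup{(B8)}, forcing $\omega_{k,q,r}^{1,j}=\omega_{k,q,r}^{2,j}=0$ unless $j=k$, annihilate exactly these off-diagonal entries. For the part of the gauge transformation produced by the first-order nonlinearity, the operators $P_k^{\ell}(Q)$ (defined by \eqref{eq:b511}--\eqref{eq:b515}) and the accompanying reduction of one derivative merge $\omega^{3}$ and $\omega^{4}$ into the combination $\omega_{k,q,r}^{3,j}+2\omega_{k,r,q}^{4,j}$; condition \textup{(B9)} is precisely the statement that this combination vanishes off the diagonal. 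Collecting these, all three gauge operators $\Lambda(Q^{\ep})$, $\Lambda(Q^{\mu})$, $\Lambda(Q^{1})$ commute with $M_a$, which is \eqref{eq:comm930}.

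The main obstacle I anticipate is the bookkeeping in the previous step: one must track how $\omega^{3}$ and $\omega^{4}$ actually enter $\Lambda(Q^{\mu})$ and $\Lambda(Q^{1})$ after the integration-by-parts and normal-form manipulations that define the gauge transformation, and confirm that the surviving coefficient is exactly $\omega_{k,q,r}^{3,j}+2\omega_{k,r,q}^{4,j}$ rather than some other linear combination. This requires re-examining the explicit definitions \eqref{eq:Nml}--\eqref{eq:V_j}, \eqref{eq:Ekl}--\eqref{eq:Z_j}, \eqref{eq:5313}, and \eqref{eq:p5313}, together with the symmetry \textup{(A2)} to account correctly for the factor $2$ and the index transposition $q\leftrightarrow r$. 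Once this identification is settled the argument closes immediately, since no other step in the proof of Theorem~\ref{theorem:lwp} used the special scalar form of $M_a$.
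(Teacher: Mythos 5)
Your proposal is correct and takes essentially the same route as the paper's own proof: both reduce the corollary to verifying \eqref{eq:comm930} for diagonal $M_a$, note that $[M_a,\Lambda(\cdot)]$ vanishes once the matrix parts of the gauge operators are diagonal (since the $(j,k)$-entry of the commutator is $(a_j-a_k)$ times that of the coefficient matrix), and match the off-diagonal entries to the conditions exactly as you predict, namely \textup{(B8)} kills the $S_{p,k,r}^{1,j}$ part (i.e.\ $\omega^{2}$), \textup{(B7)} kills the $S_{p,q,k}^{1,j}-2S_{p,q,k}^{2,j}$ part (which reduces to $i\omega_{k,q,p}^{1,j}$ under \textup{(A1)}), and \textup{(B9)} kills the combination $S_{p,q,k}^{1,j}-2S_{k,q,p}^{1,j}-S_{p,q,k}^{3,j}$, which under \textup{(A1)-(A2)} reduces to precisely $i\bigl(\tfrac{1}{2}\omega_{k,q,p}^{3,j}+\omega_{k,p,q}^{4,j}\bigr)$, confirming the coefficient $\omega_{k,q,r}^{3,j}+2\omega_{k,r,q}^{4,j}$ you anticipated. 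The one detail you gloss over is that $\Lambda_1$ contains a conjugate-linear part $B_1(Q)\overline{v}$, so the entrywise commutator argument additionally uses $\overline{M_av}=M_a\overline{v}$, which holds because $M_a$ is real diagonal and is noted explicitly in the paper.
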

The  additional (B7)-(B9) correspond to be  
sufficient conditions 
that each of $\Lambda(Q^{\ep})$, $\Lambda(Q^{\mu})$, $\Lambda(Q^{1})$  
behaves as a diagonal matrix.  
Unfortunately however, the 
conditions may be a bit artificial and rather strong. See Remark~\ref{remark:add930}.
\par 
The organization of the present paper is as follows: 
Section~\ref{section:condition} gives the background on 
the conditions (B1)-(B6). 
Sections~\ref{section:local}-\ref{section:prooflw} 
complete the proof of Theorem~\ref{theorem:lwp} 
and Corollary~\ref{cor:cor930}. 
Section~\ref{section:examples} is on Example~\ref{ex:3}. 
Section~\ref{section:GB}, which can be read independently, 
explains the background of our results with relevance to the study on 
\eqref{eq:pde}.  
 \section*{Notation used throughout this paper}
\label{section:notation} 
      Different positive constants are sometimes denoted by the same $C$ for simplicity, 
      if there seems to be no confusion. 
      Expressions such as $C=C(\cdot,\ldots,\cdot)$ and  
      $C_k=C_k(\cdot,\ldots,\cdot)$ are also sometimes 
      used to show the dependency on quantities appearing in parenthesis. 
      Other symbols to denote a constant are explained on each occasion.
   \par 
   For any 
   $z={}^{t}(z_1,\ldots,z_n)$ and $w={}^{t}(w_1,\ldots,w_n)$ 
   in $\mathbb{C}^n$, their inner product is defined by 
   $z\cdot w=\sum_{j=1}^{n}z_j\overline{w_j}$, 
   and the norm of $z$ is by 
   $|z|=(z\cdot z)^{1/2}$. 
   \par 
  The $L^2$-space of $\mathbb{C}^n$-valued functions on $\TT$ is denoted by 
  $L^2(\TT;\mathbb{C}^n)$ being the set of all measurable functions 
    $f={}^{t}(f_1,\ldots,f_n):\TT\to \mathbb{C}^n$ such that 
    $$
    \|f\|_{L^2}:=\left(
    \int_{\TT}|f(x)|^2\,dx
    \right)^{1/2}
    =\left(
    \sum_{j=1}^n
      \int_{0}^{2\pi}f_j(x)\overline{f_j(x)}\,dx
      \right)^{1/2}
    <\infty.
    $$
For any $f={}^{t}(f_1,\ldots,f_n)$ and  
$g={}^{t}(g_1,\ldots,g_n)$ in $L^2(\TT;\mathbb{C}^n)$, 
we set 
\begin{align}
\langle
f,g
\rangle
&:=
\int_{\TT} f(x)\cdot g(x) \,dx 
=
\sum_{j=1}^n
\int_{0}^{2\pi}f_j(x)\overline{g_j(x)}\,dx,   
\nonumber
\end{align}
which lets us  write 
$\|f\|_{L^2}^2=
\langle
f,f
\rangle$.  
  The $L^2$-type  Sobolev space for a nonnegative integer $k$ is denoted by 
  $H^k(\TT;\mathbb{C}^n)$  
   being the set of all measurable functions 
   $f={}^{t}(f_1,\ldots,f_n):\TT\to \mathbb{C}^n$ such that 
  $\p_x^{\ell}f\in L^2(\TT;\mathbb{C}^n)$ for  
  all $\ell\in \left\{0,\ldots,k\right\}$. 
  The norm $\|f\|_{H^k}$ of $f\in H^k(\TT;\mathbb{C}^n)$ 
  is defined to satisfy 
  $\|f\|_{H^k}^2=
   \sum_{\ell=0}^{k}\|\p_x^{\ell}f\|_{L^2}^2.
   $ 
   Moreover, $H^{\infty}(\TT;\mathbb{C}^n)$ denotes the intersection 
   of all $H^k(\TT;\mathbb{C}^n)$ for $k=0,1,2,\ldots$, 
and $C([t_1,t_2];H^k(\RR;\mathbb{C}^n))$ denotes the 
Banach space of $H^k(\TT;\mathbb{C}^n)$-valued continuous functions on 
the interval $[t_1,t_2]$ with the norm 
$\|Q\|_{C([t_1,t_2];H^k)}:=\sup_{t\in [t_1,t_2]}\|Q(t,\cdot)\|_{H^k}$.
\section{Remark on the sufficient condition}
\label{section:condition} 
We introduce the set $\Gamma:=\Gamma_1\cap \Gamma_2$, 
where  
\begin{align}
\Gamma_1
&=
\left\{
f(p,q,r,j):\left\{1,\ldots,n\right\}^4\to \mathbb{C}
\middle| 
\begin{array}{l}
\text{
$f(p,q,r,j)=f(r,q,p,j)$  
}
\\
\text{for all $p,q,r,j\in \left\{1,\ldots,n\right\}$}
 \end{array}
\right\}, 
\nonumber
\\
\Gamma_2
&=
\left\{
f(p,q,r,j):\left\{1,\ldots,n\right\}^4\to \mathbb{C}
\middle| 
\begin{array}{l}
\text{
$f(p,q,r,j)=\overline{f(j,r,q,p)}$
}
\\
\text{for all $p,q,r,j\in \left\{1,\ldots,n\right\}$}
 \end{array}
\right\}.
\nonumber
\end{align} 
Note that $\Gamma_1$, $\Gamma_2$, $\Gamma$ 
are vector spaces over $\RR$ 
with respect to the operations of pointwise addition and scalar multiplication.   
\begin{proposition}
\label{proposition:BC}
Under the conditions \textup{(A1)-(A2)}, 
all the conditions 
\textup{(B1)-(B6)}
hold if and only if all the following conditions  
\textup{(C1)-(C4)} hold:
\begin{enumerate}
\item[\textup{(C1)}] $f_1(p,q,r,j):=\omega_{p,q,r}^{1,j}\in i\Gamma$.
\item[\textup{(C2)}] $f_2(p,q,r,j):=\omega_{q,p,r}^{2,j}\in i\Gamma$.
\item[\textup{(C3)}] $f_3(p,q,r,j):=\omega_{p,q,r}^{3,j}\in i\Gamma$.
\item[\textup{(C4)}] $f_4(p,q,r,j):=\omega_{p,r,q}^{4,j}\in i\Gamma$.
\end{enumerate}  
\end{proposition}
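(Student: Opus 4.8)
The plan is to reduce membership in $i\Gamma$ to two explicit pointwise identities, and then to read off (B1)--(B6) by substituting each of the four relabeled coefficient functions into those identities. The starting observation is the following elementary characterization: for any $f\colon\{1,\dots,n\}^4\to\mathbb{C}$ one has $f\in i\Gamma$ if and only if $-if\in\Gamma_1\cap\Gamma_2$, and unwinding the two defining symmetries shows this is equivalent to the conjunction of
\[
\text{(I)}\quad f(p,q,r,j)=f(r,q,p,j), \qquad \text{(II)}\quad f(p,q,r,j)=-\overline{f(j,r,q,p)},
\]
valid for all $p,q,r,j$. Here (I) is precisely the $\Gamma_1$-symmetry, unaffected by the scalar factor $i$ since no conjugation is involved; in (II) the factor $i$ converts the Hermitian symmetry defining $\Gamma_2$ into the displayed anti-Hermitian relation, because $\overline{-i}=i$.

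Next I would feed each $f_k$ into (I) and (II). For $f_1(p,q,r,j)=\omega^{1,j}_{p,q,r}$, identity (I) reads $\omega^{1,j}_{p,q,r}=\omega^{1,j}_{r,q,p}$, which is exactly (B1), while (II) reads $\omega^{1,j}_{p,q,r}=-\overline{\omega^{1,p}_{j,r,q}}$, which is exactly (B2); hence (C1) $\iff$ (B1)$\wedge$(B2). The same direct substitution, with the superscript $1$ replaced by $3$, gives (C3) $\iff$ (B4)$\wedge$(B5) for $f_3(p,q,r,j)=\omega^{3,j}_{p,q,r}$. Note that these two equivalences hold unconditionally, without invoking (A1)--(A2).

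The two remaining functions carry nontrivial permutations of the lower indices, and this is exactly where the standing hypotheses (A1)--(A2) enter. For $f_2(p,q,r,j)=\omega^{2,j}_{q,p,r}$, identity (I) becomes $\omega^{2,j}_{q,p,r}=\omega^{2,j}_{q,r,p}$, i.e.\ symmetry in the last two lower indices, which is (A1) after relabeling and is therefore automatic under (A1); meanwhile (II) becomes $\omega^{2,j}_{q,p,r}=-\overline{\omega^{2,p}_{r,j,q}}$, which is (B3) after the relabeling that swaps the first two free indices. Likewise, for $f_4(p,q,r,j)=\omega^{4,j}_{p,r,q}$, identity (I) collapses onto (A2) and (II) onto (B6) after swapping the roles of $q$ and $r$. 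Thus, granted (A1)--(A2), one has (C2) $\iff$ (B3) and (C4) $\iff$ (B6), and assembling the four equivalences yields (C1)--(C4) $\iff$ (B1)--(B6) under (A1)--(A2), as claimed.

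I do not expect a genuine obstacle here: the argument is pure index bookkeeping. The only points demanding care are the interaction of the scalar $i$ with the conjugation in $\Gamma_2$ when establishing the characterization (I)--(II), and the two relabelings concealed in the definitions of $f_2$ and $f_4$, which must be tracked precisely so that (I) collapses onto (A1)/(A2) and (II) onto (B3)/(B6). In particular one must keep straight that for $f_2$ and $f_4$ the $\Gamma_1$-part (I) contributes no new constraint beyond the already-assumed (A1)--(A2), which is what makes the equivalence hold \emph{under} (A1)--(A2) rather than outright.
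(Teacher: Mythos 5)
Your proposal is correct and follows essentially the same route as the paper: the paper's (omitted) proof consists precisely of the four equivalences (C1) $\iff$ ((B1) and (B2)), (C2) $\iff$ ((A1) and (B3)), (C3) $\iff$ ((B4) and (B5)), (C4) $\iff$ ((A2) and (B6)), which you verify by the correct characterization $f\in i\Gamma \iff \bigl(f(p,q,r,j)=f(r,q,p,j)$ and $f(p,q,r,j)=-\overline{f(j,r,q,p)}\bigr)$ together with careful index relabeling. Your bookkeeping for $f_2$ and $f_4$ (where (I) collapses onto (A1), resp.\ (A2), and (II) onto (B3), resp.\ (B6)) is accurate, so you have simply supplied the details the paper omits.
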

This proposition easily follows from the definition of $\Gamma$.  
More concretely, the following four equivalences hold:  
\begin{alignat}{2}
\text{(C1)}&\iff 
(\text{(B1) and (B2)}),  
\quad 
\text{(C2)}&\iff 
(\text{(A1) and (B3)}),  
\nonumber
\\
\text{(C3)}&\iff 
(\text{(B4) and (B5)}),  
\quad 
\text{(C4)}&\iff 
(\text{(A2) and (B6)}). 
\nonumber
\end{alignat}
We omit the detail. 
\par
We next consider the following conditions (G1)-(G5): 
\begin{enumerate}
\item[(G\textup{$\ell$})] 
$g_{\ell}(p,q,r,j):=S_{p,q,r}^{\ell,j}\in \Gamma$
\quad ($\ell\in \left\{1,\ldots,5\right\}$), 
\end{enumerate}
where 
\begin{align}
S_{p,q,r}^{1,j}
&=
i\omega_{q,p,r}^{2,j}, 
\label{eq:S1}
\\
S_{p,q,r}^{2,j}
&=
-\dfrac{i}{2}
\left(
\omega_{r,q,p}^{1,j}-\omega_{q,r,p}^{2,j}
\right), 
\label{eq:S2}
\\
S_{p,q,r}^{3,j}
&=
-2S_{p,q,r}^{2,j}
-\dfrac{i}{2}
\left(
2\omega_{r,q,p}^{1,j}+\omega_{r,q,p}^{3,j}+\omega_{r,p,q}^{4,j}+\omega_{p,r,q}^{4,j}
\right), 
\label{eq:S3}
\\
S_{p,q,r}^{4,j}
&=
-\dfrac{i}{2}
\left(
-\omega_{r,q,p}^{3,j}+\omega_{r,p,q}^{4,j}+\omega_{p,r,q}^{4,j}
\right), 
\label{eq:S4}
\\
S_{p,q,r}^{5,j}
&=
\dfrac{i}{2}
\left(
\omega_{q,p,r}^{2,j}+\omega_{q,r,p}^{2,j}+\omega_{p,q,r}^{3,j}
\right). 
\label{eq:S5}
\end{align}
\begin{proposition}
\label{proposition:GC}
Under the condition \textup{(A2)}, the following conditions are 
equivalent: 
\begin{enumerate}
\item[\textup{(i)}] All the conditions \textup{(C1)-(C4)} hold. 
\item[\textup{(ii)}] All the conditions \textup{(G1)-(G2) and (G4)-(G5)} hold.
\item[\textup{(iii)}] All the conditions \textup{(G1)-(G5)} hold. 
\end{enumerate}
\end{proposition}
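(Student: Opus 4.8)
The plan is to reformulate membership in $\Gamma$ and in $i\Gamma$ through index symmetries and then establish the cycle $\mathrm{(i)}\Longrightarrow\mathrm{(iii)}\Longrightarrow\mathrm{(ii)}\Longrightarrow\mathrm{(i)}$, where the middle implication is trivial since the conditions in (ii) form a subfamily of those in (iii). Write $\sigma$ for the index transposition $(p,q,r,j)\mapsto(r,q,p,j)$ and denote by $f\circ\sigma$ the function $(p,q,r,j)\mapsto f(r,q,p,j)$. Then $f\in\Gamma_1$ means $f\circ\sigma=f$, and a one-line computation shows that $g\in i\Gamma$ holds if and only if $g\circ\sigma=g$ together with $g(p,q,r,j)=-\overline{g(j,r,q,p)}$. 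I will use only three elementary facts: $\Gamma$ and $i\Gamma$ are $\RR$-vector spaces; multiplication by $i$ interchanges them, so in particular $i\cdot(i\Gamma)=\Gamma$; and, since the relation defining $\Gamma_1$ is $\mathbb{C}$-linear, $i\Gamma_1=\Gamma_1$, whence every element of $i\Gamma\subseteq\Gamma_1$ is $\sigma$-invariant. Finally, rewriting \textup{(A2)} through $f_4(p,q,r,j)=\omega_{p,r,q}^{4,j}$ shows that \textup{(A2)} is precisely the statement $f_4\circ\sigma=f_4$.

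Next I would express each $S^{\ell}$ in terms of $f_1,\dots,f_4$ by inserting the substitution rules $\omega_{a,b,c}^{2,j}=f_2(b,a,c,j)$ and $\omega_{a,b,c}^{4,j}=f_4(a,c,b,j)$ into \eqref{eq:S1}--\eqref{eq:S5}. The only permutations appearing are the identity and $\sigma$, so each $S^{\ell}$ equals $i$ times a fixed real linear combination of the $f_k$ and the $f_k\circ\sigma$. For $\mathrm{(i)}\Longrightarrow\mathrm{(iii)}$ assume $f_1,\dots,f_4\in i\Gamma$; then each $f_k$ is $\sigma$-invariant, so every $f_k\circ\sigma$ collapses to $f_k$ and each $S^{\ell}$ becomes $i$ times a real combination of elements of $i\Gamma$. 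Such a combination again lies in the $\RR$-vector space $i\Gamma$, and multiplying by $i$ sends it into $\Gamma$; hence $S^1,\dots,S^5\in\Gamma$, that is, \textup{(G1)}--\textup{(G5)} hold.

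The substantive direction is $\mathrm{(ii)}\Longrightarrow\mathrm{(i)}$, which I would carry out as a bootstrap, not yet assuming the $f_k$ to be $\sigma$-invariant. Since $S^1=if_2$, the hypothesis $S^1\in\Gamma$ gives $-f_2=iS^1\in i\Gamma$, hence $f_2\in i\Gamma$, which is \textup{(C2)}; in particular $f_2$ is now $\sigma$-invariant. Feeding this into \textup{(G2)}, and using $f_2\circ\sigma=f_2$, yields $f_1\circ\sigma\in i\Gamma$; because members of $i\Gamma$ are $\sigma$-invariant and $\sigma$ is an involution, this forces $f_1\circ\sigma=f_1$, so $f_1\in i\Gamma$ and \textup{(C1)} holds. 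Combining \textup{(G5)} with the $\sigma$-invariance of $f_2$ then gives $f_3\in i\Gamma$, i.e. \textup{(C3)}.

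The last and most delicate step is \textup{(C4)}. Using the $\sigma$-invariance of $f_3$ established above, \textup{(G4)} only delivers the symmetrized combination $f_4+f_4\circ\sigma\in i\Gamma$, rather than $f_4$ itself; this is the main obstacle. It is resolved exactly by the standing hypothesis \textup{(A2)}: since \textup{(A2)} is equivalent to $f_4\circ\sigma=f_4$, the symmetrized combination equals $2f_4$, so $f_4\in i\Gamma$ and \textup{(C4)} follows. This completes $\mathrm{(ii)}\Longrightarrow\mathrm{(i)}$, and together with the trivial $\mathrm{(iii)}\Longrightarrow\mathrm{(ii)}$ closes the cycle, proving that $\mathrm{(i)}$, $\mathrm{(ii)}$, and $\mathrm{(iii)}$ are equivalent.
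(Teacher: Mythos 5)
Your proposal is correct, and every step checks out (I verified in particular that \textup{(A2)} is exactly the $\sigma$-invariance $f_4\circ\sigma=f_4$ for the transposition $\sigma:(p,q,r,j)\mapsto(r,q,p,j)$, that each $S^{\ell}$ in \eqref{eq:S1}--\eqref{eq:S5} is indeed $i$ times a real combination of the $f_k$ and $f_k\circ\sigma$, and that your bootstrap in (ii) $\Rightarrow$ (i) is sound). But you close the equivalence by a genuinely different cycle than the paper. The paper proves (i) $\Rightarrow$ (ii) $\Rightarrow$ (iii) $\Rightarrow$ (i); its delicate step is (ii) $\Rightarrow$ (iii), where \textup{(G3)} is derived intrinsically from \textup{(G1)-(G2)} and \textup{(G4)-(G5)} alone, by inverting \eqref{eq:S1}--\eqref{eq:S5} into \eqref{eq:b151}--\eqref{eq:b155} and comparing two expressions \eqref{eq:ti1} and \eqref{eq:ti2} for $i\omega^{3,j}_{r,q,p}$, the second obtained from the $p\leftrightarrow r$ invariance furnished by \textup{(G1)} and \textup{(G5)}; the subsequent step (iii) $\Rightarrow$ (i) then freely uses \textup{(G3)}, e.g.\ in the identity $if_4(r,q,p,j)+if_4(p,q,r,j)=-g_1-g_3-g_4$ for \textup{(C4)}. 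You instead prove (i) $\Rightarrow$ (iii) directly (trivial once all $f_k$ are $\sigma$-invariant, since $S^3$ is then a real combination of the $if_k\in\Gamma$ — this completely bypasses the paper's hardest computation), take (iii) $\Rightarrow$ (ii) for free, and put the substance into (ii) $\Rightarrow$ (i), which must succeed \emph{without} \textup{(G3)}: for \textup{(C4)} you extract only the symmetrization $f_4+f_4\circ\sigma\in i\Gamma$ from \textup{(G4)} and collapse it to $2f_4$ via \textup{(A2)}, and for \textup{(C1)} you use the neat involution argument that $f_1\circ\sigma\in i\Gamma\subseteq\Gamma_1$ forces $f_1\circ\sigma=f_1$. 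The trade-off: the paper's route isolates, as a standalone algebraic fact about the $S$'s, that \textup{(G3)} is redundant given the other four conditions (which is the real content of listing (ii) separately), whereas your route is shorter and more systematic — everything reduced to $\sigma$-invariance plus $\RR$-linear algebra in $\Gamma$ and $i\Gamma$ — but establishes that redundancy only indirectly, through the detour via (i).
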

\begin{proof}[Proof of Proposition~\ref{proposition:GC}] 
(i) $\Longrightarrow$ (ii): 
We shall check all the conditions (G1)-(G2) and (G4)-(G5) hold. 
Since $\Gamma$ is a vector space over $\RR$,  
it suffices to check each of $g_{\ell}(p,q,r,j)$ for $\ell=1,2,4,5$ 
is expressed as a linear combination 
of $-if_{k}(p,q,r,j)\in \Gamma$ for 
$k\in \left\{1,\ldots,4\right\}$ with real coefficients. 
First, 
it is easy to see 
$g_1(p,q,r,j)=S_{p,q,r}^{1,j}=i\omega_{q,p,r}^{2,j}=if_2(p,q,r,j)\in \Gamma$, which follows from \eqref{eq:S1} and (C2). 
Second, 
by the definition of $f_1(p,q,r,j)$ and $f_2(p,q,r,j)$ and \eqref{eq:S2},   
\begin{align}
g_2(p,q,r,j)
&=
-\dfrac{i}{2}
\left(
\omega_{r,q,p}^{1,j}-\omega_{q,r,p}^{2,j}
\right)
=
-\dfrac{i}{2}
\left\{
f_1(r,q,p,j)
-f_2(r,q,p,j)
\right\}. 
\nonumber
\end{align}
Since $-if_{k}(p,q,r,j)\in \Gamma\subset \Gamma_1$ for 
$k\in \left\{1,\ldots,4\right\}$, 
it follows that 
\begin{equation}
-if_{k}(p,q,r,j)
=-if_k(r,q,p,j)
\quad 
\text{for $k\in \left\{1,\ldots,4\right\}$.}
\label{eq:ifk}
\end{equation}
Using \eqref{eq:ifk} for $k=1,2$ and (C1) and (C2), we see 
\begin{align}
g_2(p,q,r,j)
&=
\dfrac{1}{2}
\left\{
-if_1(p,q,r,j)
\right\}
-\dfrac{1}{2}
\left\{
-i
f_2(p,q,r,j)
\right\}\in \Gamma. 
\nonumber
\end{align}
Third, in the same way above, 
using \eqref{eq:ifk} for $k=3,4$ and (C3) and (C4), 
we see 
\begin{align}
g_4(p,q,r,j)
&=
-\dfrac{i}{2}
\left\{
-f_3(r,q,p,j)+f_4(r,q,p,j)+f_4(p,q,r,j)
\right\}
\nonumber
\\
&=
-\dfrac{i}{2}
\left\{
-f_3(p,q,r,j)+2f_4(p,q,r,j)
\right\}
\in \Gamma. 
\nonumber
\end{align}
Finally, using \eqref{eq:ifk} for $k=2$ and (C2) and (C3), we have
\begin{align}
g_5(p,q,r,j)
&=
\dfrac{i}{2}
\left\{
2f_2(p,q,r,j)+f_3(p,q,r,j)
\right\}
\in \Gamma.
\nonumber
\end{align}
(ii) $\Longrightarrow$ (iii): 
It suffices to show (G3) follows from (G1)-(G2) and (G4)-(G5). 
By (G1), 
\begin{equation}
\label{eq:b1550}
\omega_{q,p,r}^{2,j}=\omega_{q,r,p}^{2,j} 
\quad \text{for any $p,q,r,j\in \left\{1,\ldots,n\right\}$}.
\end{equation} 
Using \eqref{eq:b1550} and rewriting 
\eqref{eq:S1}-\eqref{eq:S5} yields 
\begin{align}
i\omega_{r,q,p}^{1,j}
&=
S_{p,q,r}^{1,j}-2S_{p,q,r}^{2,j},  
\label{eq:b151}
\\
i\omega_{q,p,r}^{2,j}
&=
S_{p,q,r}^{1,j},
\label{eq:b152}
\\
 i(2\omega_{q,p,r}^{2,j}+\omega_{p,q,r}^{3,j})
 &=2S_{p,q,r}^{5,j}, 
 \label{eq:b153}
 \\
i(\omega_{r,q,p}^{1,j} 
+\omega_{r,q,p}^{3,j}) 
&=-2
S_{p,q,r}^{2,j}-
S_{p,q,r}^{3,j}+S_{p,q,r}^{4, j}, 
\label{eq:b154}
\\
i(\omega_{r,q,p}^{1,j} 
+
\omega_{r,p,q}^{4,j}
+
 \omega_{p,r,q}^{4,j})
&=
-2S_{p,q,r}^{2,j}-S_{p,q,r}^{3, j}-S_{p,q,r}^{4,j}.
\label{eq:b155}
\end{align}
Indeed,  \eqref{eq:b151}-\eqref{eq:b153} follow from 
\eqref{eq:S1}-\eqref{eq:S2}, 
\eqref{eq:S5} and \eqref{eq:b1550}.  
Moreover, \eqref{eq:b154} is obtained by subtracting \eqref{eq:S4} from \eqref{eq:S3} , 
and \eqref{eq:b155} is by 
adding \eqref{eq:S3} and \eqref{eq:S4}. 
Furthermore, by subtracting \eqref{eq:b151} from \eqref{eq:b154},  
\begin{align}
i\omega_{r,q,p}^{3,j} 
&=
-S_{p,q,r}^{1,j}
-S_{p,q,r}^{3,j}+S_{p,q,r}^{4, j}. 
\label{eq:ti1}
\end{align}
On the other hand, by \eqref{eq:b152} and \eqref{eq:b153}, we see 
$i\omega_{p,q,r}^{3,j}
 =
 -2S_{p,q,r}^{1,j}+2S_{p,q,r}^{5,j}$.
By (G1) and (G5), the right hand side of the above 
is invariant under the replacement of $p$ and $r$ with each other, 
and so is the left hand side, which implies
\begin{align}
i\omega_{r,q,p}^{3,j}
 &=
 -2S_{p,q,r}^{1,j}+2S_{p,q,r}^{5,j}.
 \label{eq:ti2}
\end{align}
Comparing \eqref{eq:ti1} and \eqref{eq:ti2}, we have 
$S_{p,q,r}^{3,j}
=
S_{p,q,r}^{1,j}+S_{p,q,r}^{4,j}-2S_{p,q,r}^{5,j}$. 
This shows (G3) follows from (G1) and (G4)-(G5). 
\\
(iii) $\Longrightarrow$ (i):  
We shall check all the conditions (C1)-(C4) hold, that is, 
$if_k(p,q,r,j)\in \Gamma$ for each 
$k\in \left\{1,\ldots,4\right\}$.  
First, by (G1), 
\begin{equation}
if_2(p,q,r,j)=i\omega_{q,p,r}^{2,j}=S_{p,q,r}^{1,j}=g_1(p,q,r,j)\in \Gamma. 
\label{eq:9184}
\end{equation}
This shows (C2) holds. 
Second, by the definition of $f_k(p,q,r,j)$ ($k=1,2$) 
and \eqref{eq:S2}, 
\begin{align}
-\dfrac{i}{2}f_1(p,q,r,j)
&=
-\dfrac{i}{2}\omega_{p,q,r}^{1,j}
=S_{r,q,p}^{2,j}
-\dfrac{i}{2}\omega_{q,p,r}^{2,j}
=g_2(r,q,p,j)
-\dfrac{i}{2}f_2(p,q,r,j). 
\nonumber
\end{align}
Noting $g_2(r,q,p,j)=g_2(p,q,r,j)\in \Gamma$ follows from (G2), 
and using \eqref{eq:9184}, we see 
\begin{align}
-\dfrac{i}{2}f_1(p,q,r,j)
&=
g_2(p,q,r,j)
-\dfrac{1}{2}g_1(p,q,r,j)
\in \Gamma.
\nonumber
\end{align}
This shows (C1) holds. 
Third, by \eqref{eq:S5}, 
\begin{align}
\dfrac{i}{2}f_3(p,q,r,j)
&=
\dfrac{i}{2}\omega_{p,q,r}^{3,j}
=
g_5(p,q,r,j)-\dfrac{i}{2}
(
\omega_{q,p,r}^{2,j}+\omega_{q,r,p}^{2,j}
).
\nonumber
\end{align} 
Here, 
$i\omega_{q,p,r}^{2,j}=g_1(p,q,r,j)\in \Gamma$ follows from \eqref{eq:9184}, 
and thus 
$i\omega_{q,p,r}^{2,j}=i\omega_{q,r,p}^{2,j}$.  
Using this, (G1), and (G5), we obtain
\begin{align}
\dfrac{i}{2}f_3(p,q,r,j)
&=
g_5(p,q,r,j)-i
\omega_{q,p,r}^{2,j}
=g_5(p,q,r,j)-g_1(p,q,r,j)\in \Gamma,  
\nonumber
\end{align}  
which shows (C3) holds. 
Finally, adding 
\eqref{eq:S3} and \eqref{eq:S4} yields
\begin{align}
if_4(r,q,p,j)+if_4(p,q,r,j)
&=
i(\omega_{r,p,q}^{4,j}+\omega_{p,r,q}^{4,j})
=
-2S_{p,q,r}^{2,j}-S_{p,q,r}^{3, j}-S_{p,q,r}^{4,j}
-i\omega_{r,q,p}^{1,j}.   
\nonumber
\end{align}
Combining this and \eqref{eq:b151} which follows from 
\eqref{eq:S1}-\eqref{eq:S2} and (G1), 
we obtain 
\begin{align}
if_4(r,q,p,j)+if_4(p,q,r,j)
&=
-g_1(p,q,r,j)-g_3(p,q,r,j)-g_4(p,q,r,j). 
\nonumber 
\end{align}
This shows (C4) holds, since  $f_4(r,q,p,j)=f_4(p,q,r,j)$ is ensured by (A2) 
and since  $g_k(p,q,r,j)\in \Gamma$ for $k=1,3,4$  
follow from 
(G1) and (G3)-(G4). 
\end{proof} 
\par 
Let $m\geqslant 4$ be an integer, and let 
$Q\in C([t_1,t_2]; H^m(\TT;\mathbb{C}^n))$ for some $t_1<t_2$.  
Set $Y={}^t(Y_1,\ldots,Y_n)=\p_xQ$.
A simple computation shows  
\begin{align}
\p_x(F_j(Q,\p_xQ, \p_x^2Q))
&=
\sum_{p,q,r=1}^{n}
\left(
\omega_{p,q,r}^{1,j} \p_x^{2}Y_p\overline{Q_q}Q_r
+
\omega_{p,q,r}^{2,j}\overline{\p_x^{2}Y_p}Q_qQ_r
\right)
\label{eq:nom1}
\\
&\quad 
+
\sum_{p,q,r=1}^{n}
\left(
\omega_{p,q,r}^{1,j} 
+\omega_{p,q,r}^{3,j} 
\right)
\p_xY_p\overline{\p_xQ_q}Q_r
\label{eq:nom2}
\\
&\quad 
+
\sum_{p,q,r=1}^{n}
\left(
\omega_{p,q,r}^{1,j} 
+
\omega_{p,r,q}^{4,j}
+
 \omega_{r,p,q}^{4,j}
\right)
\p_xY_p\overline{Q_q}\p_xQ_r
\label{eq:nom3}
\\
&\quad 
+
\sum_{p,q,r=1}^{n}
\left(
\omega_{p,q,r}^{2,j}+\omega_{p,r,q}^{2,j}
+\omega_{q,p,r}^{3,j} 
\right)
\overline{\p_xY_p}\p_xQ_qQ_r
\label{eq:nom4}
\\&\quad
+
O\left(|\p_xQ|^3+|\p_xQ||Q|^2+|\p_xQ||Q|^4\right). 
\nonumber
\end{align}
Here, we introduce 
$P_k^{\ell}(Q)$ for $k,\ell\in \left\{1,\ldots,5\right\}$ 
acting on the space of $\mathbb{C}^n$-valued functions 
on $[t_1,t_2]\times \TT$, 
which are defined by
$
P_k^{\ell}(Q)v={}^t((P_k^{\ell}(Q)v)_1,\ldots, (P_k^{\ell}(Q)v)_n)
$
for any 
$v={}^t(v_1,\ldots,v_n)(t,x): [t_1,t_2]\times \TT\to \mathbb{C}^n$, where 
\begin{align}
(P_{1}^{\ell}(Q)v)_j
&= 
-i\sum_{p,q,r=1}^n
	  	  S_{p,q,r}^{\ell,j}\left(
	  	  v_p
	  	  \overline{Q_q}
	  	 +Q_p\overline{v_q}
	  	  \right)
	  	  Q_r,	  	  	  
\label{eq:b511}
\\
(P_{2}^{\ell}(Q)v)_j
&=
2i\sum_{p,q,r=1}^n
	  S_{p,q,r}^{\ell,j}
	  Q_p
	  \overline{Q_q}
	  v_r, 
\label{eq:b512}
\\
(P_{3}^{\ell}(Q)v)_j
&=
i\sum_{p,q,r=1}^n
	  	  S_{p,q,r}^{\ell, j}\p_x(Q_p\overline{Q_q})
	  	  v_r, 
\label{eq:b513}
\\
(P_{4}^{\ell}(Q)v)_j
&=
i\sum_{p,q,r=1}^n
	  	  S_{p,q,r}^{\ell, j}\left(
	  	  \p_xQ_p\overline{Q_q}
	  	 -Q_p\overline{\p_xQ_q}
	  	  \right)
	  	  v_r, 
\label{eq:b514}
\\
(P_{5}^{\ell}(Q)v)_j
&=
-2i\sum_{p,q,r=1}^n
	  	  	  	  S_{p,q,r}^{\ell,j}
	  	  	  	  \overline{v_q}
	  	  	  	  \p_xQ_pQ_r.
\label{eq:b515}
\end{align}
\begin{proposition}
\label{proposition:pF}
Set $Y=\p_xQ$ as above. Then 
\begin{align}
\p_x(F(Q, \p_xQ, \p_x^2Q))
&
= 
P_1^1(Q)\p_x^2Y
+
\p_x\left\{
P_2^2(Q)\p_xY
\right\}
+\sum_{k=3}^{5}
P_k^k(Q)\p_xY
\nonumber
\\
&\quad
+O
\left(
|\p_xQ|^3+|\p_xQ||Q|^2+|\p_xQ||Q|^4
\right).
\label{eq:b51new}
\end{align}
\end{proposition}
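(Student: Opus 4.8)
The plan is to verify \eqref{eq:b51new} by a direct componentwise computation: I expand the right-hand side using the definitions \eqref{eq:b511}--\eqref{eq:b515} of the operators $P_k^{\ell}(Q)$ together with \eqref{eq:S1}--\eqref{eq:S5}, and compare the result, monomial by monomial, with the expansion \eqref{eq:nom1}--\eqref{eq:nom4} of $\p_x(F_j(Q,\p_xQ,\p_x^2Q))$. The point I want to stress at the outset is that this is a purely algebraic identity in the coefficients: the quantities $S_{p,q,r}^{\ell,j}$ have been defined in \eqref{eq:S1}--\eqref{eq:S5} precisely so that the matching works \emph{identically}, without invoking any of (A1)--(A2) or (B1)--(B6). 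I take \eqref{eq:nom1}--\eqref{eq:nom4}, stated just above, as the starting point.

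First I would expand the five operator terms on the right of \eqref{eq:b51new}, recalling that $Y=\p_xQ$, so $\p_xY=\p_x^2Q$ and $\p_x^2Y=\p_x^3Q$. The only term requiring care is $\p_x\{P_2^2(Q)\p_xY\}$: by the Leibniz rule it splits into a \emph{principal} piece $2i\sum S_{p,q,r}^{2,j}Q_p\overline{Q_q}\,\p_x^2Y_r$ carrying the top-order factor $\p_x^2Y$, and a \emph{subprincipal} piece $2i\sum S_{p,q,r}^{2,j}\p_x(Q_p\overline{Q_q})\,\p_xY_r$ in which the derivative falls on the coefficient and produces the first-order factors $Y_p=\p_xQ_p$ and $\overline{Y_q}=\overline{\p_xQ_q}$. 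The operators $P_3^3,P_4^4$ expand $\p_x(Q_p\overline{Q_q})$ into $Y_p\overline{Q_q}$ and $Q_p\overline{Y_q}$ in the same manner, while $P_5^5(Q)\p_xY$ produces the single type $\overline{\p_xY_q}\,Y_pQ_r$. Collecting everything, the monomials fall into exactly five types: the holomorphic and antiholomorphic top-order types $\p_x^2Y_p\overline{Q_q}Q_r$ and $\overline{\p_x^2Y_p}Q_qQ_r$ of \eqref{eq:nom1}, and the three first-order types $\p_xY_p\overline{Y_q}Q_r$, $\p_xY_p\overline{Q_q}Y_r$, $\overline{\p_xY_p}Y_qQ_r$ of \eqref{eq:nom2}--\eqref{eq:nom4}.

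For each type the matching reduces, after relabeling the dummy indices $p,q,r$, to a single identity among the coefficients. For instance, the holomorphic top-order contribution comes from the $v_p\overline{Q_q}$ part of $P_1^1(Q)\p_x^2Y$ and from the principal part of $\p_x\{P_2^2(Q)\p_xY\}$; using $S_{p,q,r}^{1,j}=i\omega_{q,p,r}^{2,j}$ and $S_{p,q,r}^{2,j}=-\tfrac{i}{2}(\omega_{r,q,p}^{1,j}-\omega_{q,r,p}^{2,j})$, their coefficients add to $\omega_{q,p,r}^{2,j}+(\omega_{p,q,r}^{1,j}-\omega_{q,p,r}^{2,j})=\omega_{p,q,r}^{1,j}$, matching \eqref{eq:nom1}. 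Likewise, the coefficient of $\p_xY_p\overline{Y_q}Q_r$ assembled from the subprincipal part of $\p_x\{P_2^2\p_xY\}$, from $P_3^3$ and from $P_4^4$ equals $2iS_{r,q,p}^{2,j}+iS_{r,q,p}^{3,j}-iS_{r,q,p}^{4,j}$, which upon substituting \eqref{eq:S2}--\eqref{eq:S4} collapses to $\omega_{p,q,r}^{1,j}+\omega_{p,q,r}^{3,j}$, exactly the coefficient in \eqref{eq:nom2}; the type $\p_xY_p\overline{Q_q}Y_r$ gives $2iS_{r,q,p}^{2,j}+iS_{r,q,p}^{3,j}+iS_{r,q,p}^{4,j}=\omega_{p,q,r}^{1,j}+\omega_{p,r,q}^{4,j}+\omega_{r,p,q}^{4,j}$ as in \eqref{eq:nom3}; and the antiholomorphic top-order type and the type of \eqref{eq:nom4} follow in the same way, from $S^{1,j}$ and from $-2iS_{q,p,r}^{5,j}=\omega_{p,q,r}^{2,j}+\omega_{p,r,q}^{2,j}+\omega_{q,p,r}^{3,j}$ respectively.

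Finally, I would account for the remainder. The operators $P_3^3,P_4^4,P_5^5$ and the subprincipal part of $\p_x\{P_2^2\p_xY\}$ generate \emph{only} the explicit first-order monomials above and no further lower-order terms, so the entire discrepancy between $\p_x(F)$ and the operator expression is precisely the term $O(|\p_xQ|^3+|\p_xQ||Q|^2+|\p_xQ||Q|^4)$ already present in \eqref{eq:nom1}--\eqref{eq:nom4} (which itself originates from differentiating the $O(|Q|^3+|Q|^5)$ tail of \eqref{eq:b21}). This yields \eqref{eq:b51new}. I expect the only genuine difficulty to be bookkeeping: keeping the three-fold products correctly aligned across the five operators through the index relabelings, and cleanly isolating the principal part of $\p_x\{P_2^2(Q)\p_xY\}$ so that it combines with $P_1^1$ to reproduce the $\omega^{1,j}$ coefficient rather than being miscounted among the first-order terms.
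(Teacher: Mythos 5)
Your proof is correct and is essentially the paper's own computation run in reverse: the paper transforms the expansion \eqref{eq:nom1}--\eqref{eq:nom4} into the operator form by the same index relabelings and by grouping terms into the symmetric/antisymmetric combinations built into \eqref{eq:S1}--\eqref{eq:S5} (with the leftover $-2i\sum S^{2,j}_{p,q,r}\p_x(Q_p\overline{Q_q})\p_xY_r$ from isolating the principal part of $\p_x\{P_2^2(Q)\p_xY\}$ absorbed into $P_3^3$ through the $-2S^{2,j}_{p,q,r}$ term in \eqref{eq:S3}), whereas you expand the operator side and verify the identical coefficient identities, e.g. $2iS^{2,j}_{r,q,p}+iS^{3,j}_{r,q,p}\mp iS^{4,j}_{r,q,p}$ collapsing to the coefficients in \eqref{eq:nom2}--\eqref{eq:nom3}. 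Your checks are accurate, and your observation that the identity is purely algebraic and uses none of \textup{(A1)--(A2)} or \textup{(B1)--(B6)} agrees with the remark the paper makes immediately after its proof.
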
 
\begin{proof}[Proof of Proposition~\ref{proposition:pF}] 
First, 
\begin{align}
\eqref{eq:nom1}
&=
\sum_{p,q,r=1}^{n}
\omega_{p,q,r}^{1,j} \p_x^{2}Y_p\overline{Q_q}Q_r
+
\sum_{p,q,r=1}^{n}
\omega_{q,p,r}^{2,j}Q_p\overline{\p_x^{2}Y_q}Q_r
\nonumber 
\\
&=
\sum_{p,q,r=1}^{n}
\omega_{q,p,r}^{2,j}
\left(
\p_x^{2}Y_p\overline{Q_q}+Q_p\overline{\p_x^{2}Y_q}
\right)Q_r
+
\sum_{p,q,r=1}^{n}
\left(\omega_{p,q,r}^{1,j} -\omega_{q,p,r}^{2,j}\right)
\p_x^{2}Y_p\overline{Q_q}Q_r
\nonumber 
\\
&=
\sum_{p,q,r=1}^{n}
\omega_{q,p,r}^{2,j}
\left(
\p_x^{2}Y_p\overline{Q_q}+Q_p\overline{\p_x^{2}Y_q}
\right)Q_r
+
\sum_{p,q,r=1}^{n}
\left(\omega_{r,q,p}^{1,j} -\omega_{q,r,p}^{2,j}\right)
Q_p\overline{Q_q}\p_x^{2}Y_r. 
\nonumber 
\end{align}
Here, the first (resp. third) equality holds by replacing $p$ and $q$ 
(resp. $p$ and $r$)
in the second summation term of the right hand side.   
Further, by \eqref{eq:S1} and \eqref{eq:S2},   
\begin{align}
\eqref{eq:nom1}
&=
-i
\sum_{p,q,r=1}^{n}
S_{p,q,r}^{1,j}
\left(
\p_x^{2}Y_p\overline{Q_q}+Q_p\overline{\p_x^{2}Y_q}
\right)Q_r
+2i 
\sum_{p,q,r=1}^{n}
S_{p,q,r}^{2,j}
Q_p\overline{Q_q}\p_x^{2}Y_r 
\nonumber
\\
&=
(P_1^1(Q)\p_x^2Y)_j
+(\p_x\left\{
P_2^2(Q)\p_xY
\right\})_j
-2i\sum_{p,q,r=1}^{n}
S_{p,q,r}^{2,j}
\p_x(Q_p\overline{Q_q})\p_xY_r. 
\nonumber
\end{align}
Second, replacing $p$ and $r$ in \eqref{eq:nom2} and 
\eqref{eq:nom3}, 
we deduce 
\begin{align}
\eqref{eq:nom2}+\eqref{eq:nom3}
&=
\sum_{p,q,r=1}^{n}
\dfrac{1}{2}
\left(
2\omega_{r,q,p}^{1,j} 
+\omega_{r,q,p}^{3,j}
+
\omega_{r,p,q}^{4,j}
+
 \omega_{p,r,q}^{4,j}
\right)
\p_x(Q_p\overline{Q_q})\p_xY_r
\nonumber
\\
&\quad 
+
\sum_{p,q,r=1}^{n}
\dfrac{1}{2}
\left(
-\omega_{r,q,p}^{3,j} 
+
\omega_{r,p,q}^{4,j}
+
 \omega_{p,r,q}^{4,j}
\right)
\left(
\p_xQ_p\overline{Q_q}-Q_p\overline{\p_xQ_q}
\right)
\p_xY_r.
\nonumber
\end{align}
Combining them and using \eqref{eq:S3} and \eqref{eq:S4}, 
we have 
\begin{align}
&\eqref{eq:nom1}+\eqref{eq:nom2}+\eqref{eq:nom3}
\nonumber
\\
&=
(P_1^1(Q)\p_x^2Y)_j
+(\p_x\left\{
P_2^2(Q)\p_xY
\right\})_j
+\left(\sum_{k=3}^4 P_k^k(Q)\p_xY\right)_j.
\label{eq:9182}
\end{align}
Third, replacing $p$ and $q$, and using \eqref{eq:S5}, we deduce 
\begin{align}
\eqref{eq:nom4}
&=
\sum_{p,q,r=1}^{n}
\left(
\omega_{q,p,r}^{2,j}+\omega_{q,r,p}^{2,j}
+\omega_{p,q,r}^{3,j} 
\right)
\overline{\p_xY_q}\p_xQ_pQ_r
=
(P_5^5(Q)\p_xY)_j.
\label{eq:9183}
\end{align}
Combing \eqref{eq:9182} and \eqref{eq:9183} shows 
\eqref{eq:b51new} holds for each $j$-th component. This completes the proof.
\end{proof}
We note that none of the conditions (B1)-(B6) have been used to show 
Proposition~\ref{proposition:pF}.
In fact,  (B1)-(B6) under (A1)-(A2) 
ensure the following symmetric or 
skew-symmetric properties of $P_k^{\ell}(Q)$ for $k=2,4,5$ and 
$\ell\in \left\{1,\ldots,5\right\}$. 
\begin{proposition}
\label{proposition:symskewsym}
Suppose that the conditions \textup{(A1)-(A2)} and \textup{(B1)-(B6)} are satisfied. 
Then for any $\ell\in \left\{1,\ldots,5\right\}$,  
\begin{align}
\operatorname{Re}[P_2^{\ell}(Q)v\cdot w]
&=
-\operatorname{Re}[P_2^{\ell}(Q)w\cdot v], 
\label{eq:491}
\\
\operatorname{Re}
[P_4^{\ell}(Q)v\cdot w]
&=
\operatorname{Re}[P_4^{\ell}(Q)w\cdot v],
\label{eq:481}
\\
P_5^{\ell}(Q)v\cdot w
&=
P_5^{\ell}(Q)w\cdot v
\label{eq:391}
\end{align}
for any $v={}^t(v_1,\ldots,v_n)(t,x), w={}^t(w_1,\ldots,w_n)(t,x):[t_1,t_2]\times \TT \to \mathbb{C}^n$.
\end{proposition}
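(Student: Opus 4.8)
The plan is to reduce the three identities to purely combinatorial symmetries of the coefficient array $S_{p,q,r}^{\ell,j}$ and then to unwind each inner product termwise. First I would note that, by Propositions~\ref{proposition:BC} and \ref{proposition:GC}, the hypotheses (A1)-(A2) and (B1)-(B6) are equivalent to (G1)-(G5), i.e.\ $g_{\ell}(p,q,r,j)=S_{p,q,r}^{\ell,j}\in\Gamma=\Gamma_1\cap\Gamma_2$ for every $\ell\in\{1,\ldots,5\}$. Membership in $\Gamma_1$ and $\Gamma_2$ supplies the two generating symmetries
\[
S_{p,q,r}^{\ell,j}=S_{r,q,p}^{\ell,j},\qquad \overline{S_{p,q,r}^{\ell,j}}=S_{j,r,q}^{\ell,p}
\]
for all indices. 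Composing these, I would record the two derived identities
\[
S_{p,q,r}^{\ell,j}=S_{p,j,r}^{\ell,q},\qquad S_{r,j,p}^{\ell,q}=S_{p,q,r}^{\ell,j},
\]
the first obtained by applying the conjugation symmetry, then the $\Gamma_1$-swap of the first and third lower indices, then the conjugation symmetry once more (so the two conjugations cancel), and the second by combining the first with a further $\Gamma_1$-swap. Conceptually, $S$ is invariant \emph{without} conjugation under the Klein four-group generated by the transposition of the first and third lower indices and the transposition of the second lower index with the upper index $j$, while the remaining permutations act with complex conjugation.

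For \eqref{eq:491} and \eqref{eq:481} I would avoid computing real parts directly and instead establish the cleaner complex identities
\[
\overline{P_2^{\ell}(Q)v\cdot w}=-\,P_2^{\ell}(Q)w\cdot v,\qquad \overline{P_4^{\ell}(Q)v\cdot w}=P_4^{\ell}(Q)w\cdot v,
\]
from which \eqref{eq:491} and \eqref{eq:481} follow at once on taking real parts, since $\operatorname{Re}\overline{z}=\operatorname{Re}z$. Using $z\cdot w=\sum_j z_j\overline{w_j}$ together with \eqref{eq:b512} and \eqref{eq:b514}, I would conjugate $P_2^{\ell}(Q)v\cdot w$ and $P_4^{\ell}(Q)v\cdot w$, insert the conjugation symmetry $\overline{S_{p,q,r}^{\ell,j}}=S_{j,r,q}^{\ell,p}$, relabel the summation indices by $p\leftrightarrow q$ and $r\leftrightarrow j$, and finally apply the derived identity $S_{r,j,p}^{\ell,q}=S_{p,q,r}^{\ell,j}$ to recover $\mp P_k^{\ell}(Q)w\cdot v$. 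The opposite overall signs for $k=2$ and $k=4$ come entirely from the behaviour of the quadratic factors under conjugation and the swap $p\leftrightarrow q$: the factor $Q_p\overline{Q_q}$ in \eqref{eq:b512} is invariant, so the sign is that of $\overline{2i}=-2i$, giving $-P_2^{\ell}(Q)w\cdot v$; the factor $\p_xQ_p\overline{Q_q}-Q_p\overline{\p_xQ_q}$ in \eqref{eq:b514} instead changes sign, and this extra minus cancels the one from $\overline{i}=-i$, yielding $+P_4^{\ell}(Q)w\cdot v$.

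Finally, \eqref{eq:391} is a genuine unconjugated identity, so here I would argue without complex conjugation. Expanding $P_5^{\ell}(Q)v\cdot w$ via \eqref{eq:b515} and the inner product convention, both $v$ and $w$ enter conjugated, so $P_5^{\ell}(Q)v\cdot w=-2i\sum S_{p,q,r}^{\ell,j}\,\overline{v_q}\,\overline{w_j}\,\p_xQ_p\,Q_r$. Swapping the dummy indices $q\leftrightarrow j$ in $P_5^{\ell}(Q)w\cdot v$ and applying the derived symmetry $S_{p,q,r}^{\ell,j}=S_{p,j,r}^{\ell,q}$ matches the two sums termwise, giving \eqref{eq:391}. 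I expect the main obstacle to be the index bookkeeping: pinning down exactly the Klein-four symmetry group of $S$ from the two generators (and tracking which compositions carry a complex conjugation), and then carrying out the dummy-index relabelings in the inner products without transposition or sign errors — in particular getting the $P_4$ sign right through the antisymmetry of $\p_xQ_p\overline{Q_q}-Q_p\overline{\p_xQ_q}$. Once the symmetries are in hand, each of the three identities reduces to a short termwise comparison.
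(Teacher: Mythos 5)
Your proposal is correct and takes essentially the same route as the paper: both reduce the hypotheses to (G1)--(G5) via Propositions~\ref{proposition:BC} and \ref{proposition:GC}, extract the $\Gamma_1$- and $\Gamma_2$-symmetries \eqref{eq:tsu3s}--\eqref{eq:R34as}, derive the same composite identities (your $S_{p,q,r}^{\ell,j}=S_{p,j,r}^{\ell,q}$ is the paper's \eqref{eq:a05190}, and your unconjugated companion is equivalent to \eqref{eq:05190}), and then verify the three identities by dummy-index relabeling. The only cosmetic difference is packaging: the paper states the conclusion as skew-Hermitian, Hermitian, and symmetric properties of the matrices $\left(\alpha_{jk}\right)$, $\left(\beta_{jk}\right)$, $\left(\gamma_{jk}\right)$, which is literally equivalent to your termwise complex identities $\overline{P_2^{\ell}(Q)v\cdot w}=-P_2^{\ell}(Q)w\cdot v$ and $\overline{P_4^{\ell}(Q)v\cdot w}=P_4^{\ell}(Q)w\cdot v$.
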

\begin{proof}[Proof of Proposition~\ref{proposition:symskewsym}] 
By Propositions~\ref{proposition:BC} and \ref{proposition:GC}, 
(B1)-(B6) and (G1)-(G5) are equivalent under 
(A1)-(A2).  
Hence $g_{\ell}(p,q,r,j)\in \Gamma_1\cap \Gamma_2$ for 
$\ell \in \left\{1,\ldots,5\right\}$, that is, 
\begin{align}
S_{p,q,r}^{\ell,j}
&=
S_{r,q,p}^{\ell,j}
\quad 
\text{for any $p,q,r,j\in\left\{1,\ldots,n\right\}$ and $\ell\in \left\{1,\ldots,5\right\}$,}
\label{eq:tsu3s}
\\
S_{p,q,r}^{\ell,j}
&=
\overline{S_{j,r,q}^{\ell,p}}
\quad
\text{for any $p,q,r,j\in\left\{1,\ldots,n\right\}$ and $\ell\in \left\{1,\ldots,5\right\}$.}
\label{eq:R34as}
\end{align}
By \eqref{eq:tsu3s} and \eqref{eq:R34as}, we have  
$
\overline{S_{q,p,j}^{\ell, r}}=\overline{S_{j,p,q}^{\ell, r}}=S_{r,q,p}^{\ell,j}=S_{p,q,r}^{\ell,j} 
$ and 
$S_{p,j,r}^{\ell,q}=\overline{S_{q,r,j}^{\ell,p}}
=\overline{S_{j,r,q}^{\ell,p}}=S_{p,q,r}^{\ell,j}$, 
and thus 
\begin{align}
S_{p,q,r}^{\ell ,j}
&=\overline{S_{q,p,j}^{\ell, r}}
\quad
\text{for any $p,q,r,j\in\left\{1,\ldots,n\right\}$ and $\ell\in \left\{1,\ldots,5\right\}$,}
\label{eq:05190} 
\\
S_{p,q,r}^{\ell ,j}
&=S_{p,j,r}^{\ell, q}
\quad
\text{for any $p,q,r,j\in\left\{1,\ldots,n\right\}$ and $\ell\in \left\{1,\ldots,5\right\}$.}
\label{eq:a05190} 
\end{align}
The desired \eqref{eq:491}-\eqref{eq:391} can be obtained 
by using \eqref{eq:05190}-\eqref{eq:a05190}. 
To see this, 
we rewrite as follows: 
$$
P_2^{\ell}(Q)v=\left(\alpha_{jk}\right)v, \quad
P_4^{\ell}(Q)v=\left(\beta_{jk}\right)v, \quad
P_5^{\ell}(Q)v=\left(\gamma_{jk}\right)\overline{v},  
$$
where 
$\left(\alpha_{jk}\right)$, $\left(\beta_{jk}\right)$ and 
$\left(\gamma_{jk}\right)$ 
denote $n\times n$ complex-matrix-valued functions 
whose $(j,k)$-components are respectively 
$\alpha_{jk}$, $\beta_{jk}$ and $\gamma_{jk}$. 
By definition, it is easy to see 
$$
\alpha_{jk}=2i\sum_{p,q=1}^nS_{p,q,k}^{\ell,j}Q_p\overline{Q_q}, 
\quad
\beta_{jk}=i\sum_{p,q=1}^nS_{p,q,k}^{\ell,j}
(\p_xQ_p\overline{Q_q}-Q_p\overline{\p_xQ_q}), 
$$ 
$$
\gamma_{jk}
=-2i\sum_{p,r=1}^n
S_{p,k,r}^{\ell,j}\p_xQ_pQ_r.
$$
Here, by replacing $p$ and $q$,  and by \eqref{eq:05190}, we see
\begin{align}
\overline{\alpha_{kj}}
&=
-2i\sum_{p,q=1}^n\overline{S_{q,p,j}^{\ell,k}}\overline{Q_q} Q_p
=
-2i\sum_{p,q=1}^nS_{p,q,k}^{\ell,j}Q_p\overline{Q_q}
=
-\alpha_{jk}, 
\nonumber
\end{align}
and thus $(\alpha_{jk})^{*}=-\left(\alpha_{jk}\right)$. 
This shows \eqref{eq:491}. 
In the same way as above, by
replacing  $p$ and $q$, and by \eqref{eq:05190},  
we see $(\beta_{jk})^{*}=\left(\beta_{jk}\right)$, 
which shows \eqref{eq:481}. 
On the other hand, by \eqref{eq:a05190}, we have 
$^{t}\left(\gamma_{jk}\right)=\left(\gamma_{jk}\right)$, 
which shows \eqref{eq:391} 
\end{proof}
\begin{remark}
\label{remark:cond929} 
The above proof shows  
each of the properties \eqref{eq:491}-\eqref{eq:391} 
is derived both from $g_{\ell}(p,q,r,j)\in \Gamma_1$ 
and  $g_{\ell}(p,q,r,j)\in\Gamma_2$. 
In other words, applying them for $\ell\in \left\{1,\ldots,5\right\}$ 
is ensured by the assumption 
\textup{(G\textup{$\ell$})} for the corresponding $\ell$. 
\end{remark}
 \section{Uniform estimate for Bona-Smith regularized solutions 
 in $L^{\infty}H^m$.}
  \label{section:local} 
This section aims to obtain uniform estimates for solutions to 
an initial value problem regularized by the 
Bona-Smith approximation.   
Throughout this section, $m$ is supposed be an integer satisfying $m\geqslant 4$.
\par 
To begin with, 
following mainly \cite{BS,ET,II,Mietka,segata},  
we recall the setting of the Bona-smith approximation:
Let $\phi:\RR\to\RR$  be a Schwartz function 
satisfying $0\leqslant \phi(x)\leqslant 1$ on $\RR$ and  
$\phi(x)=1$ on a neighborhood of the origin $x=0$ so that 
$\p_x^k\phi(0)=0$ for all positive integers $k$.
For any $Q_0={}^t(Q_{01},\ldots,Q_{0n})\in H^m(\TT;\mathbb{C}^n)$ and $\ep\in (0,1)$, 
define 
$Q_0^{\ep}: \TT\to \mathbb{C}^n$ by 
$$
Q_0^{\ep}(x)
=
\dfrac{1}{\sqrt{2\pi}}
\sum_{k\in \mathbb{Z}}
\phi(\ep k)
\widehat{Q_0}(k)
e^{ikx} 
\quad
(x\in \TT), 
$$ 
where 
$\widehat{Q_0}={}^t(\widehat{Q_{01}}, \ldots, \widehat{Q_{0n}}):\mathbb{Z}\to \mathbb{C}^n$ 
is defined by 
$$
\widehat{Q_{0j}}(k)
=\dfrac{1}{\sqrt{2\pi}}
\int_{0}^{2\pi}
Q_{0j}(x)e^{-ikx}\,dx
\quad 
(k\in \mathbb{Z}, j\in \left\{1,\ldots,n\right\}).
$$
It follows that $Q_0^{\ep}\in H^{\infty}(\TT;\mathbb{C}^n)$ and 
$Q_0^{\ep}\to Q_0$ in $H^m(\TT;\mathbb{C}^n)$ 
as $\ep \downarrow 0$. 
Moreover, 
\begin{align}
&\|Q_0^{\ep}\|_{H^m}
\leqslant 
\|Q_0\|_{H^m}, 
\label{eq:bs1}
\\
&\|Q_0^{\ep}\|_{H^{m+\ell}}
\leqslant 
C\ep^{-\ell}
\|Q_0\|_{H^m}
\quad 
(\ell=0,1,2,\ldots),
\label{eq:6092}
\\
&\|Q_0^{\ep}-Q_0\|_{H^{m-\ell}}
\leqslant 
C\ep^{\ell}
\|Q_0\|_{H^m}
\quad 
(\ell=0,1,2,\ldots), 
\label{eq:6093}
\end{align} 
where $C$ is a positive 
constant which depends on $m,\ell,\phi$, 
but not on $\ep$. 
The set $\left\{Q_{0}^{\ep}\right\}_{\ep\in (0,1)}$ is called a 
Bona-Smith approximation of $Q_0$.   
\par 
For $\ep\in (0,1)$ and $Q_0\in H^m(\TT;\mathbb{C}^n)$,  
we consider 
the fourth-order parabolic regularized problem:   
\begin{alignat}{2}
 \left(
 \p_t+\ep^5 \p_x^4-iM_a\p_x^4-iM_{\lambda}\p_x^2
 \right)
 Q
  &=F(Q, \p_xQ, \p_x^2Q), 
 \label{eq:bpde}
 \\
  Q(0,x)
   &=
   Q_{0}^{\ep}(x) 
 \label{eq:bdata}
 \end{alignat}
for $Q={}^t(Q_1,\ldots,Q_n):[0,\infty)\times \TT\to \mathbb{C}^n$, 
where $ Q_{0}^{\ep}\in H^{\infty}(\TT;\mathbb{C}^n)$ is given 
by the Bona-Smith approximation of $Q_0$. 
It is not difficult to show there exists  
$T_{\ep}=T(\ep, \|Q_0^{\ep}\|_{H^m})>0$ 
and a unique $Q^{\ep}={}^t(Q^{\ep}_1,\ldots,Q^{\ep}_n)\in C([0,T_{\ep}];H^{\infty}(\TT;\mathbb{C}^n))$ 
solving \eqref{eq:bpde}-\eqref{eq:bdata} 
by the standard contraction mapping argument. 
We omit the detail. 
\par 
The goal of this section is to show the following:
  \begin{proposition}
  \label{proposition:preloc}
  Under the same assumptions as in Theorem~\ref{theorem:lwp}, 
  there exists 
  $T=T(\|Q_0\|_{H^4})>0$ 
  which is independent of $\ep\in (0,1)$ 
  such that $\left\{Q^{\ep}\right\}_{\ep\in (0,1)}$ is bounded in  $L^{\infty}(0,T;H^m(\TT;\mathbb{C}^n))$.
  \end{proposition}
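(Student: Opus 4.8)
\noindent
The plan is to run a gauge-corrected energy estimate on the top derivative $W:=\partial_x^mQ^{\ep}=\partial_x^{m-1}Y$ (with $Y=\p_xQ^{\ep}$), using the dissipation coming from the $\ep^5\p_x^4$ term and the structural identities of Section~\ref{section:condition}. First I would differentiate \eqref{eq:bpde} and apply Proposition~\ref{proposition:pF} together with $m-1$ further differentiations to put the evolution of $W$ in the form \eqref{eq:b384}, schematically
\begin{align*}
\p_tW
&=iM_a\p_x^4W-\ep^5\p_x^4W+iM_{\lambda}\p_x^2W
+P_1^1(Q^{\ep})\p_x^2W+\p_x\{P_2^2(Q^{\ep})\p_xW\}
\\
&\quad
+\sum_{k=3}^{5}P_k^k(Q^{\ep})\p_xW+R,
\end{align*}
where $R$ is bounded in $L^2$ by $C(\|Q^{\ep}\|_{H^4})\|Q^{\ep}\|_{H^m}$. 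Because $M_a=aI_n$ is a real scalar, $2\Re\langle iM_a\p_x^4W,W\rangle=2\Re[ia\|\p_x^2W\|_{L^2}^2]=0$, and likewise $2\Re\langle iM_{\lambda}\p_x^2W,W\rangle=0$, while the regularizing term gives $2\Re\langle-\ep^5\p_x^4W,W\rangle=-2\ep^5\|\p_x^2W\|_{L^2}^2\leqslant0$. This last quantity will simply be discarded, and it is precisely this sign that will make the resulting existence time and the bound independent of $\ep$.

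\noindent
The core is the two derivative-losing groups of terms. For $k=2,4,5$, Proposition~\ref{proposition:symskewsym} makes $P_k^{\ell}(Q^{\ep})$ (skew-)symmetric with respect to $\Re[\,\cdot\,]$, so after one integration by parts the leading contributions of $\p_x\{P_2^2\p_xW\}$, $P_4^4\p_xW$ and $P_5^5\p_xW$ either vanish or collapse to expressions controllable by $C(\|Q^{\ep}\|_{H^4})\|W\|_{L^2}^2$. The operators $P_1^1$ and $P_3^3$ carry no such symmetry. To treat them I would introduce the gauge transformation $I+\Lambda(Q^{\ep})$, with $\Lambda(Q^{\ep})$ an operator of order $-2$ built from the coefficients of $P_1^1,P_3^3$, and estimate the corrected energy $\langle(I+\Lambda(Q^{\ep}))W,(I+\Lambda(Q^{\ep}))W\rangle$ (assembled with the analogous lower-order pieces into $N_m(t)\simeq\|Q^{\ep}(t)\|_{H^m}^2$).

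\noindent
Here the point is that the Hermitian/symmetry structure of the coefficients $S_{p,q,r}^{\ell,j}$ recorded in \eqref{eq:05190}-\eqref{eq:a05190} (valid under (B1)-(B6) by Propositions~\ref{proposition:BC}-\ref{proposition:GC}) already kills the \emph{principal}, loss-of-two-derivatives part of $2\Re\langle P_1^1(Q^{\ep})\p_x^2W,W\rangle$, because the relevant coefficient matrix is Hermitian; what survives is a loss of at most one derivative, which, together with the loss-of-one-derivative contribution of $P_3^3\p_xW$, is exactly cancelled by the first-order commutator $[iM_a\p_x^4,\Lambda(Q^{\ep})]$ arising from the gauge. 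The identity \eqref{eq:comm930}, guaranteed by $M_a=aI_n$, is essential at this step so that the matrix structure of $\Lambda(Q^{\ep})$ generates no new loss.

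\noindent
Carrying this through, I expect a differential inequality $\frac{d}{dt}N_m(t)\leqslant C(\|Q^{\ep}(t)\|_{H^4})\,N_m(t)$ with $C$ independent of $\ep$, the $H^4$ coefficient arising from bounding all non-top-order products by Sobolev embedding and Gagliardo--Nirenberg (this is where $m\geqslant4$ enters, ensuring $\p_x^2Q^{\ep},\p_x^3Q^{\ep}\in L^{\infty}$). Taking $m=4$ gives $\frac{d}{dt}N_4\leqslant C(\sqrt{N_4})\,N_4$, so a standard continuity argument starting from $N_4(0)\leqslant C\|Q_0\|_{H^4}^2$ (using \eqref{eq:bs1}) produces $T=T(\|Q_0\|_{H^4})>0$, uniform in $\ep$, on which $\|Q^{\ep}(t)\|_{H^4}$ stays bounded; feeding this back into the inequality for general $m$ and integrating yields the claimed uniform $L^{\infty}(0,T;H^m)$ bound. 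The main obstacle is the construction and bookkeeping of $\Lambda(Q^{\ep})$: one must verify both that $I+\Lambda(Q^{\ep})$ is invertible, so that $N_m\simeq\|Q^{\ep}\|_{H^m}^2$ with constants depending only on $\|Q^{\ep}\|_{H^4}$, and that its commutator with the principal part cancels precisely the residual terms from $P_1^1$ and $P_3^3$, with all constants staying bounded as $\ep\downarrow0$.
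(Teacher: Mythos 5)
Your overall strategy is the paper's: Bona--Smith regularization, $m$-fold differentiation of \eqref{eq:bpde} into the form \eqref{eq:b384}, the (skew-)symmetries of Proposition~\ref{proposition:symskewsym} to neutralize the $P_2$, $P_4$, $P_5$ contributions, a gauge of the form ``identity plus order $-2$'' (the paper's $V^{\ep}=\p_x^mQ^{\ep}+M_a^{-1}\Lambda(Q^{\ep})\p_x^{m-2}Q^{\ep}$ in \eqref{eq:V_j}) whose commutator with $iM_a\p_x^4$ removes the derivative-losing $P_1^1$- and $P_3$-type terms, the role of $M_a=aI_n$ via \eqref{eq:comm930}, and the $H^4$ bootstrap \eqref{eq:cuttime} plus Gronwall yielding $T=T(\|Q_0\|_{H^4})$ in \eqref{eq:extime}. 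However, two steps are stated in a way that would fail. First, the dissipative term cannot ``simply be discarded.'' The gauge does not commute with the regularizing term either: the piece $\ep^5M_a^{-1}\left[\p_x^4,\Lambda(Q^{\ep})\p_x^{-2}\right]U^{\ep}$ in \eqref{eq:c9232}, \eqref{eq:ep9231} produces $\ep^5$-weighted terms of the form $\ep^5O\left(|Q^{\ep}||\p_x^2V^{\ep}|+|\p_xQ^{\ep}||Q^{\ep}||\p_xV^{\ep}|\right)$, which carry one or two derivatives on $V^{\ep}$ and have no compensating symmetry. Uniformly in $\ep$ these can only be absorbed by \emph{retaining} a portion $\tfrac{\ep^5}{4}\|\p_x^2V^{\ep}\|_{L^2}^2$ of the dissipation and applying Young's inequality, exactly as in \eqref{eq:9246}; if you throw the dissipation away at the start, these commutator remainders are not boundable by $C\,\mathcal{E}_m(Q^{\ep})^2$ and the Gronwall inequality does not close.

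Second, your description of the cancellation mechanism for $P_1^1$ is internally inconsistent. You are right that, under \eqref{eq:05190}--\eqref{eq:a05190}, the linear part of $P_1^1(Q^{\ep})$ is skew-Hermitian, so after one integration by parts the surviving loss in $\Re\langle P_1^1(Q^{\ep})\p_x^2V^{\ep},V^{\ep}\rangle$ is a conjugate-quadratic form in $\p_xV^{\ep}$ coming from the conjugate-linear part of $P_1^1$. But a commutator $[iM_a\p_x^4,\Lambda(Q^{\ep})\p_x^{-2}]$ that is genuinely \emph{first-order} --- i.e.\ one arising from a gauge with $[iI_n,\Lambda(Q^{\ep})]=0$ --- produces only terms linear (not conjugate-linear) in $\p_xV^{\ep}$ paired against $V^{\ep}$, and can never cancel that residue. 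The paper's point is precisely that $\Lambda_1(Q^{\ep})$ contains conjugations, so $[iI_n,\Lambda_1(Q^{\ep})]=-P_1^1(Q^{\ep})-\tfrac12P_2^1(Q^{\ep})\neq 0$ by \eqref{eq:add10284}, and the commutator therefore contains a \emph{second-order} piece that cancels $P_1^1(Q^{\ep})\p_x^2U^{\ep}$ in full at the level of the equation (Proposition~\ref{proposition:comm_loc}, \eqref{eq:b386}), with the choice $(e_1,e_2,e_3)=(-1,1,-1)$ also removing $P_3^1$ and $P_{3,m}$ (whose coefficients, note, carry $(m-1)$-dependent pieces from the $m-1$ extra differentiations, reflected in $\Lambda_3$ via \eqref{eq:b383}). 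Your ``symmetry first, then first-order commutator'' bookkeeping can be reorganized into the paper's identity, but only once $\Lambda$ is given the conjugate-linear component and the resulting second-order commutator term is tracked; as written, the plan under-specifies the gauge and the cancellation would not occur.
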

\begin{proof}[Proof of Proposition~\ref{proposition:preloc}]
By Propositions~\ref{proposition:BC} and \ref{proposition:GC}, 
it suffices to show it by assuming (A1)-(A2) and (G1)-(G5).
\par 
For fixed $\ep\in (0,1)$, 
we set $U^{\ep}={}^t(U_1^{\ep},\ldots,U_n^{\ep})=\p_x^mQ^{\ep}$. 
Taking the $(m-1)$-st $x$-derivative 
of 
\eqref{eq:b51new} with $Q$ replaced by $Q^{\ep}$
yields
\begin{align}
&\p_x^m(F(Q^{\ep}, \p_xQ^{\ep}, \p_x^2Q^{\ep}))
=
P_1^1(Q^{\ep})\p_x^2U^{\ep}
+(m-1)\p_x(P_1^1(Q^{\ep}))\p_xU^{\ep}
\nonumber
\\
&\quad\phantom{\p_x^m(F(Q^{\ep}, \p_xQ^{\ep}, \p_x^2Q^{\ep}))}
+\p_x\left\{
P_2^2(Q^{\ep})\p_xU^{\ep}
\right\}
+(m-1)\p_x(P_2^2(Q^{\ep}))\p_xU^{\ep}
\nonumber
\\
&\quad \phantom{\p_x^m(F(Q^{\ep}, \p_xQ^{\ep}, \p_x^2Q^{\ep}))}
+\sum_{k=3}^{5}
P_k^k(Q^{\ep})\p_xU^{\ep}
+\mathcal{R}_{\leqslant m}(Q^{\ep}), 
\nonumber
\\
&\|\mathcal{R}_{\leqslant m}(Q^{\ep})(t)\|_{L^2}
\leqslant 
C(\|Q^{\ep}(t)\|_{H^3})
\|Q^{\ep}(t)\|_{H^m}, 
\label{eq:10161}
\end{align} 
where $\p_x(P_k^{\ell}(Q^{\ep})):=[\p_x, P_k^{\ell}(Q^{\ep})]$ 
and $\left[\cdot,\cdot\right]$ denotes the commutator of two 
operators.  
In particular, we have 
\begin{align}
\p_x(P_1^{\ell}(Q^{\ep}))
&=-P_3^{\ell}(Q^{\ep})+P_5^{\ell}(Q^{\ep}), 
\label{eq:53011}
\\
\p_x(P_2^{\ell}(Q^{\ep}))
&=2P_3^{\ell}(Q^{\ep}) 
\label{eq:53012}  
\end{align} 
for $\ell\in \left\{1,\ldots,5\right\}$.  
Indeed, by the definition of $P_1^{\ell}(Q^{\ep})$, 
\begin{align}
(\p_x(P_1^{\ell}(Q^{\ep}))v)_j
&=
-i\sum_{p,q,r=1}^n
	  	  	  	  S_{p,q,r}^{\ell,j}
\p_x(Q^{\ep}_r\overline{Q^{\ep}_q})v_p
-i\sum_{p,q,r=1}^n
S_{p,q,r}^{\ell,j}
\overline{v_q}(\p_xQ^{\ep}_pQ^{\ep}_r
+Q^{\ep}_p\p_xQ^{\ep}_r).  	 
\nonumber	  	 	  	  
\end{align}
Replacing $p$ and $r$, 
and using \eqref{eq:tsu3s},  
we see 
the first summation term of the right hand side 
is $-(P_3^{\ell}(Q^{\ep})v)_j$ and the second one 
is $(P_5^{\ell}(Q^{\ep})v)_j$, which shows \eqref{eq:53011}.  
On the other hand, \eqref{eq:53012} is immediately follows only 
from the definition of $P_2^{\ell}(Q^{\ep})$ 
and $P_3^{\ell}(Q^{\ep})$. 
Using \eqref{eq:53011} for $\ell=1$ and \eqref{eq:53012} 
for $\ell=2$,   
we obtain 
\begin{align}
\p_x^m(F(Q^{\ep}, \p_xQ^{\ep}, \p_x^2Q^{\ep}))
&=
P_1^1(Q^{\ep})\p_x^2U^{\ep}
+\p_x\left\{
P_2^2(Q^{\ep})\p_xU^{\ep}
\right\}
+
P_{3,m}(Q^{\ep})\p_xU^{\ep}
\nonumber
\\&\quad
+
P_4^4(Q^{\ep})\p_xU^{\ep}
+
P_{5,m}(Q^{\ep})\p_xU^{\ep}
+\mathcal{R}_{\leqslant m}(Q^{\ep}), 
\label{eq:b51}
\end{align}
where 
\begin{align}
P_{3,m}(Q^{\ep})
&=
P_3^3(Q^{\ep})-(m-1)P_3^1(Q^{\ep})+2(m-1)P_3^2(Q^{\ep}), 
\nonumber
\\
P_{5,m}(Q^{\ep})
&=
P_5^5(Q^{\ep})+(m-1)P_5^1(Q^{\ep}).
\nonumber
\end{align}
Therefore, taking the $m$-th $x$-derivative of 
both hand sides of \eqref{eq:bpde} yields 
\begin{align}
\p_tU^{\ep}
&=
P^{\ep}(Q^{\ep})U^{\ep}
+
\mathcal{R}_{\leqslant m}(Q^{\ep}), 
\label{eq:b384}
\end{align}
where 
\begin{align}
P^{\ep}(Q^{\ep})
&=
-\ep^5\,\p_x^4+iM_a\p_x^4+iM_{\lambda}\p_x^2
+P_1^1(Q^{\ep})\p_x^2
+
\p_x\left\{
P_2^2(Q^{\ep})\p_x
\right\}
\nonumber
\\
&\quad
+P_{3,m}(Q^{\ep})\p_x
+P_4^4(Q^{\ep})\p_x
+P_{5,m}(Q^{\ep})\p_x.
\label{eq:loc1}
\end{align}
\par 
We next define  
$\mathcal{E}_m(Q^{\ep})=\mathcal{E}_m(Q^{\ep}(t)):[0,T_{\ep}]\to [0,\infty)$ to satisfy  
\begin{align}
\mathcal{E}_m(Q^{\ep}(t))^2
&=
\|V^{\ep}(t)\|_{L^2}^2+\|Q^{\ep}(t)\|_{H^{m-1}}^2.
\label{eq:Nml}
\end{align}
Here, $V^{\ep}={}^t(V^{\ep}_1,\ldots,V^{\ep}_n): [0,T_{\ep}]\times \TT \to \mathbb{C}^n$ is defined by
\begin{align}
 &V^{\ep}
 :=\p_x^mQ^{\ep}_j
 +M_a^{-1}\Lambda(Q^{\ep})\p_x^{m-2}Q^{\ep}, 
 \label{eq:V_j}
\end{align}
where $M_a^{-1}$ denotes the inverse matrix of $M_a$, 
\begin{align}
\Lambda(Q^{\ep}) 
&=e_1\Lambda_1(Q^{\ep})+e_2\Lambda_2(Q^{\ep})
+e_3\Lambda_3(Q^{\ep}), 
\nonumber
\end{align}
$e_k$ for $k=1,2,3$ are real constants which will be decided later, 
and 
\begin{align}
\Lambda_k(Q^{\ep}) v
={}^{t}((\Lambda_k(Q^{\ep}) v)_1, 
\ldots, (\Lambda_k(Q^{\ep}) v)_n) 
\qquad (k=1,2,3) 
\nonumber
\end{align}
for any  
$v=v(t,x): [0,T_{\ep}]\times \TT\to \mathbb{C}^n$
are defined by 
\begin{align}
&(\Lambda_1(Q^{\ep}) v)_j
=
-\dfrac{1}{2}
\sum_{p,q,r=1}^n
S_{p,q,r}^{1,j}
\left(
v_p\overline{Q^{\ep}_q}
-
Q^{\ep}_p\overline{v_q}
\right)
Q^{\ep}_r, 
\label{eq:b381}
\\
&(\Lambda_2(Q^{\ep}) v)_j
=
-\dfrac{1}{4}
\sum_{p,q,r=1}^n
S_{p,q,r}^{1,j}
Q^{\ep}_p\overline{Q^{\ep}_q}
v_r, 
\label{eq:b382}
\\
&(\Lambda_3(Q^{\ep}) v)_j
=
-\dfrac{1}{4}
\sum_{p,q,r=1}^n
\left\{
S_{p,q,r}^{3,j}-(m-1)(S_{p,q,r}^{1,j}-2S_{p,q,r}^{2,j})
\right\}
Q^{\ep}_p\overline{Q^{\ep}_q}
v_r.
\label{eq:b383}
\end{align}
By the Sobolev embedding, there exist constants $C_1,C_2>0$ which are 
independent of $\ep\in (0,1)$ such that   
\begin{equation}
\frac{\|Q^{\ep}(t)\|_{H^m}^2}{C_1\,(1+\|Q^{\ep}(t)\|_{H^1}^2)}
\leqslant 
\mathcal{E}_m(Q^{\ep}(t))^2
\leqslant 
C_2\,
(1+\|Q^{\ep}(t)\|_{H^1}^2)
\|Q^{\ep}(t)\|_{H^m}^2
\label{eq:eqi9231}
\end{equation}
on $[0,T_{\ep}]$.  
Furthermore, we set 
\begin{equation}
T^{\star}_{\ep}
=
\sup
\left\{
T>0 \ | \ 
\mathcal{E}_4(Q^{\ep}(t))\leqslant 2\mathcal{E}_4(Q^{\ep}(0))
\ \ 
\text{for all}
\ \
t\in[0,T]
\right\}. 
\label{eq:cuttime}
\end{equation}
Combining \eqref{eq:Nml}, 
\eqref{eq:eqi9231}, 
\eqref{eq:cuttime}, 
and $\|Q_0^{\ep}\|_{H^4}\leqslant \|Q_0\|_{H^4}$ 
which follows from \eqref{eq:bs1} for $m=4$, 
we can show that there exist  
$C_k=C_k(\|Q_0\|_{H^4})>0$ for $k=3,4,5$ depending on 
$\|Q_0\|_{H^4}$ but not on $\ep\in (0,1)$ such that  
\begin{align}
\sup_{t\in [0,T_{\ep}^{\star}]}\|Q^{\ep}(t)\|_{H^4}^2
&\leqslant 
C_3(\|Q_0\|_{H^4})    
\label{eq:H3Emn}
\end{align}
and 
\begin{align}
&\frac{\|Q^{\ep}(t)\|_{H^m}^2}{C_4(\|Q_0\|_{H^4})}
\leqslant 
\mathcal{E}_m(Q^{\ep}(t))^2
\leqslant 
C_5(\|Q_0\|_{H^4})\|Q^{\ep}(t)\|_{H^m}^2
\ 
\text{on $[0,T_{\ep}^{\star}]$}.
\label{eq:H3Emm}
\end{align} 
\par 
We next 
investigate the equation satisfied by $V^{\ep}$ on 
$[0,T_{\ep}^{\star}]\times \TT$. 
By \eqref{eq:b384} and \eqref{eq:V_j},   
\begin{align}
\p_tV^{\ep}
&=
P^{\ep}(Q^{\ep})U^{\ep}+\p_t(M_a^{-1}\Lambda(Q^{\ep}) \p_x^{m-2}Q^{\ep})
+
\mathcal{R}_{\leqslant m}(Q^{\ep})
\nonumber
\\
&=
P^{\ep}(Q^{\ep})V^{\ep}
-\left\{P^{\ep}(Q^{\ep})(M_a^{-1}\Lambda(Q^{\ep})\p_x^{m-2}Q^{\ep})
-\p_t(M_a^{-1}\Lambda(Q^{\ep}) \p_x^{m-2}Q^{\ep})
\right\}
\nonumber
\\
&\quad
+
\mathcal{R}_{\leqslant m}(Q^{\ep}).
\label{eq:add9251}
\end{align}
Here, it is easy to see 
$$
\p_t\p_x^{m-2}Q^{\ep}
=
(-\ep^5\p_x^{m+2}+iM_a\p_x^{m+2})Q^{\ep}+r_1, 
$$
where 
$
\|r_1(t)\|_{L^2}\leqslant 
C(\|Q^{\ep}(t)\|_{H^2})
\|Q^{\ep}(t)\|_{H^m}. 
$ 
This shows 
\begin{align}
\p_t(M_a^{-1}\Lambda(Q^{\ep}) \p_x^{m-2}Q^{\ep})
&=
M_a^{-1}\Lambda(Q^{\ep})
(-\ep^5\p_x^{m+2}+iM_a\p_x^{m+2})Q^{\ep}
\nonumber
\\
&\quad 
+O\left(
|\p_tQ^{\ep}|
|Q^{\ep}|
|\p_x^{m-2}Q^{\ep}|
\right)
+r_2, 
\nonumber
\end{align}
where $\|r_2(t)\|_{L^2}
\leqslant 
C(\|Q^{\ep}(t)\|_{H^2})
\|Q^{\ep}(t)\|_{H^m}$.
Since $[M_a,\Lambda(Q^{\ep})]=[aI_n,\Lambda(Q^{\ep})]=0$, 
we see $\Lambda(Q^{\ep})iM_a=\Lambda(Q^{\ep})M_ai=M_a\Lambda(Q^{\ep})i$, 
and hence 
\begin{align}
\p_t(M_a^{-1}\Lambda(Q^{\ep}) \p_x^{m-2}Q^{\ep})
&=
-\ep^5M_a^{-1}\Lambda(Q^{\ep})\p_x^{m+2}Q^{\ep}
+\Lambda(Q^{\ep}) i\p_x^{m+2}Q^{\ep}
\nonumber
\\
&\quad 
+O\left(
|\p_tQ^{\ep}|
|Q^{\ep}|
|\p_x^{m-2}Q^{\ep}|
\right)
+r_2. 
\label{eq:add10281}
\end{align}
On the other hand, by the definition of $P^{\ep}(Q^{\ep})$ given by 
\eqref{eq:loc1}, 
\begin{align}
&P^{\ep}(Q^{\ep})(M_a^{-1}\Lambda(Q^{\ep})\p_x^{m-2}Q^{\ep})
\nonumber
\\
&=
-\ep^5M_a^{-1}\p_x^4\left\{\Lambda(Q^{\ep})\p_x^{m-2}Q^{\ep}\right\}
+i\p_x^4\left\{\Lambda(Q^{\ep})\p_x^{m-2}Q^{\ep}\right\}
+r_3, 
\label{eq:add10282}
\end{align}
where 
$
\|r_3(t)\|_{L^2}
\leqslant 
C(\|Q^{\ep}(t)\|_{H^2})
\|Q^{\ep}(t)\|_{H^m}$.  
Combining \eqref{eq:add9251}-\eqref{eq:add10282}, we obtain
\begin{align}
\p_tV^{\ep}
&=
P^{\ep}(Q^{\ep})V^{\ep}
-
\left[
i\p_x^4,\Lambda(Q^{\ep})\p_x^{-2}
\right]U^{\ep}
+
\ep^5M_a^{-1}
\left[
\p_x^4,\Lambda(Q^{\ep})\p_x^{-2}
\right]U^{\ep}
\nonumber 
\\
&\quad 
+O\left(
|\p_tQ^{\ep}|
|Q^{\ep}|
|\p_x^{m-2}Q^{\ep}|
\right)
+
\mathcal{R}_{\leqslant m}(Q^{\ep})+r_2-r_3, 
\label{eq:c9232}
\end{align}
where we write 
$
\left[
i\p_x^4,\Lambda(Q^{\ep})\p_x^{-2}
\right]U^{\ep}
=
i\p_x^4\left\{\Lambda(Q^{\ep})\p_x^{m-2}Q^{\ep}\right\}
-
\Lambda(Q^{\ep})i\p_x^{m+2}Q^{\ep}
$ and 
$
\left[
\p_x^4,\Lambda(Q^{\ep})\p_x^{-2}
\right]U^{\ep}
=
\p_x^4\left\{\Lambda(Q^{\ep})\p_x^{m-2}Q^{\ep}\right\}
-
\Lambda(Q^{\ep})\p_x^{m+2}Q^{\ep}
$ by noting $U^{\ep}=\p_x^{m}Q^{\ep}$. 
\begin{proposition}
\label{proposition:comm_loc}
\begin{align}
\left[
i\p_x^4, \Lambda_1(Q^{\ep})\p_x^{-2}
\right]U^{\ep}
&=
-
P_1^1(Q^{\ep})\p_x^2U^{\ep}
-\dfrac{1}{2}
\p_x\left\{P_2^1(Q^{\ep})\p_xU^{\ep}\right\}
\nonumber
\\
&\quad 
-P_3^1(Q^{\ep})\p_xU^{\ep}
-2P_5^1(Q^{\ep})\p_xU^{\ep} 
+r_4, 
\label{eq:b386}
\\
\left[
i\p_x^4, \Lambda_2(Q^{\ep})\p_x^{-2}
\right]U^{\ep}
&= 
-P_3^1(Q^{\ep})\p_xU^{\ep}+r_5, 
\label{eq:b387}
\\
\left[
i\p_x^4, \Lambda_3(Q^{\ep})\p_x^{-2}
\right]U^{\ep}
&= 
-P_{3,m}(Q^{\ep})\p_xU^{\ep}+r_6, 
\label{eq:b388}
\end{align}
where $
\|r_k(t)\|_{L^2}
\leqslant 
C(\|Q^{\ep}(t)\|_{H^4})\|Q^{\ep}(t)\|_{H^m}
$ for each $k=4,5,6$. 
\end{proposition}
\begin{proof}[Proof of Proposition~\ref{proposition:comm_loc}]
First,  
\begin{align} 
\left[
i\p_x^4, \Lambda_1(Q^{\ep})\p_x^{-2}
\right]U^{\ep}
&=
K_1+K_2, 
\label{eq:b550}
\end{align}
where 
\begin{align}
K_1&:=
i\p_x^4\left\{
\Lambda_1(Q^{\ep})\p_x^{m-2}Q^{\ep}
\right\}
-i \Lambda_1(Q^{\ep})
\left(
\p_x^{m+2}Q^{\ep}
\right), 
\ 
K_2:=\left[iI_n,\Lambda_1(Q^{\ep})\right]\p_x^{m+2}Q^{\ep}.
\nonumber
\end{align} 
The definition of $\Lambda_1(Q^{\ep})$ and the Leibniz rule yields  
\begin{align}
K_1&=
4i\p_x(\Lambda_1(Q^{\ep}))\p_xU^{\ep}+r_4, 
\nonumber
\end{align}
where 
$
\|r_4(t)\|_{L^2}\leqslant 
C(\|Q^{\ep}(t)\|_{H^4})\|Q^{\ep}(t)\|_{H^m}$.
Here, by a simple computation, 
\begin{align}
(i\p_x(\Lambda_1(Q^{\ep})) v)_j
&=
\dfrac{1}{2}K_{11}+\dfrac{1}{2}K_{12}, 
\nonumber
\end{align}
where 
\begin{align}
K_{11}&:=
i\sum_{p,q,r=1}^n
S_{p,q,r}^{1,j}
\overline{v_q}
\left(
\p_xQ^{\ep}_pQ^{\ep}_r
+
Q^{\ep}_p\p_xQ^{\ep}_r
\right), 
\nonumber
\\
K_{12}&:=
-i
\sum_{p,q,r=1}^n
S_{p,q,r}^{1,j}
v_p\overline{\p_xQ^{\ep}_q}
Q^{\ep}_r
-i
\sum_{p,q,r=1}^n
S_{p,q,r}^{1,j}
v_p\overline{Q^{\ep}_q}
\p_xQ^{\ep}_r.
\nonumber
\end{align}
Both for $K_{11}$ and $K_{12}$, 
by replacing $p$ and $r$ and by using 
\eqref{eq:tsu3s} for $\ell=1$, we see 
$K_{11}
=-(P_5^1(Q^{\ep})v)_j$ and  
$K_{12}
=
-\left(
P_3^1(Q^{\ep})v
\right)_j$. 
Hence we obtain 
\begin{align}
i\p_x(\Lambda_1(Q^{\ep}))
&=
-\dfrac{1}{2}P_3^1(Q^{\ep})
-\dfrac{1}{2}P_5^1(Q^{\ep}), 
\label{eq:I12}
\end{align} 
and thus 
\begin{align}
K_1&=
-2P_3^1(Q^{\ep})\p_xU^{\ep}
-2P_5^1(Q^{\ep})\p_xU^{\ep}
+r_4. 
\label{eq:K1}
\end{align}
On the other hand, by the definition of $\Lambda_1(Q^{\ep})$, 
we deduce 
\begin{align}
\left(
\left[
i\,I_n, \Lambda_1(Q^{\ep})
\right]v
\right)_j
&=
-\dfrac{i}{2}
\sum_{p,q,r=1}^n
S_{p,q,r}^{1,j}
\left(
v_p\overline{Q^{\ep}_q}
-
Q^{\ep}_p\overline{v_q}
\right)
Q^{\ep}_r
\nonumber
\\
&\quad 
+\dfrac{1}{2}
\sum_{p,q,r=1}^n
S_{p,q,r}^{1,j}
\left(
iv_p\overline{Q^{\ep}_q}
-
Q^{\ep}_p\overline{iv_q}
\right)
Q^{\ep}_r
\nonumber
\\
&=
-\left(P_1^1(Q^{\ep})v\right)_j
-i
\sum_{p,q,r=1}^n
S_{p,q,r}^{1,j}
v_p\overline{Q^{\ep}_q}
Q^{\ep}_r.
\nonumber
\end{align}
Applying \eqref{eq:tsu3s} for $\ell=1$ 
and replacing $p$ and $r$ 
in the second term of the right hand side, we obtain  
\begin{align}
\left[
i\,I_n, \Lambda_1(Q^{\ep})
\right]
&=
-P_1^1(Q^{\ep})
-\dfrac{1}{2} P_2^1(Q^{\ep}). 
\label{eq:add10284}
\end{align}
Using \eqref{eq:add10284} and \eqref{eq:53012} for $\ell=1$, 
we have 
\begin{align}
K_2&=
-P_1^1(Q^{\ep})\p_x^2U^{\ep}
-\dfrac{1}{2}\p_x\left\{P_2^1(Q^{\ep}) \p_xU^{\ep}\right\}
+P_3^1(Q^{\ep})\p_xU^{\ep}. 
\label{eq:K2}
\end{align}
Substituting \eqref{eq:K1} and \eqref{eq:K2} 
into \eqref{eq:b550}, 
we derive \eqref{eq:b386}. 
\par 
Second, \eqref{eq:b387} is easy to prove, because $[iI_n,\Lambda_2(Q^{\ep})]=0$. 
Indeed, by a simple computation, 
we see the terms including $\p_x^2U^{\ep}_r$ are canceled out and   
\begin{align}
\left[
i\p_x^4, \Lambda_2(Q^{\ep})\p_x^{-2}
\right]U^{\ep}
&=
4i\p_x(\Lambda_2(Q^{\ep}))\p_xU^{\ep}+r_5, 
\nonumber
\end{align}
where 
\begin{align}
i\p_x(\Lambda_2(Q^{\ep}))
&=
-\dfrac{1}{4}P_3^1(Q^{\ep})
\label{eq:add10285}
\end{align}
follows from the definition of $\Lambda_2(Q^{\ep})$, 
and 
$
\|r_5(t)\|_{L^2}\leqslant 
C(\|Q^{\ep}(t)\|_{H^4})\|Q^{\ep}(t)\|_{H^m}$.
\par 
Third, \eqref{eq:b388} is obtained by the same 
way as we show \eqref{eq:b387}. 
\end{proof}
By Proposition~\ref{proposition:comm_loc}, we have 
\begin{align}
\left[
i\p_x^4,\Lambda(Q^{\ep})\p_x^{-2}
\right]U^{\ep}
&=
-e_1\,P_1^1(Q^{\ep})\p_x^2U^{\ep}
-\dfrac{e_1}{2}
\p_x\left\{P_2^1(Q^{\ep})\p_xU^{\ep}\right\}
\nonumber
\\
&\quad 
-(e_1+e_2)P_3^1(Q^{\ep})\p_xU^{\ep}
-2e_1P_5^1(Q^{\ep})\p_xU^{\ep}
\nonumber
\\
&\quad
-e_3P_{3,m} (Q^{\ep})\p_xU^{\ep}
+e_1r_4+e_2r_5+e_3r_6. 
\label{eq:c9233}           
\end{align}
On the other hand, 
it is not difficult to see 
\begin{align} 
\ep^5M_a^{-1}
\left[
\p_x^4,\Lambda(Q^{\ep})\p_x^{-2}
\right]U^{\ep}
&=
\ep^5
O\left(
|Q^{\ep}||\p_x^2U^{\ep}|
+|\p_xQ^{\ep}||Q^{\ep}||\p_xU^{\ep}|
\right)
+r_7, 
\label{eq:ep9231}
\end{align}
where 
$
\|r_7(t)\|_{L^2}\leqslant 
C(\|Q^{\ep}(t)\|_{H^4})\|Q^{\ep}(t)\|_{H^m}$. 
\par 
Combining \eqref{eq:c9232} and 
\eqref{eq:c9233}-\eqref{eq:ep9231} with the estimate for 
$r_2,\ldots,r_7$ and \eqref{eq:10161}, 
and then substituting 
$U^{\ep}=V^{\ep}-M_a^{-1}\Lambda(Q^{\ep})\p_x^{m-2}Q^{\ep}$, 
we deduce  
\begin{align}
\p_tV^{\ep}
&=
P^{\ep}(Q^{\ep})V^{\ep}
+e_1\,P_1^1(Q^{\ep})\p_x^2V^{\ep}
+\dfrac{e_1}{2}
\p_x\left\{P_2^1(Q^{\ep})\p_xV^{\ep}\right\}
\nonumber
\\
&\quad 
+(e_1+e_2)P_3^1(Q^{\ep})\p_xV^{\ep}
+2e_1P_5^1(Q^{\ep})\p_xV^{\ep}
+e_3P_{3,m} (Q^{\ep})\p_xV^{\ep}
\nonumber
\\
&\quad 
+\ep^5
O\left(
|Q^{\ep}||\p_x^2V^{\ep}|
+|\p_xQ^{\ep}||Q^{\ep}||\p_xV^{\ep}|
\right)
\nonumber
\\
&\quad 
+O\left(
|\p_tQ^{\ep}|
|Q^{\ep}|
|\p_x^{m-2}Q^{\ep}|
\right)
+r_8, 
\nonumber
\end{align}
where   
$
\|r_8(t)\|_{L^2}\leqslant 
C(\|Q^{\ep}(t)\|_{H^4})\|Q^{\ep}(t)\|_{H^m}$.
Substituting \eqref{eq:loc1} into the above, 
and taking $(e_1,e_2,e_3)=(-1,1,-1)$ so that 
all the terms $P_1^1(Q^{\ep})\p_x^2V^{\ep}$, 
$P_3^1(Q^{\ep})\p_xV^{\ep}$ and 
$P_{3,m}(Q^{\ep})\p_xV^{\ep}$ are canceled out, 
we derive   
\begin{align}
\p_tV^{\ep}
&=
\left(-\ep^5\p_x^4+iM_a\p_x^4+iM_{\lambda}\p_x^2\right)V^{\ep}
+
\p_x\left\{
\left(P_2^2(Q^{\ep})-\dfrac{1}{2}P_2^1(Q^{\ep})\right)
\p_xV^{\ep}\right\}
\nonumber
\\
&\quad 
+P_4^4(Q^{\ep})\p_xV^{\ep}
-2P_5^1(Q^{\ep})\p_xV^{\ep}
+P_{5,m}(Q^{\ep})\p_xV^{\ep}
\nonumber
\\
&\quad 
+\ep^5
O\left(
|Q^{\ep}||\p_x^2V^{\ep}|
+|\p_xQ^{\ep}||Q^{\ep}||\p_xV^{\ep}|
\right)
\nonumber
\\
&\quad 
+O\left(
|\p_tQ^{\ep}|
|Q^{\ep}|
|\p_x^{m-2}Q^{\ep}|
\right)
+r_8. 
\label{eq:eqV}
\end{align}
\par
Using \eqref{eq:eqV} and integration by parts, we deduce 
\begin{align}
&\dfrac{1}{2}
\dfrac{d}{dt}
\|V^{\ep}(t)\|_{L^2}^2
=
\operatorname{Re}
\langle
\p_tV^{\ep}(t), V^{\ep}(t)
\rangle
=:-\ep^5E_1+\sum_{k=2}^7E_k, 
\nonumber
\end{align}
where 
\begin{align}
E_1
&=
\|\p_x^2V^{\ep}\|_{L^2}^2
-\Re\langle
O\left(
|Q^{\ep}||\p_x^2V^{\ep}|
+|\p_xQ^{\ep}||Q^{\ep}||\p_xV^{\ep}|
\right), V^{\ep}
\rangle, 
\nonumber
\\
E_2&=
\Re\left\langle
\p_x\left\{
\left(P_2^2(Q^{\ep})-\dfrac{1}{2}P_2^1(Q^{\ep})\right)
\p_xV^{\ep}\right\}, 
V^{\ep}
\right\rangle, 
\nonumber
\\
E_3
&=
\Re\langle
P_4^4(Q^{\ep})\p_xV^{\ep}, 
V^{\ep}
\rangle, 
\quad
E_4=
-2
\Re\langle
P_5^1(Q^{\ep})\p_xV^{\ep}, 
V^{\ep}
\rangle, 
\nonumber
\\
E_5&= 
\Re\langle
P_{5,m}(Q^{\ep})
\p_xV^{\ep}, V^{\ep}
\rangle, 
\quad 
E_6= 
\Re\langle
O\left(
|\p_tQ^{\ep}|
|Q^{\ep}|
|\p_x^{m-2}Q^{\ep}|
\right), V^{\ep}
\rangle, 
\nonumber
\\
E_7&=
\Re\langle
r_8, V^{\ep}
\rangle. 
\nonumber
\end{align}
Here, by using integration by parts and  
\eqref{eq:491} in Proposition~\ref{proposition:symskewsym} for $\ell=1,2$, 
\begin{align}
E_2
&=
-\Re\left\langle
\left(P_2^2(Q^{\ep})-\dfrac{1}{2}P_2^1(Q^{\ep})\right)
\p_xV^{\ep}, 
\p_xV^{\ep}
\right\rangle
=0. 
\label{eq:9241}
\end{align}
By using integration by parts,   
\eqref{eq:481} for $\ell=4$, 
and Sobolev's embedding,  
\begin{align}
E_3
&=
-\dfrac{1}{2}\Re\langle
\p_x(P_4^4(Q^{\ep}))V^{\ep}, 
V^{\ep}
\rangle
\leqslant 
C(\|Q^{\ep}(t)\|_{H^3})\|V^{\ep}(t)\|_{L^2}^2. 
\label{eq:9242}
\end{align} 
In the same way as above, by using    
\eqref{eq:391} for $\ell=1$ 
and that for $\ell=1,5$ respectively, 
\begin{align}
E_4
&=
\Re\langle
\p_x(P_5^1(Q^{\ep}))V^{\ep}, 
V^{\ep}
\rangle
\leqslant 
C(\|Q^{\ep}(t)\|_{H^3})\|V^{\ep}(t)\|_{L^2}^2, 
\label{eq:9243}
\\
E_5
&=
-\dfrac{1}{2}\Re\langle
\p_x(P_{5,m}(Q^{\ep}))V^{\ep}, 
V^{\ep}
\rangle
\leqslant 
C(\|Q^{\ep}(t)\|_{H^3})\|V^{\ep}(t)\|_{L^2}^2. 
\label{eq:9244}
\end{align} 
(We mention that \eqref{eq:391} plays a 
crucial role when we consider the case $n\geqslant 2$, in that 
it is automatically satisfied when $n=1$.)
Noting $\p_tQ^{\ep}=(-\ep^5\p_x^4+iM_a\p_x^4)Q^{\ep}+\cdots\in 
C([0,T_{\ep}^{\star}]; L^2(\TT;\mathbb{C}^n))$
which follows from the assumption $m\geqslant 4$, 
and noting also $\ep\in (0,1)$, 
we use \eqref{eq:eqi9231} to obtain 
\begin{align}
E_6
&\leqslant 
C \|\p_tQ^{\ep}(t)\|_{L^2}
\|Q^{\ep}(t)\|_{L^{\infty}}
\|\p_x^{m-2}Q^{\ep}(t)\|_{L^{\infty}}
\|V^{\ep}(t)\|_{L^2}
\nonumber
\\
&\leqslant 
C(\|Q^{\ep}(t)\|_{H^4})
\mathcal{E}_m(Q^{\ep}(t))^2.  
\label{eq:9245}
\end{align}
By the Young inequality and $\ep\in (0,1)$, 
it is easy to obtain
\begin{align}
-\ep^5E_1
&\leqslant 
-\dfrac{\ep^5}{4}\|\p_x^2V^{\ep}(t)\|_{L^2}^2
+C(\|Q_0\|_{H^4})\|V^{\ep}(t)\|_{L^2}^2. 
\label{eq:9246}
\end{align}
By using \eqref{eq:eqi9231}, it is also easy to see
\begin{align}
E_7
&\leqslant 
\|r_8(t)\|_{L^2}
\|V^{\ep}(t)\|_{L^2}
\leqslant 
C(\|Q^{\ep}(t)\|_{H^4})\mathcal{E}_m(Q^{\ep}(t))^2. 
\label{eq:9247}
\end{align}
Combining \eqref{eq:9241}-\eqref{eq:9247}, we derive 
\begin{align}
&\dfrac{1}{2}
\dfrac{d}{dt}
\|V^{\ep}(t)\|_{L^2}^2
+\dfrac{\ep^5}{4}\|\p_x^2V^{\ep}(t)\|_{L^2}^2
\leqslant 
C(\|Q^{\ep}(t)\|_{H^4})\mathcal{E}_m(Q^{\ep}(t))^2. 
\label{eq:9248}
\end{align}
\par 
On the other hand, by using integration by parts,  
it is not difficult to obtain 
\begin{align}
&\dfrac{1}{2}
\dfrac{d}{dt}
\|Q^{\ep}(t)\|_{H^{m-1}}^2
+\dfrac{\ep^5}{4}\sum_{k=0}^{m-1}
\|\p_x^2Q^{\ep}(t)\|_{H^k}^2
\leqslant 
C(\|Q^{\ep}(t)\|_{H^4})\|Q^{\ep}(t)\|_{H^{m}}^2.
\label{eq:9249}
\end{align}
\par 
Combining \eqref{eq:9248} and \eqref{eq:9249}, 
and using \eqref{eq:H3Emn}-\eqref{eq:H3Emm}, 
we have 
\begin{align}
&
\dfrac{d}{dt}
\mathcal{E}_m(Q^{\ep}(t))^2
\leqslant 
A_m(\|Q_0\|_{H^4})\mathcal{E}_m(Q^{\ep}(t))^2
\quad
\text{on $[0,T_{\ep}^{\star}]$}, 
\nonumber
\end{align}
where $A_m(\|Q_0\|_{H^4})>0$ is a constant depending on $\|Q_0\|_{H^4}$ and $m$ 
but not on $\ep$.  
The Gronwall inequality shows  
\begin{align}
\mathcal{E}_m(Q^{\ep}(t))^2
&\leqslant 
\mathcal{E}_m(Q^{\ep}(0))^2
\exp(A_m(\|Q_0\|_{H^4})t)
\quad
\text{on $[0,T_{\ep}^{\star}]$}.
\nonumber
\end{align}
This inequality for $m=4$ and the definition of $T_{\ep}^{\star}$
implies 
$4\leqslant \exp(A_4(\|Q_0\|_{H^4})T_{\ep}^{\star})$, 
from which we obtain 
\begin{equation}
T_{\ep}^{\star}\geqslant 
\frac{\log 4}{A_4(\|Q_0\|_{H^4})}
=:T>0, 
\label{eq:extime}
\end{equation}
and  
\begin{align}
\sup_{t\in [0,T]}
\mathcal{E}_m(Q^{\ep}(t))^2
&\leqslant 
\mathcal{E}_m(Q^{\ep}(0))^2
\exp(A_m(\|Q_0\|_{H^4})T)
\nonumber. 
\end{align} 
Finally, 
by combining this, \eqref{eq:H3Emm}, \eqref{eq:eqi9231} for $t=0$, 
and  \eqref{eq:bs1}, 
we deduce 
\begin{align}
\sup_{t\in [0,T]}
\|Q^{\ep}(t)\|_{H^m}^2
&\leqslant 
C(T, \|Q_0\|_{H^4}, m)
\|Q_0^{\ep}\|_{H^m}^2
\leqslant 
C(T, \|Q_0\|_{H^4}, m)
\|Q_0\|_{H^m}^2.
\label{eq:H153}
\end{align}
This completes the proof of Proposition~\ref{proposition:preloc}. 
 \end{proof}
 \begin{remark}
 \label{remark:cond9292}
 Our proof of Proposition~\ref{proposition:preloc}  
has applied \eqref{eq:491} for $\ell=1,2$, 
 \eqref{eq:481} for $\ell=4$, 
 and \eqref{eq:391} for $\ell=1,5$. 
 As is noted in Remark~\ref{remark:cond929}, applying them shows 
 our proof actually applies all the conditions \textup{(G1)-(G2)} and 
 \textup{(G4)-(G5)}.  
 Considering Propositions~\ref{proposition:BC} and \ref{proposition:GC}, 
 our proof cannot drop any of the assumptions \textup{(B1)-(B6)} under 
 \textup{(A1)-(A2)}. 
 This is also valid in Sections~\ref{section:BS} and  \ref{section:prooflw} 
 to complete the proof of Theorem~\ref{theorem:lwp}.  
 \end{remark}
 %
\section{Estimate for the difference of approximated solutions} 
\label{section:BS} 
Let 
$\left\{Q_{0}^{\ep}\right\}_{\ep\in (0,1)}$ be the Bona-Smith approximation of 
$Q_0\in H^m(\TT;\mathbb{C}^n)$ with 
an integer $m\geqslant 4$.
Let $Q^{\mu}$ and $Q^{\nu}$ denote the solutions to 
\eqref{eq:bpde}-\eqref{eq:bdata} 
with $(\ep,Q_0^{\ep})$ replaced by $(\mu,Q_0^{\mu})$ and that by $(\nu,Q_0^{\nu})$ 
respectively.  
Proposition~\ref{proposition:preloc} in Section~\ref{section:local} ensures    
both $\left\{Q^{\mu}\right\}_{\mu\in(0,1)}$ and $\left\{Q^{\nu}\right\}_{\nu\in (0,1)}$ 
are uniformly bounded in 
$L^{\infty}(0,T;H^m(\TT;\mathbb{C}^n))$, where 
$T=T(\|Q_0\|_{H^4})>0$ is decided by \eqref{eq:extime} independently of $\mu$ and $\nu$. 
In particular, 
 since \eqref{eq:H153} holds also for $Q^{\mu}$ and $Q_0^{\mu}$, 
\begin{align}
\|Q^{\mu}\|_{C([0,T]:H^m)}
&\leqslant 
C_1(T, \|Q_0\|_{H^4})
\|Q_0^{\mu}\|_{H^m} 
\label{eq:B51}
\\
&\leqslant 
C_2(T,\|Q_0\|_{H^m}), 
\label{eq:B52}
\end{align} 
where $C_1(T, \|Q_0\|_{H^4})$ and $C_2(T,\|Q_0\|_{H^m})$ are positive 
constants which depend also on $m$ but not on $\mu$. 
The same also holds for $Q^{\nu}$. 
\par
The goal of this section is to get the following: 
\begin{proposition}
\label{proposition:cauchy}
There exists a constant $C=C(T,\|Q_0\|_{H^m})>1$ such that 
for all $\mu$ and $\nu$ satisfying 
$0<\mu\leqslant \nu<1$, 
\begin{align}
\|Q^{\mu}-Q^{\nu}\|_{C([0,T];H^1)}
&\leqslant 
C
(\nu^{m-1}+\nu^4),
\label{eq:1cauchy}
\\
\|Q^{\mu}-Q^{\nu}\|_{C([0,T];H^m)}
&\leqslant 
C\left(
\nu^{m-3}+\nu
+\|Q_0^{\mu}-Q_0^{\nu}\|_{H^m}
\right).
\label{eq:mcauchy}
\end{align}  
\end{proposition}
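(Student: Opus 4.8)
The plan is to set $W=Q^\mu-Q^\nu$ and subtract the two regularized equations \eqref{eq:bpde}. Writing $\mu^5\partial_x^4Q^\mu-\nu^5\partial_x^4Q^\nu=\mu^5\partial_x^4W+(\mu^5-\nu^5)\partial_x^4Q^\nu$, this gives
\begin{align}
\partial_tW
&=
\left(-\mu^5\partial_x^4+iM_a\partial_x^4+iM_{\lambda}\partial_x^2\right)W
-(\mu^5-\nu^5)\partial_x^4Q^\nu
\nonumber\\
&\quad
+\left\{F(Q^\mu,\partial_xQ^\mu,\partial_x^2Q^\mu)-F(Q^\nu,\partial_xQ^\nu,\partial_x^2Q^\nu)\right\},
\nonumber
\end{align}
where the bracketed nonlinear difference is, by a telescoping Leibniz expansion, a finite sum of products in which exactly one factor is $W$, $\partial_xW$, or $\partial_x^2W$ and the remaining factors are $Q^\mu$, $Q^\nu$ and their derivatives; hence it is linear in $W$ with coefficients built from the two solutions, with a principal part of the same shape as in Proposition~\ref{proposition:pF}. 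The essential preliminary input is that Proposition~\ref{proposition:preloc}, applied at regularity $m+\ell$ (legitimate since $Q^\mu,Q^\nu\in C([0,T];H^\infty)$ and the existence time \eqref{eq:extime} depends only on $\|Q_0\|_{H^4}$), together with \eqref{eq:6092}, yields $\|Q^\mu\|_{C([0,T];H^{m+\ell})}+\|Q^\nu\|_{C([0,T];H^{m+\ell})}\leqslant C\nu^{-\ell}$ for every $\ell\geqslant 0$. These are what convert the parabolic defect $(\mu^5-\nu^5)\partial_x^4Q^\nu$, whose amplitude satisfies $|\mu^5-\nu^5|\leqslant\nu^5$, into the powers of $\nu$ on the right-hand sides.

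For \eqref{eq:1cauchy} I would run an $H^1$ energy estimate on $W$. The difficulty is precisely the loss of derivatives already present in \eqref{eq:apde}: differentiating the nonlinear difference once produces a $\partial_x^3W$ term that cannot be absorbed by the $L^2$ pairing with $\partial_xW$. To cancel it I would gauge-transform $\partial_xW$ exactly as $V^\ep$ was built in \eqref{eq:V_j}, but specialized to the first-derivative level (the parameter $m$ replaced by $1$); this is the role of the operators in \eqref{eq:5313}--\eqref{eq:p5313} built from $\Lambda(Q^\mu)$ and $\Lambda(Q^1)$. The commutator of this gauge with $iM_a\partial_x^4$ removes the $\partial_x^3W$ loss in the same way as Proposition~\ref{proposition:comm_loc}, while the symmetric and skew-symmetric properties \eqref{eq:491}--\eqref{eq:391} of Proposition~\ref{proposition:symskewsym} dispose of the remaining borderline contributions after integration by parts, and the parabolic term $-\mu^5\|\partial_x^3W\|_{L^2}^2$ absorbs the $O(\mu^5)$ remainders through Young's inequality as in \eqref{eq:9246}. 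The defect $(\mu^5-\nu^5)\partial_x^4Q^\nu$ then contributes, in $H^1$, at most $\nu^5\|Q^\nu\|_{H^5}\leqslant C\nu^4$ (the worst case being $m=4$, where $\|Q^\nu\|_{H^5}\leqslant C\nu^{-1}$), and the initial datum contributes $\|Q_0^\mu-Q_0^\nu\|_{H^1}\leqslant C\nu^{m-1}$ by \eqref{eq:6093} with $\ell=m-1$ (using $\mu\leqslant\nu$). Gronwall's inequality then yields \eqref{eq:1cauchy}.

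For \eqref{eq:mcauchy} I would repeat the Section~\ref{section:local} construction directly for the difference: gauge-transform $\partial_x^mW$ using $\Lambda(Q^\mu)$ as in \eqref{eq:Nml}--\eqref{eq:V_j}, derive the evolution of the resulting variable, and carry out the $L^2$ energy estimate, reading off $\|W\|_{H^m}$ through the analog of the norm equivalence \eqref{eq:H3Emm}. The gauge again cancels the principal loss of derivatives ($\partial_x^{m+2}W$ paired with $\partial_x^mW$) and Proposition~\ref{proposition:symskewsym} kills the borderline terms, so only genuinely lower-order remainders survive. Among these the dangerous ones are those in which all derivatives fall on $Q^\nu$ while $W$ stays at low order, typically $\partial_x^{m+2}Q^\nu$ paired with $W$; bounding $\|W\|_{L^\infty}\lesssim\|W\|_{H^1}$ by \eqref{eq:1cauchy} and using $\|Q^\nu\|_{H^{m+2}}\leqslant C\nu^{-2}$ turns such a term into $C(\nu^{m-1}+\nu^4)\nu^{-2}=C(\nu^{m-3}+\nu^2)$, which is the origin of the $\nu^{m-3}$. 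The parabolic defect now contributes $\nu^5\|Q^\nu\|_{H^{m+4}}\leqslant C\nu^5\cdot\nu^{-4}=C\nu$, and the initial datum contributes $\|Q_0^\mu-Q_0^\nu\|_{H^m}$ directly. A final Gronwall argument gives \eqref{eq:mcauchy}.

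The step I expect to be the main obstacle is the $H^1$ estimate \eqref{eq:1cauchy}. The gauge of Section~\ref{section:local} is tuned to the top-order $H^m$ estimate, where the loss of derivatives appears at order $m$; at the $H^1$ level the combinatorics of which products lose derivatives is different, and one must verify that the modified gauge \eqref{eq:5313}--\eqref{eq:p5313} (the $m=1$ counterpart of \eqref{eq:V_j}, combined with the modified-energy idea of \cite{Kwon,segata} recalled in the introduction) indeed cancels every surviving third-order term while keeping the coefficient operators in the symmetric and skew-symmetric classes of Proposition~\ref{proposition:symskewsym}. Because \eqref{eq:mcauchy} feeds on \eqref{eq:1cauchy} through the $\nu^{m-3}$ mechanism above, the two estimates must be proved in this order.
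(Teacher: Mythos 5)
Your proposal follows essentially the same route as the paper: the same modified energy \eqref{eq:5313} at the $H^1$ level (with the paper's Remark~\ref{remark:ME} caveat that the naive gauged variable $\p_xW+M_a^{-1}\Lambda(Q^{\mu})\p_x^{-1}W$ is replaced by keeping only its meaningful terms), the same gauged variable $Z^m$ as in \eqref{eq:Z_j} at the $H^m$ level, the same bookkeeping producing $\nu^{m-1}+\nu^4$ and $\nu^{m-3}+\nu$ from \eqref{eq:6092}--\eqref{eq:6093} and the bounds on $\|Q^{\nu}\|_{H^{m+j}}$, and the same ordering in which \eqref{eq:mcauchy} consumes \eqref{eq:1cauchy}. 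The only slip is your blanket claim $\|Q^{\mu}\|_{C([0,T];H^{m+\ell})}\leqslant C\nu^{-\ell}$, which should read $C\mu^{-\ell}$ (and is not $\leqslant C\nu^{-\ell}$ since $\mu\leqslant\nu$); this is harmless because, exactly as in the paper, the estimates only ever place the high derivatives on $Q^{\nu}$.
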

\begin{proof}[Proof of \eqref{eq:1cauchy} in Proposition~\ref{proposition:cauchy}]
For $\mu, \nu$ satisfying 
$0<\mu \leqslant \nu<1$, 
we set $W:=Q^{\mu}-Q^{\nu}$, that is, 
$W={}^t(W_1,\ldots,W_n)$ and
$
W_j
=Q_j^{\mu}-Q_j^{\nu}$ 
for $j\in \left\{1,\ldots,n\right\}$. 
\par 
Since $Q^{\mu}$ and $Q^{\nu}$ satisfy \eqref{eq:bpde} 
with $\ep=\mu$ and that with $\ep=\nu$
respectively, 
\begin{align}
&\left(
 \p_t+\mu^5\p_x^4-iM_a\, \p_x^4-i\, M_{\lambda}\p_x^2
 \right)\p_xW
\nonumber
\\
&=
\p_x\left\{F(Q^{\mu},\p_xQ^{\mu},\p_xQ^{\mu})\right\}
 -\p_x\left\{F(Q^{\nu},\p_xQ^{\nu},\p_xQ^{\nu})\right\}
 +(\nu^5-\mu^5)\p_x^4(\p_xQ^{\nu}). 
 \nonumber
\end{align}
Since Proposition~\ref{proposition:pF} for $Q$ is 
also valid for $Q^{\mu}$ and $Q^{\nu}$, 
we can easily obtain 
\begin{align}
&\left(
 \p_t+\mu^5\p_x^4-iM_a\, \p_x^4-i\, M_{\lambda}\p_x^2
 \right)\p_xW
\nonumber
\\
&=
(\nu^5-\mu^5)\p_x^5Q^{\nu}
+
 P_1^1(Q^{\mu})\p_x^3W
 +
 \p_x\left\{
 P_2^2(Q^{\mu})\p_x^2W
 \right\}
 +\sum_{k=3}^{5}
 P_k^k(Q^{\mu})\p_x^2W
 +r_1, 
 \nonumber
 \end{align}
where $P_k^{\ell}(Q^{\mu})$ is defined by \eqref{eq:b511}-\eqref{eq:b515} 
with $Q$ replaced by $Q^{\mu}$, 
and 
$$
\|r_1(t)\|_{L^2}
\leqslant 
C(\|Q^{\mu}(t)\|_{H^2}, \|Q^{\nu}(t)\|_{H^3}) \|W(t)\|_{H^1}.
$$
It follows that 
\begin{align}
&\dfrac{1}{2}\dfrac{d}{dt}
 \|\p_xW(t)\|_{L^2}^2
 +\mu^5\|\p_x^3W(t)\|_{L^2}^2
 \nonumber
 \\
 &\leqslant
 \Re
 \langle
 P_1^1(Q^{\mu})\p_x^3W,\p_xW
 \rangle
 +
\Re
 \langle
 \p_x\{P_2^2(Q^{\mu})\p_x^2W\},\p_xW
 \rangle
 \nonumber
 \\
 &\quad
 +\sum_{k=3}^5
 \Re\langle
 P_k^k(Q^{\mu})\p_x^2W,\p_xW
 \rangle
 +(\nu^5-\mu^5)\Re
 \langle
 \p_x^5Q^{\nu}, \p_xW
 \rangle
 \nonumber
 \\
 &\quad
 +C(\|Q^{\mu}(t)\|_{H^2}, \|Q^{\nu}(t)\|_{H^3})\|W(t)\|_{H^1}^2.
 \nonumber
 \end{align}
Here,   
by using $\p_x(P_1^1(Q^{\mu}))=-P_3^1(Q^{\mu})+P_5^1(Q^{\mu})$ 
and integration by parts, 
\begin{align}
\Re
  \langle
  P_1^1(Q^{\mu})\p_x^3W,\p_xW
  \rangle
&
=
  -\Re
    \langle
    P_1^1(Q^{\mu})\p_x^2W,\p_x^2W
    \rangle
    +\Re
      \langle
      P_3^1(Q^{\mu})\p_x^2W,\p_xW
      \rangle
 \nonumber
 \\
 &\quad
          -\Re
            \langle
            P_5^1(Q^{\mu})\p_x^2W,\p_xW
            \rangle.
            \nonumber
  \end{align}
  Substitution of  it into the above and the 
  same way as we obtain \eqref{eq:9241}-\eqref{eq:9243} 
  yields 
\begin{align}
 &\dfrac{1}{2}\dfrac{d}{dt}
 \|\p_xW(t)\|_{L^2}^2
 +\mu^5\|\p_x^3W(t)\|_{L^2}^2
 \nonumber
 \\
 &\leqslant 
  -\Re
     \langle
     P_1^1(Q^{\mu})\p_x^2W,\p_x^2W
     \rangle
+\Re
      \langle
      P_3^1(Q^{\mu})\p_x^2W,\p_xW
      \rangle     
\nonumber
\\&\quad 
+\Re
      \langle
      P_3^3(Q^{\mu})\p_x^2W,\p_xW
      \rangle
+(\nu^5-\mu^5)\|\p_x^5Q^{\nu}(t)\|_{L^2}\|W(t)\|_{H^1} 
\nonumber
\\
&\quad 
+C(\|Q^{\mu}(t)\|_{H^3}, \|Q^{\nu}(t)\|_{H^3})\|W(t)\|_{H^1}^2. 
\label{eq:92414}
 \end{align}
\par 
We define  
$\mathcal{E}_1^{\mu, \nu}(W)
=\mathcal{E}_1^{\mu, \nu}(W(t))$ 
by 
\begin{align}
\mathcal{E}_1^{\mu, \nu}(W(t))
&=
\frac{1}{2}\|\p_xW(t)\|_{L^2}^2
+A\, \|W(t)\|_{L^2}^2
-\Re\langle
W(t), M_a^{-1}\Lambda(Q^{\mu})W(t)
\rangle, 
\label{eq:5313}
\end{align}
where 
$\Lambda(Q^{\mu}) 
=-\Lambda_1(Q^{\mu})+\Lambda_2(Q^{\mu})
-\Lambda_3(Q^{\mu})$  
and 
\begin{align}
\Lambda_k(Q^{\mu}) v
={}^{t}((\Lambda_k(Q^{\mu}) v)_1, 
\ldots, (\Lambda_k(Q^{\mu}) v)_n) 
\qquad (k=1,2,3) 
\nonumber
\end{align}
for any  
$v=v(t,x): [0,T]\times \TT\to \mathbb{C}^n$
are defined by 
\begin{align}
&(\Lambda_1(Q^{\mu}) v)_j
=
-\dfrac{1}{2}
\sum_{p,q,r=1}^n
S_{p,q,r}^{1,j}
\left(
v_p\overline{Q^{\mu}_q}
-
Q^{\mu}_p\overline{v_q}
\right)
Q^{\mu}_r, 
\label{eq:u381}
\\
&(\Lambda_2(Q^{\mu}) v)_j
=
-\dfrac{1}{4}
\sum_{p,q,r=1}^n
S_{p,q,r}^{1,j}
Q^{\mu}_p\overline{Q^{\mu}_q}
v_r, 
\label{eq:u382}
\\
&(\Lambda_3(Q^{\mu}) v)_j
=
-\dfrac{1}{4}
\sum_{p,q,r=1}^n
S_{p,q,r}^{3,j}
Q^{\mu}_p\overline{Q^{\mu}_q}
v_r.
\label{eq:u383}
\end{align}
Each $\Lambda_k(Q^{\mu})$ can be regarded as 
$\Lambda_k(Q^{\ep})$ defined by \eqref{eq:b381}-\eqref{eq:b383} 
with $(\ep,m)$ replaced by $(\mu,1)$. 
Moreover, $A=2C_2(T, \|Q_0\|_{H^m})$ and  
$C_2(T, \|Q_0\|_{H^m})>1$ is a constant that is taken to satisfy
\begin{align}
\left|\Re\langle
W(t), M_a^{-1}\Lambda(Q^{\mu})W(t)
\rangle\right|
&\leqslant 
C_1(\|Q^{\mu}(t)\|_{H^1})\|W(t)\|_{L^2}^2 
\nonumber
\\
&\leqslant 
C_2(T, \|Q_0\|_{H^m})\|W(t)\|_{L^2}^2, 
\nonumber
\end{align}
which is possible by \eqref{eq:B52}. 
Taking $A$ in this manner ensures 
\begin{align}
\frac{1}{2}\|W(t)\|_{H^1}^2
&\leqslant 
\mathcal{E}_1^{\mu, \nu}(W(t))
\leqslant 
3C_2(T, \|Q_0\|_{H^m})\|W(t)\|_{H^1}^2
\quad 
\text{on $[0,T]$.}
\label{eq:92415}
\end{align}
\begin{remark} 
\label{remark:ME}
One may expects $\p_xW+M_a^{-1}\Lambda(Q^{\mu})\p_x^{-1}W$ 
as an analogy of  \eqref{eq:V_j} 
plays an essential role in our proof.  
A formal calculation using integration by parts implies  
\begin{align}
&\left\|
\p_xW+M_a^{-1}\Lambda(Q^{\mu})\p_x^{-1}W
\right\|_{L^2}^2
\nonumber
\\
&=
\|\p_xW\|_{L^2}^2
-2\Re \langle
W, M_a^{-1}\Lambda(Q^{\mu})W
\rangle
+\text{(terms including $\p_x^{-1}W$).}
\nonumber 
\end{align}
We define $\mathcal{E}_1^{\mu,\nu}(W)$ 
by taking out only the 
first two terms of the right hand side of it which makes sense. 
The addition of $A\|W\|_{L^2}^2$ ensures the 
positivity of  $\mathcal{E}_1^{\mu,\nu}(W)$ and \eqref{eq:92415}, 
the part of which is motivated by the idea of the modified energy in 
\cite{Kwon,segata}.      
\end{remark}
\par 
We next compute 
\begin{align}
F:&=-\dfrac{d}{dt}
\Re\langle
W(t), M_a^{-1}\Lambda(Q^{\mu})W(t)
\rangle
\nonumber
\\
&=
-\Re\langle
\p_tW(t), M_a^{-1}\Lambda(Q^{\mu})W(t)
\rangle
-\Re\langle
W(t), M_a^{-1}\Lambda(Q^{\mu})\p_tW(t)
\rangle
\nonumber
\\&\quad
+
O\left(
|\p_tQ^{\mu}||Q^{\mu}||W|
\right). 
\nonumber
\end{align}
Noting 
\begin{align}
\p_tW
&=
(-\mu^5\p_x^4+iM_a\p_x^4+iM_{\lambda}\p_x^2)W
\nonumber\\
&\quad 
+(\nu^5-\mu^5)\p_x^4Q^{\nu}
+O(|Q^{\mu}|^2|\p_x^2W|)
+r_2
\label{eq:Wt}
\end{align}
where
$$\|r_2(t)\|_{L^2}
\leqslant 
C(\|Q^{\mu}(t)\|_{H^2}, \|Q^{\nu}(t)\|_{H^3})
\|W(t)\|_{H^1}, 
$$
we  use  integration by parts and the Sobolev embedding, 
which yields
\begin{align}
F&\leqslant 
\mu^5F_1+F_2+F_3+F_4+
C(\|Q^{\mu}(t)\|_{H^2}, \|Q^{\nu}(t)\|_{H^3})
\|W(t)\|_{H^1}^2, 
\label{eq:9250}
\end{align}
where 
\begin{align}
F_1&:=
\Re\langle
\p_x^4W, M_a^{-1}\Lambda(Q^{\mu})W
\rangle
+\Re\langle
W, M_a^{-1}\Lambda(Q^{\mu})\p_x^4W
\rangle, 
\nonumber
\\
F_2&:=
-\Re\langle
iM_a\p_x^4W, M_a^{-1}\Lambda(Q^{\mu})W
\rangle
-\Re\langle
W, M_a^{-1}\Lambda(Q^{\mu})iM_a\p_x^4W
\rangle
\nonumber, 
\\
F_3&:=
-(\nu^5-\mu^5)
\left\{\Re\langle
\p_x^4Q^{\nu}, M_a^{-1}\Lambda(Q^{\mu})W
\rangle
+\Re\langle
W, M_a^{-1}\Lambda(Q^{\mu})\p_x^4Q^{\nu}
\rangle
\right\}, 
\nonumber
\\
F_4&:=
C\|\p_tQ^{\mu}(t)\|_{L^2}
\|Q^{\mu}(t)\|_{L^{\infty}}
\|W(t)\|_{L^{\infty}}
\|W(t)\|_{L^2}. 
\nonumber
\end{align}
Here, by \eqref{eq:B52},   
\begin{align}
F_3&
\leqslant 
C(T,\|Q_0\|_{H^m})(\nu^5-\mu^5)
\|\p_x^4Q^{\nu}(t)\|_{L^2}
\|W(t)\|_{L^2}. 
\label{eq:9251}
\end{align}
Since $Q^{\mu}$ satisfies \eqref{eq:bpde} with $\ep=\mu\in (0,1)$ and $m\geqslant 4$,    
\begin{align}
F_4&
\leqslant 
C(\|Q^{\mu}(t)\|_{H^4})
\|W(t)\|_{H^1}^2.
\label{eq:9252}
\end{align}
By integration by parts and the Young inequality, 
\begin{align}
\mu^5F_1
&\leqslant 
\dfrac{\mu^5}{4}\|\p_x^3W(t)\|_{L^2}^2
+C(\|Q^{\mu}(t)\|_{H^2})\|W(t)\|_{H^1}^2. 
\label{eq:9253}
\end{align}
Recalling $[M_a,\Lambda(Q^{\mu})]=0$ follows 
from the assumption on $M_a$, we see 
\begin{align}
M_a^{-1}\Lambda(Q^{\mu})iM_a\p_x^4
&=
M_a^{-1}M_a\Lambda(Q^{\mu})i\p_x^4W
=\Lambda(Q^{\mu})i\p_x^4W, 
\nonumber
\end{align}
and hence 
\begin{align}
F_2
&=
-\Re\langle
i\p_x^4W, \Lambda(Q^{\mu})W
\rangle
-\Re\langle
W, \Lambda(Q^{\mu})i\p_x^4W
\rangle
\nonumber
\end{align}
Further, 
repeated use of integration by parts and the Sobolev embedding 
leads to
\begin{align}
F_2
&\leqslant 
F_{21}+2F_{22}+C(\|Q^{\mu}(t)\|_{H^4})\|W(t)\|_{H^1}^2, 
\label{eq:9254}
\end{align} 
where 
\begin{align}
F_{21}&:=-\Re\langle
i\p_x^2W, \Lambda(Q^{\mu})\p_x^2W
\rangle
-\Re\langle
\p_x^2W, \Lambda(Q^{\mu})i\p_x^2W
\rangle, 
\nonumber
\\
F_{22}&:=\Re\langle
i\p_xW, \p_x(\Lambda(Q^{\mu}))\p_x^2W
\rangle
-\Re\langle
\p_xW, \p_x(\Lambda(Q^{\mu}))i\p_x^2W
\rangle. 
\nonumber
\end{align}
By a simple computation of complex numbers,    
\begin{align}
F_{21}
&=
\Re\langle 
i\Lambda(Q^{\mu})\p_x^2W, 
\p_x^2W
\rangle
-\Re\langle 
\Lambda(Q^{\mu})i\p_x^2W,
\p_x^2W
\rangle
\nonumber
\\
&=
\Re\langle
\left[
i\,I_n, \Lambda(Q^{\mu})
\right]\p_x^2W,\p_x^2W
\rangle. 
\nonumber
\end{align}
By \eqref{eq:add10284} with $Q^{\ep}$ replaced by $Q^{\mu}$ 
and $[iI_n, \Lambda_2(Q^{\mu})]=[iI_n, \Lambda_3(Q^{\mu})]=0$, 
and by 
$\Lambda(Q^{\mu})
=-\Lambda_1(Q^{\mu})+\Lambda_2(Q^{\mu})-\Lambda_3(Q^{\mu})$, 
we obtain 
\begin{align}
F_{21}
&=
\Re\langle
P_1^1(Q^{\mu})\p_x^2W, \p_x^2W
\rangle,
\label{eq:II1}
\end{align} 
where we also used $\Re\langle
P_2^1(Q^{\mu})\p_x^2W, \p_x^2W
\rangle=0$ which follows from \eqref{eq:491} for $\ell=1$ with $Q$ 
replaced by $Q^{\mu}$. 
In the same way as above, a simple computation of complex numbers shows 
\begin{align}
F_{22}
&=
-\Re\langle 
i \p_x(\Lambda(Q^{\mu}))\p_x^2W, \p_xW
\rangle
-\Re\langle
\p_x(\Lambda(Q^{\mu}))i\p_x^2W, \p_xW
\rangle
\nonumber
\\
&=
-\Re
\langle
\left\{ 
i \p_x(\Lambda(Q^{\mu}))
+
\p_x(\Lambda(Q^{\mu}))i 
\right\}
\p_x^2W, \p_xW
\rangle. 
\nonumber
\end{align}
Here, since 
\begin{align}
\p_x(\Lambda(Q^{\mu}))i
&=
i\p_x(\Lambda_1(Q^{\mu}))-\left[iI_n,\p_x(\Lambda(Q^{\mu}))\right]
=
i\p_x(\Lambda_1(Q^{\mu}))-\p_x\left(
\left[iI_n,\Lambda(Q^{\mu})\right]\right), 
\nonumber
\end{align}
we have 
$$
i \p_x(\Lambda(Q^{\mu}))
+
\p_x(\Lambda(Q^{\mu}))i
=
2i\p_x(\Lambda_1(Q^{\mu}))-\p_x\left(
\left[iI_n,\Lambda(Q^{\mu})\right]\right). 
\nonumber
$$
Further, by \eqref{eq:I12}, \eqref{eq:add10284} and 
\eqref{eq:53011}-\eqref{eq:53012} for $\ell=1$ with 
$Q^{\ep}$ replaced by $Q^{\mu}$,  
\begin{align}
i \p_x(\Lambda_1(Q^{\mu}))
+
\p_x(\Lambda_1(Q^{\mu}))i
&=
-P_3^1(Q^{\mu})-P_5^1(Q^{\mu})
+\p_x(P_1^1(Q^{\mu}))+\dfrac{1}{2}\p_x(P_2^1(Q^{\mu}))
\nonumber
\\
&=
-P_3^1(Q^{\mu}). 
\nonumber
\end{align}
On the other hand, by \eqref{eq:add10285} and 
$[iI_n,\Lambda_2(Q^{\mu})]=0$,   
\begin{align}
&
i \p_x(\Lambda_2(Q^{\mu}))
+
\p_x(\Lambda_2(Q^{\mu}))i
=
-\dfrac{1}{2}
P_3^1(Q^{\mu}). 
\nonumber
\end{align}
By the same reason as above for $\Lambda_2(Q^{\ep})$,  
\begin{align}
&
i \p_x(\Lambda_3(Q^{\mu}))
+
\p_x(\Lambda_3(Q^{\mu}))i
=
-\dfrac{1}{2}
P_3^3(Q^{\mu}). 
\nonumber
\end{align} 
Combining them, 
we obtain  
\begin{align}
i \p_x(\Lambda(Q^{\mu}))
+
\p_x(\Lambda(Q^{\mu}))i
&=
\dfrac{1}{2}P_3^1(Q^{\mu})
+
\dfrac{1}{2}P_3^3(Q^{\mu}),  
\nonumber
\end{align}
which yields 
\begin{equation}
2F_{22}
=-\Re\langle
P_3^1(Q^{\mu})\p_x^2W, 
\p_xW
\rangle
-\Re\langle
P_3^3(Q^{\mu})\p_x^2W, 
\p_xW
\rangle. 
\label{eq:II2} 
\end{equation}
Substituting \eqref{eq:II1} and \eqref{eq:II2} into 
\eqref{eq:9254}, 
we get 
\begin{align}
F_2&\leqslant 
\Re\langle
P_1^1(Q^{\mu})\p_x^2W, \p_x^2W
\rangle
-\Re\langle
P_3^1(Q^{\mu})\p_x^2W, 
\p_xW
\rangle
\nonumber
\\
&\quad 
-\Re\langle
P_3^3(Q^{\mu})\p_x^2W, 
\p_xW
\rangle
+C(\|Q^{\mu}(t)\|_{H^4})\|W(t)\|_{H^1}^2. 
\label{eq:9255}
\end{align} 
Combining 
\eqref{eq:9250}-\eqref{eq:9253}, and \eqref{eq:9255}, 
we obtain 
\begin{align}
F
&\leqslant 
\dfrac{\mu^5}{4}\|\p_x^3W(t)\|_{L^2}^2
+
\Re\langle
P_1^1(Q^{\mu})\p_x^2W, \p_x^2W
\rangle
\nonumber
\\
&\quad 
-\Re\langle
P_3^1(Q^{\mu})\p_x^2W, 
\p_xW
\rangle
-\Re\langle
P_3^3(Q^{\mu})\p_x^2W, 
\p_xW
\rangle
\nonumber
\\
&\quad 
+C(T,\|Q_0\|_{H^m})(\nu^5-\mu^5)
\|\p_x^4Q^{\nu}(t)\|_{L^2}
\|W(t)\|_{L^2}
\nonumber
\\
&\quad 
+C(\|Q^{\mu}(t)\|_{H^4}, \|Q^{\nu}(t)\|_{H^3})\|W(t)\|_{H^1}^2.
\label{eq:9256}
\end{align}
\par 
In addition, 
by \eqref{eq:Wt}, 
it is easy to obtain 
\begin{align}
\dfrac{d}{dt}
\left[\|W(t)\|_{L^2}^2\right]
&\leqslant  
C(\|Q^{\mu}(t)\|_{H^2}, \|Q^{\nu}(t)\|_{H^3})
\|W(t)\|_{H^1}^2
\nonumber
\\
&\quad 
+ 
(\nu^5-\mu^5)\|\p_x^4Q^{\nu}(t)\|_{L^2}
\|W(t)\|_{L^2}.
\label{eq:9257}
\end{align}
\par 
Adding \eqref{eq:92414},   
\eqref{eq:9256}-\eqref{eq:9257}, 
and using \eqref{eq:92415},   
we derive  
\begin{align}
\dfrac{d}{dt}
\mathcal{E}_1^{\mu,\nu}(W(t))
&\leqslant 
C(T,\|Q_0\|_{H^m})(\nu^5-\mu^5)
\|Q^{\nu}(t)\|_{H^5}\|W(t)\|_{H^1}
\nonumber
\\
&\quad 
+C(T, \|Q_0\|_{H^m}, \|Q^{\mu}(t)\|_{H^4}, \|Q^{\nu}(t)\|_{H^3})
\mathcal{E}_1^{\mu,\nu}(W(t)). 
\nonumber
\end{align}
Further, 
by applying \eqref{eq:B51} for $m=5$ 
and  \eqref{eq:6092} for $(\ep,m,\ell)=(\nu,4,1)$,   
$$
\|Q^{\nu}\|_{C([0,T];H^5)}
\leqslant 
C(T,\|Q_0\|_{H^4})\nu^{-1}\|Q_0\|_{H^4}, 
$$ 
from which and \eqref{eq:92415}, it follows that 
\begin{align}
&(\nu^5-\mu^5)
\|Q^{\nu}(t)\|_{H^5}\|W(t)\|_{H^1}
\leqslant 
C(T,\|Q_0\|_{H^m})
\nu^4
\left(\mathcal{E}^{\mu,\nu}_1(W(t))\right)^{1/2}.
\nonumber
\end{align}
From this and \eqref{eq:B52}, we obtain    
\begin{align}
\dfrac{d}{dt}
\mathcal{E}_1^{\mu,\nu}(W(t))
&\leqslant 
C(T, \|Q_0\|_{H^m})
\left\{
\mathcal{E}_1^{\mu,\nu}(W(t))
+
\nu^4
\left(\mathcal{E}^{\mu,\nu}_1(W(t))\right)^{1/2}
\right\}. 
\label{eq:u385}
\end{align} 
The Gronwall inequality and \eqref{eq:92415} implies   
\begin{align}
\|W(t)\|_{H^1}
&\leqslant
C(T,\|Q_0\|_{H^m})
(\|W(0)\|_{H^1}+\nu^4). 
\nonumber
\end{align}
By the triangle inequality 
$\|W(0)\|_{H^1}=
\|Q^{\mu}_0-Q^{\nu}_0\|_{H^1}
\leqslant 
\|Q^{\mu}_0-Q_0\|_{H^1}
+\|Q_0-Q^{\nu}_0\|_{H^1}$, 
\eqref{eq:6093} with $\ell=m-1$ and $\ep=\mu,\nu$, 
and by $0<\mu\leqslant \nu<1$, 
this shows 
\begin{align}
\|W\|_{C([0,T];H^1)}
&
\leqslant 
2C(T,\|Q_0\|_{H^m})(\nu^{m-1}+\nu^4), 
\label{eq:90312}
\end{align}
which is the desired \eqref{eq:1cauchy}.
\end{proof}
\begin{proof}[Proof of \eqref{eq:mcauchy} in Proposition~\ref{proposition:cauchy}]
For $\mu, \nu$ with 
$0<\mu \leqslant \nu<1$, 
we set $W:=Q^{\mu}-Q^{\nu}: [0,T]\times \TT\to \mathbb{C}^n$ again and 
define 
\begin{align}
 &Z^{m}
 :=\p_x^mW+M_a^{-1}\Lambda(Q^{\mu})\p_x^{m-2}W 
 \label{eq:Z_j}
 \end{align} 
where 
 $
 \Lambda(Q^{\mu})
 =-\Lambda_1(Q^{\mu})
 +\Lambda_2(Q^{\mu})
 -\Lambda_3(Q^{\mu})
 $
 and $\Lambda_k(Q^{\mu})$  for $k=1,2,3$ are respectively defined 
 by \eqref{eq:b381}-\eqref{eq:b383} with $\ep$ replaced by $\mu$. 
Further, we define 
$\mathcal{E}^{\mu,\nu}_m(W)
=\mathcal{E}^{\mu,\nu}_m(W(t)):[0,T]\to [0,\infty)$ 
to satisfy 
\begin{align}
\mathcal{E}^{\mu,\nu}_m(W(t))^2
&=
\|Z^m(t)\|_{L^2}^2+\|W(t)\|_{H^{m-1}}^2. 
\label{eq:Ekl}
\end{align}
By a similar argument to obtain \eqref{eq:eqi9231} 
and by \eqref{eq:B52}, there exists  
$C_3(T,\|Q_0\|_{H^m})>0$ 
which is independent of $\mu$ and $\nu$ such that 
 \begin{align}
 &\frac{\|W(t)\|_{H^m}^2}{C_3(T,\|Q_0\|_{H^m})}
 \leqslant 
 \mathcal{E}^{\mu,\nu}_m(W(t))^2
 \leqslant 
 C_3(T,\|Q_0\|_{H^m})\|W(t)\|_{H^m}^2
 \quad 
 \text{on $[0,T]$.}
 \label{eq:H4Em}
 \end{align}
\par 
Subtracting \eqref{eq:b384} with $\ep=\nu$ from 
that with $\ep=\mu$, 
and using \eqref{eq:B52}, 
we deduce 
\begin{align}
\p_t\p_x^mW
&=
P(Q^{\mu})\p_x^mW
+(\nu^5-\mu^5) \p_x^{m+4}Q^{\nu}
\nonumber
\\
&\quad 
+O\left(
|W||\p_x^{m+2}Q^{\nu}|
\right)
+
O\left(
(|\p_xW|+|W|)|\p_x^{m+1}Q^{\nu}|
\right)
+\mathcal{R}_{\leqslant m}^{1}, 
\label{eq:9258}
\end{align}
where $P(Q^{\mu})$ is given by \eqref{eq:loc1} with 
$\ep=\mu$ and 
$$
\|\mathcal{R}_{\leqslant m}^{1}(t)\|_{L^2}
\leqslant 
C(T,\|Q_0\|_{H^m})\|W(t)\|_{H^m}.
$$
In the same way as we obtain \eqref{eq:c9232}, 
we use
\eqref{eq:9258} 
and $[M_a, \Lambda(Q^{\mu})]=0$, which shows 
\begin{align}
&\p_tZ^m
=
P(Q^{\mu})Z^m
-
\left[
i\p_x^4,\Lambda(Q^{\mu})\p_x^{-2}
\right]\p_x^mW
+
\mu^5M_a^{-1}
\left[
\p_x^4,\Lambda(Q^{\mu})\p_x^{-2}
\right]\p_x^mW
\nonumber 
\\
&\quad \phantom{\p_tZ^m}
+(\nu^5-\mu^5)
\left\{
\p_x^{m+4}Q^{\nu}+
M_a^{-1}\Lambda(Q^{\mu})\p_x^{m+2}Q^{\nu}
\right\}
\nonumber
\\
&\quad \phantom{\p_tZ^m}
+O\left(
|\p_tQ^{\mu}|
|Q^{\mu}|
|\p_x^{m-2}W|
\right)
+
O\left(
|W||\p_x^{m+2}Q^{\nu}|
\right)
\nonumber
\\
&\quad \phantom{\p_tZ^m}
+
O\left(
(|\p_xW|+|W|)|\p_x^{m+1}Q^{\nu}|
\right)
+\mathcal{R}_{\leqslant m}^{2},  
\nonumber 
\\
&\|\mathcal{R}_{\leqslant m}^{2}(t)\|_{L^2}
\leqslant 
C(T,\|Q_0\|_{H^m})\|W(t)\|_{H^m}.
\nonumber
\end{align}
Here, $\left[
i\p_x^4,\Lambda(Q^{\mu})\p_x^{-2}
\right]\p_x^mW$ is given by \eqref{eq:c9233} 
with $(\ep,U^{\ep})$ replaced by $(\mu,\p_x^mW)$
under $(e_1,e_2,e_3)=(-1,1,-1)$, 
which cancels out $P_1^1(Q^\mu)\p_x^2Z^m$ and 
$P_{3,m}(Q^{\mu})\p_xZ^m$ included in $P(Q^{\mu})Z^m$. 
By noting this and by applying \eqref{eq:491}-\eqref{eq:391} in 
Proposition~\ref{proposition:symskewsym} with $Q$ replaced by 
$Q^{\mu}$, it is obvious to obtain
\begin{align}
\frac{1}{2}\dfrac{d}{dt}
\|Z^m(t)\|_{L^2}^2
&\leqslant 
C(T,\|Q_0\|_{H^m})\mathcal{E}^{\mu,\nu}_m(W(t))^2
\nonumber
\\
&\quad 
+C(T,\|Q_0\|_{H^m})
\|W(t)\|_{H^1}
\|Q^{\nu}(t)\|_{H^{m+2}}
\|Z^m(t)\|_{L^2}
\nonumber
\\
&\quad
+C(T,\|Q_0\|_{H^m})
(\nu^5-\mu^5)
\|Q^{\nu}(t)\|_{H^{m+4}}
\|Z^m(t)\|_{L^2}.
\label{eq:9016}
\end{align} 
Further, by combining \eqref{eq:B51} with $m$ replaced by $m+j$ for 
$j=1,2,\ldots$ and \eqref{eq:6092},  
\begin{align}
\|Q^{\nu}\|_{C([0,T];H^{m+j})}
&\leqslant 
C(T,\|Q_0\|_{H^m})
\nu^{-j}
\quad 
(j=1,2,\ldots).
\label{eq:90162}
\end{align}
Combining \eqref{eq:90162} for $j=2$ and  
\eqref{eq:90312} shows 
\begin{align}
&\|W(t)\|_{H^1}
\|Q^{\nu}(t)\|_{H^{m+2}}
\leqslant 
C(T,\|Q_0\|_{H^m})
(\nu^{(m-1)-2}+\nu^{4-2}).
\label{eq:9017}
\end{align}
In addition, \eqref{eq:90162} for $j=4$ yields  
\begin{align}
(\nu^5-\mu^5)
\|Q^{\nu}(t)\|_{H^{m+4}}
&\leqslant 
C(T,\|Q_0\|_{H^m})
\nu.
\label{eq:90172}
\end{align}
Applying \eqref{eq:9017}-\eqref{eq:90172} to 
\eqref{eq:9016}, 
and then using \eqref{eq:H4Em} and $\nu\in (0,1)$, 
we derive 
\begin{align}
\frac{d}{dt}\|Z^m(t)\|_{L^2}^2
\leqslant 
C(T,\|Q_0\|_{H^m})
\left\{
\mathcal{E}^{\mu,\nu}_m(W(t))^2
+
(\nu^{m-3}+\nu)
\mathcal{E}^{\mu,\nu}_m(W(t))
\right\}. 
\nonumber
\end{align} 
The time-derivative of $\|W(t)\|_{H^{m-1}}^2$ 
can be also bounded by 
the right hand side of the above by a 
positive constant depending on $T$ and $\|Q_0\|_{H^m}$.  
Therefore 
\begin{align}
&\frac{d}{dt}
\mathcal{E}^{\mu,\nu}_m(W)^2
\leqslant 
C(T,\|Q_0\|_{H^m})
\left\{
\mathcal{E}^{\mu,\nu}_m(W)^2
+
(\nu^{m-3}+\nu)
\mathcal{E}^{\mu,\nu}_m(W)
\right\}
\nonumber 
\end{align} 
holds on $[0,T]$. 
By the Gronwall inequality 
and \eqref{eq:H4Em}, 
this shows   
$$
\|W\|_{C([0,T];H^m)}
\leqslant 
C(T,\|Q_0\|_{H^m})
\left(
\nu^{m-3}
 +\nu
+\|W(0)\|_{H^m}
\right),  
$$
which is the desired \eqref{eq:mcauchy}.
\end{proof}
\section{Proof of Theorem~\ref{theorem:lwp} and Corollary~\ref{cor:cor930}}
\label{section:prooflw}
This section completes the proof of Theorem~\ref{theorem:lwp} and 
Corollary~\ref{cor:cor930}.
\begin{proof}[Proof of Theorem~\ref{theorem:lwp}]
Let $Q_0\in H^m(\RR;\mathbb{C}^n)$ with an integer $m\geqslant 4$. 
From the time-reversibility of \eqref{eq:apde}, 
it suffices to solve \eqref{eq:apde}-\eqref{eq:adata} 
in positive time-direction. 
\par 
We show time-local existence of a solution. 
Let $\left\{Q_0^{\ep}\right\}_{\ep\in (0,1)}$ be the 
Bona-Smith approximation of $Q_0$ introduced in 
Section~\ref{section:local}. 
For $\mu$ and $\nu$ with $0<\mu\leqslant \nu<1$, 
let $Q^{\mu}$ and $Q^{\nu}$ be given in Section~\ref{section:BS}. 
Let $T=T(\|Q_0\|_{H^4})>0$ be given by \eqref{eq:extime} independently of $\mu$ and $\nu$.
Combining \eqref{eq:mcauchy} and the triangle inequality yields
\begin{align}
&\|Q^{\mu}-Q^{\nu}\|_{C([0,T];H^m)}
\leqslant 
C\left(
\nu^{m-3}+\nu
+\|Q_0^{\mu}-Q_0\|_{H^m}
+\|Q_0-Q_0^{\nu}\|_{H^m}
\right),
\nonumber
\end{align}
where $C>0$ is a constant depending on $T$ and $\|Q_0\|_{H^m}$ 
but not on $\mu$ and $\nu$.  
By the convergence $Q_0^{\ep}\to Q_0$ in $H^m$ 
as $\ep\downarrow 0$ 
and $m\geqslant 4$, this shows that 
$\left\{Q^{\mu}\right\}_{\mu\in (0,1)}$
is Cauchy in $C([0,T];H^m(\TT;\mathbb{C}^n))$, 
and thus there exists its limit $Q:=\displaystyle\lim_{\mu \downarrow 0}Q^{\mu}$ in $C([0,T];H^m(\TT;\mathbb{C}^n))$. 
It is not difficult to prove that $Q$  
actually satisfies \eqref{eq:apde}-\eqref{eq:adata}. 
\par 
We show uniqueness of the solution. 
Let $Q^1, Q^2\in C([0,T];H^4(\TT;\mathbb{C}^n))
$
be solutions to \eqref{eq:apde} with 
$Q^1(0,x)=Q^2(0,x)$.
Set 
$W:=Q^1-Q^2$.
We define 
\begin{align}
 &\mathcal{E}(W(t))
=
 \dfrac{1}{2}\|\p_xW(t)\|_{L^2}^2
 +A\|W(t)\|_{L^2}^2
 -\Re\langle
 W(t), M_a^{-1}\Lambda(Q^{1})W(t)
 \rangle, 
 \label{eq:p5313}
 \end{align}
 where 
 $\Lambda(Q^{1}) 
 =-\Lambda_1(Q^{1})+\Lambda_2(Q^{1})
 -\Lambda_3(Q^{1})$  
 and 
$\Lambda_k(Q^1)$ for $k=1,2,3$ are defined by \eqref{eq:u381}-\eqref{eq:u383} 
with $\mu$ replaced by $1$.  
 Moreover, $A=2C_1(\|Q^1\|_{C([0,T];H^4)})$ and 
 $C_1(\|Q^1\|_{C([0,T];H^4)})>1$ 
 is taken to be a sufficiently large constant which satisfies 
 \begin{align}
 \frac{1}{2}\|W(t)\|_{H^1}^2
 &\leqslant 
 \mathcal{E}(W(t))
 \leqslant 
 3 C_1(\|Q^1\|_{C([0,T];H^4)})
 \|W(t)\|_{H^1}^2
 \quad 
\text{on $[0,T]$.}
\label{eq:p92415}
\end{align} 
We can show \eqref{eq:92414}, \eqref{eq:9256} and 
\eqref{eq:9257} respectively with 
$(\mu,\nu,Q^{\mu},Q^{\nu})$ 
replaced by 
$(0,0,Q^{1},Q^{2})$.  
The computation to show them  
is formally the same as that 
we demonstrate in the previous section, 
which can be made rigorous since 
\begin{align}
 &\p_xW\in 
 C([0,T];H^3(\TT;\mathbb{C}^n))
 \cap C^1([0,T];H^{-1}(\TT;\mathbb{C}^n)). 
 \nonumber
 \end{align} 
Combining them and using \eqref{eq:p92415}, 
we obtain 
\begin{align}
\frac{d}{dt}
\mathcal{E}(W(t))
&\leqslant 
C(\|Q^{1}\|_{C([0,T];H^4)}, \|Q^{2}\|_{C([0,T];H^3)})
\mathcal{E}(W(t)). 
\nonumber
\end{align}
Since $Q^1(0,x)=Q^2(0,x)$, this combined with \eqref{eq:p92415}
shows 
$W=0$ on $[0,T]\times \TT$, which is the desired result.
\par 
Continuous dependence of the solution  
with respect to the initial data 
can be proved by using the standard techniques of Bona and Smith. 
More concretely, we can prove it by following, e.g., 
\cite[Section~4.3]{segata} and \cite[Section~7]{onodera6}, 
which is essentially based on 
the estimates in Proposition~\ref{proposition:cauchy} with $m\geqslant 4$. 
We omit the detail. 
\end{proof}
\begin{proof}[Proof of Corollary~\ref{cor:cor930}]
It suffices to show \eqref{eq:comm930}  
by using the additional (B7)-(B9) under (A1)-(A2) 
(without using 
$M_a=aI_n$ assumed in the proof of Theorem~\ref{theorem:lwp}.) 
\par 
We consider $\Lambda(Q^{\ep})=-\Lambda_1(Q^{\ep})+\Lambda_2(Q^{\ep})-\Lambda_3(Q^{\ep})$ 
defined by \eqref{eq:b381}-\eqref{eq:b383}.  
We can rewrite as follows:  
\begin{align}
&\Lambda_1(Q^{\ep})v=A_1(Q^{\ep})v+B_1(Q^{\ep})\overline{v}, 
\quad 
\Lambda_{\ell}(Q^{\ep})v=A_{\ell}(Q^{\ep})v \quad (\ell=2,3), 
\nonumber
\end{align}
for any $v=v(t,x): [0,T_{\ep}]\times \TT\to \mathbb{C}^n$ ,where 
\begin{align}
A_1(Q^{\ep})
&=
-\dfrac{1}{2}\left(\lambda^1_{jk}\right),  \ 
B_1(Q^{\ep})=\dfrac{1}{2}\left(\mu^1_{jk}\right), \ 
A_{\ell}(Q^{\ep})
=-\dfrac{1}{4}\left(\lambda^{\ell}_{jk}\right)
\ \  (\ell=2,3). 
\nonumber
\end{align}
Here $\left(\lambda^1_{jk}\right)$ denotes 
an $n\times n$
complex-matrix-valued function whose $(j,k)$-component is $\lambda^1_{jk}$, 
and the same applies to other 
$\left(\lambda^{\ell}_{jk}\right)$ 
and 
$\left(\mu^1_{jk}\right)$. 
A simple computation shows 
\begin{align}
\lambda^1_{jk}
&=
\sum_{q,r=1}^nS_{k,q,r}^{1,j}\overline{Q_q^{\ep}}Q_r^{\ep},\  
\mu_{jk}^1
=
\sum_{p,r=1}^nS_{p,k,r}^{1,j}Q_p^{\ep}Q_r^{\ep}, \ 
\lambda^2_{jk}
=
\sum_{p,q=1}^nS_{p,q,k}^{1,j}Q_p^{\ep}\overline{Q_q^{\ep}}, 
\nonumber
\\
\lambda^3_{jk}
&=
\sum_{p,q=1}^n
\left\{
S_{p,q,k}^{3,j}-(m-1)\left(
S_{p,q,k}^{1,j}-2S_{p,q,k}^{2,j}
\right)
\right\}Q_p^{\ep}\overline{Q_q^{\ep}}. 
\nonumber
\end{align}
Noting $\overline{M_av}=M_a\overline{v}$, we see 
\begin{align}
\left[
M_a,\Lambda(Q^{\ep})
\right]v
&=
-\left[
M_a, A_1(Q^{\ep})-A_2(Q^{\ep})+A_3(Q^{\ep})
\right]v
-
\left[
M_a, B_1(Q^{\ep})
\right]\overline{v}. 
\nonumber
\end{align}
Moreover, if we write 
$A_1(Q^{\ep})-A_2(Q^{\ep})+A_3(Q^{\ep})=
\left(\nu_{jk}\right)$, 
then 
\begin{align}
\nu_{jk}
&=
\dfrac{1}{4}
\sum_{p,q=1}^n
\left\{
S_{p,q,k}^{1,j}
-2S_{k,q,p}^{1,j}
-S_{p,q,k}^{3,j}+(m-1)\left(
S_{p,q,k}^{1,j}-2S_{p,q,k}^{2,j}
\right)
\right\}Q_p^{\ep}\overline{Q_q^{\ep}}.
\end{align}
This shows both $B_1(Q^{\ep})$ and 
$A_1(Q^{\ep})-A_2(Q^{\ep})+A_3(Q^{\ep})$ 
become diagonal and hence 
$\left[
M_a,\Lambda(Q^{\ep})
\right]=0$ holds, 
provided that all the following conditions are satisfied: 
\begin{align}
&S_{p,k,r}^{1,j}
=0
\quad 
\text{unless $j=k$} 
\quad 
\text{for all $p,r,j,k\in \left\{1,\ldots,n\right\}$}, 
\label{eq:add9301}
\\
&S_{p,q,k}^{1,j}-2S_{k,q,p}^{1,j}-S_{p,q,k}^{3,j}=0
\quad 
\text{unless $j=k$} 
\quad 
\text{for all $p,q,j,k\in \left\{1,\ldots,n\right\}$}, 
\label{eq:add9302}
\\
&S_{p,q,k}^{1,j}-2S_{p,q,k}^{2,j}=0
\quad 
\text{unless $j=k$} 
\quad 
\text{for all $p,q,j,k\in \left\{1,\ldots,n\right\}$}.  
\label{eq:add9303}
\end{align}
Here, by \eqref{eq:S1}, we see \eqref{eq:add9301} holds if and only if 
\begin{align}
&\omega_{k,p,r}^{2,j}
=0
\quad 
\text{unless $j=k$} 
\quad 
\text{for all $p,r,j,k\in \left\{1,\ldots,n\right\}$}, 
\nonumber 
\end{align}
which is nothing but (B8). 
Moreover, by \eqref{eq:S1} and \eqref{eq:S2}, 
$$
S_{p,q,k}^{1,j}-2S_{p,q,k}^{2,j}=
i\left(
\omega_{q,p,k}^{2,j}+\omega_{k,q,p}^{1,j}-\omega_{q,k,p}^{2,j}
\right)
=i\omega_{k,q,p}^{1,j}
$$
holds under (A1). This implies that \eqref{eq:add9303} 
is equivalent to (B7) under (A1). 
Furthermore, under (A1) and (A2), we use \eqref{eq:S1}-\eqref{eq:S3} 
to deduce 
\begin{align}
&S_{p,q,k}^{1,j}-2S_{k,q,p}^{1,j}-S_{p,q,k}^{3,j}
\nonumber
\\
&=
i\omega_{q,p,k}^{2,j}
-2i\omega_{q,k,p}^{2,j}
-i\left(
\omega_{k,q,p}^{1,j}-\omega_{q,k,p}^{2,j}
\right)
+\dfrac{i}{2}
\left(
2\omega_{k,q,p}^{1,j}+\omega_{k,q,p}^{3,j}+\omega_{k,p,q}^{4,j}+\omega_{p,k,q}^{4,j}
\right)
\nonumber
\\
&=
i\left(
\omega_{q,p,k}^{2,j}
-\omega_{q,k,p}^{2,j}
+\dfrac{1}{2}\omega_{k,q,p}^{3,j}+\dfrac{1}{2}\omega_{k,p,q}^{4,j}
+\dfrac{1}{2}\omega_{p,k,q}^{4,j}
\right)
\nonumber
\\
&=
i\left(
\dfrac{1}{2}\omega_{k,q,p}^{3,j}+\omega_{k,p,q}^{4,j}
\right).
\nonumber
\end{align}
This shows \eqref{eq:add9302} is equivalent to (B9) under (A1)-(A2). 
Therefore, we conclude that $[M_a,\Lambda(Q^{\ep})]=0$ follows 
from (B7)-(B9) under (A1)-(A2). 
\par 
Since $\Lambda(Q^{\mu})$ and $\Lambda(Q^{1})$ 
have essentially the same structure as that of $\Lambda(Q^{\ep})$, 
no other conditions are required to ensure   
$[M_a,\Lambda(Q^{\mu})]=0$ and $[M_a,\Lambda(Q^{1})]=0$.  
This completes the proof of Corollary~\ref{cor:cor930}. 
\end{proof}
\section{Example~\ref{ex:3}}
\label{section:examples}
Let $n=2$ and let $M_a=aI_2$ with $0\ne a\in \RR$ 
and $\lambda_1,\lambda_2\in \RR$.  
Then \eqref{eq:apde} becomes  
\begin{alignat}{2}
 \left(
 \p_t-i\, a\, \p_x^4-i\, \lambda_j \p_x^2
 \right)Q_j
 &=F_j(Q, \p_xQ, \p_x^2Q) 
 \quad 
 (j=1,2),  
\label{eq:21apde}
\end{alignat}
for 
$Q={}^t(Q_1,Q_2)(t,x):\RR\times \TT\to \mathbb{C}^2$, 
where $F_j(Q, \p_xQ, \p_x^2Q)$ for $j=1,2$ are given by 
\eqref{eq:b21} with $n=2$. 
We shall characterize  
$\left\{\omega_{p,q,r}^{k,j} \mid p,q,r,j\in \left\{1,2\right\}
  \right\}
$
for $k\in \left\{1,\ldots,4\right\}$ 
under \textup{(A1)-(A2)} 
and 
\textup{(B1)-(B6)}. 
By Proposition~\ref{proposition:BC}, 
it suffices to consider under 
\textup{(A1)-(A2)} 
and 
\textup{(C1)-(C4)}. 
We set $T_{p,q,r}^{k,j}:=if_k(p,q,r,j)$ and  
\begin{align}
T^k
&:=
\left(
\begin{array}{cc|cc} 
  T_{1,1,1}^{k,1} & T_{1,1,2}^{k,1} & T_{2,1,1}^{k,1}  & T_{2,1,2}^{k,1} \\ 
  T_{1,2,1}^{k,1} & T_{1,2,2}^{k,1} & T_{2,2,1}^{k,1}  & T_{2,2,2}^{k,1} \\ \hline
  T_{1,1,1}^{k,2} & T_{1,1,2}^{k,2} & T_{2,1,1}^{k,2}  & T_{2,1,2}^{k,2} \\ 
  T_{1,2,1}^{k,2} & T_{1,2,2}^{k,2} & T_{2,2,1}^{k,2}  & T_{2,2,2}^{k,2} 
\end{array}
\right)
\quad 
(k\in \left\{1,\ldots,4\right\}). 
\nonumber
\end{align}
\par 
Assume that all (C1)-(C4) are satisfied.  
Then $if_k(p,q,r,j)\in \Gamma_1\cap \Gamma_2$, 
and hence 
\begin{align}
T_{p,q,r}^{k,j}
&=if_k(p,q,r,j)=if_k(r,q,p,j)
=T_{r,q,p}^{k,j}
\quad 
\text{for all $p,q,r,j\in \left\{1,2\right\}$}, 
\label{eq:add9291}
\\
T_{p,q,r}^{k,j}
&=if_k(p,q,r,j)=\overline{if_k(j,r,q,p)}
=\overline{T_{j,r,q}^{k,p}}
\quad 
\text{for all $p,q,r,j\in \left\{1,2\right\}$} 
\label{eq:add9292}
\end{align}
for all $k\in \left\{1,\ldots,4\right\}$. 
This shows that   
the second column of $T^k$ coincides with the third one, 
and $T^k$ is Hermitian ($\overline{{}^{t}(T^k)}=T^k$). 
Moreover, \eqref{eq:add9291}-\eqref{eq:add9292} yields 
$T_{p,q,r}^{k,j}=\overline{T_{j,r,q}^{k,p}}
=\overline{T_{q,r,j}^{k,p}}=T_{p,j,r}^{k,q}$, and thus 
\begin{align}
T_{p,q,r}^{k,j}
&=T_{p,j,r}^{k,q}
\quad 
\text{for all $p,q,r,j\in \left\{1,2\right\}$}. 
\label{eq:add9294}
\end{align} 
This shows the second row of $T^k$ coincides with the third one. 
By \eqref{eq:add9291}-\eqref{eq:add9294}, 
we can express  
\begin{align}
T^k
&=
\left(
\begin{array}{cc|cc} 
  -\kappa_k & -\overline{\alpha_k} & -\overline{\alpha_k}  & -\overline{\gamma_k} \\ 
  -\alpha_k & -\tau_k & -\tau_k  & -\overline{\beta_k} \\ \hline
  -\alpha_k & -\tau_k & -\tau_k  &  -\overline{\beta_k} \\ 
  -\gamma_k & -\beta_k & -\beta_k  & -\sigma_k 
\end{array}
\right)
\quad 
\text{($k\in \left\{1,\ldots,4\right\}$)}
\label{eq:Tk}
\end{align}
by using independent constants 
$\alpha_k, \beta_k, \gamma_k\in \mathbb{C}$ 
and $\kappa_k,\tau_k,\sigma_k\in \RR$.  
\par 
Recalling 
$\omega_{p,q,r}^{1,j}=-iT_{p,q,r}^{1,j}$,  
$\omega_{q,p,r}^{2,j}=-iT_{p,q,r}^{2,j}$, 
$\omega_{p,q,r}^{3,j}=-iT_{p,q,r}^{3,j}$,  
$\omega_{p,r,q}^{4,j}=-iT_{p,q,r}^{4,j}$
follow from the definition of $T_{p,q,r}^{k,j}$, 
and using \eqref{eq:Tk}, we derive 
\begin{align}
&\left(
\begin{array}{cc|cc} 
  \omega_{1,1,1}^{k,1} & \omega_{1,1,2}^{k,1} & \omega_{2,1,1}^{k,1}  & \omega_{2,1,2}^{k,1} \\ 
  \omega_{1,2,1}^{k,1} & \omega_{1,2,2}^{k,1} & \omega_{2,2,1}^{k,1}  & \omega_{2,2,2}^{k,1} \\ \hline
  \omega_{1,1,1}^{k,2} & \omega_{1,1,2}^{k,2} & \omega_{2,1,1}^{k,2}  & \omega_{2,1,2}^{k,2} \\ 
  \omega_{1,2,1}^{k,2} & \omega_{1,2,2}^{k,2} & \omega_{2,2,1}^{k,2}  & \omega_{2,2,2}^{k,2} 
\end{array}
\right) 
=
-i
\left(
\begin{array}{cc|cc} 
  -\kappa_k & -\overline{\alpha_k} & -\overline{\alpha_k}  & -\overline{\gamma_k} \\ 
  -\alpha_k & -\tau_k & -\tau_k  & -\overline{\beta_k} \\ \hline
  -\alpha_k & -\tau_k & -\tau_k  &  -\overline{\beta_k} \\ 
  -\gamma_k & -\beta_k & -\beta_k  & -\sigma_k 
\end{array}
\right) 
\quad 
(k=1,3),
\label{eq:b3511}
\\
&\left(
\begin{array}{cc|cc} 
  \omega_{1,1,1}^{2,1} & \omega_{1,1,2}^{2,1} & \omega_{1,2,1}^{2,1}  & \omega_{1,2,2}^{2,1} \\ 
  \omega_{2,1,1}^{2,1} & \omega_{2,1,2}^{2,1} & \omega_{2,2,1}^{2,1}  & \omega_{2,2,2}^{2,1} \\ \hline
  \omega_{1,1,1}^{2,2} & \omega_{1,1,2}^{2,2} & \omega_{1,2,1}^{2,2}  & \omega_{1,2,2}^{2,2} \\ 
  \omega_{2,1,1}^{2,2} & \omega_{2,1,2}^{2,2} & \omega_{2,2,1}^{2,2}  & \omega_{2,2,2}^{2,2} 
\end{array}
\right) 
=
-i
\left(
\begin{array}{cc|cc} 
  -\kappa_2 & -\overline{\alpha_2} & -\overline{\alpha_2}  & -\overline{\gamma_2} \\ 
  -\alpha_2 & -\tau_2 & -\tau_2  & -\overline{\beta_2} \\ \hline
  -\alpha_2 & -\tau_2 & -\tau_2  &  -\overline{\beta_2} \\ 
  -\gamma_2 & -\beta_2 & -\beta_2  & -\sigma_2 
\end{array}
\right), 
\label{eq:b3512}
\\
&\left(
\begin{array}{cc|cc} 
  \omega_{1,1,1}^{4,1} & \omega_{1,2,1}^{4,1} & \omega_{2,1,1}^{4,1}  & \omega_{2,2,1}^{4,1} \\ 
  \omega_{1,1,2}^{4,1} & \omega_{1,2,2}^{4,1} & \omega_{2,1,2}^{4,1}  & \omega_{2,2,2}^{4,1} \\ \hline
  \omega_{1,1,1}^{4,2} & \omega_{1,2,1}^{4,2} & \omega_{2,1,1}^{4,2}  & \omega_{2,2,1}^{4,2} \\ 
  \omega_{1,1,2}^{4,2} & \omega_{1,2,2}^{4,2} & \omega_{2,1,2}^{4,2}  & \omega_{2,2,2}^{4,2} 
\end{array}
\right) 
=
-i
\left(
\begin{array}{cc|cc} 
  -\kappa_4 & -\overline{\alpha_4} & -\overline{\alpha_4}  & -\overline{\gamma_4} \\ 
  -\alpha_4 & -\tau_4 & -\tau_4  & -\overline{\beta_4} \\ \hline
  -\alpha_4 & -\tau_4 & -\tau_4  &  -\overline{\beta_4} \\ 
  -\gamma_4 & -\beta_4 & -\beta_4  & -\sigma_4 
\end{array}
\right). 
\label{eq:b3521}
\end{align}
\par 
Conversely, if
$\left\{\omega_{p,q,r}^{k,j} \mid p,q,r,j\in \left\{1,2\right\}
  \right\}
$
for $k\in \left\{1,\ldots,4\right\}$ are given by
\eqref{eq:b3511}-\eqref{eq:b3521}, 
then all the conditions 
(C1)-(C4) and (A1)-(A2) are obviously satisfied. 
\begin{remark}
\label{remark:add930}
The additional  \textup{(B7)-(B9)} 
imposed in Corollary~\ref{cor:cor930} to discuss the case $M_a\ne aI_n$
seems to be rather strong. 
Indeed, only by imposing \textup{(B7)-(B8)}, 
\eqref{eq:b3511} for $k=1$ and \eqref{eq:b3512} 
respectively reduce to 
\begin{align}
&\left(
\begin{array}{cc|cc} 
  \omega_{1,1,1}^{1,1} & \omega_{1,1,2}^{1,1} & \omega_{2,1,1}^{1,1}  & \omega_{2,1,2}^{1,1} \\ 
  \omega_{1,2,1}^{1,1} & \omega_{1,2,2}^{1,1} & \omega_{2,2,1}^{1,1}  & \omega_{2,2,2}^{1,1} \\ \hline
  \omega_{1,1,1}^{1,2} & \omega_{1,1,2}^{1,2} & \omega_{2,1,1}^{1,2}  & \omega_{2,1,2}^{1,2} \\ 
  \omega_{1,2,1}^{1,2} & \omega_{1,2,2}^{1,2} & \omega_{2,2,1}^{1,2}  & \omega_{2,2,2}^{1,2} 
\end{array}
\right) 
=
-i
\left(
\begin{array}{cc|cc} 
  -\kappa_1 & 0 & 0  & 0 \\ 
  0 & 0 & 0  & 0 \\ \hline
  0 & 0 & 0  &  0 \\ 
  0 & 0 & 0  & -\sigma_1 
\end{array}
\right), 
\nonumber
\\
&\left(
\begin{array}{cc|cc} 
  \omega_{1,1,1}^{2,1} & \omega_{1,1,2}^{2,1} & \omega_{1,2,1}^{2,1}  & \omega_{1,2,2}^{2,1} \\ 
  \omega_{2,1,1}^{2,1} & \omega_{2,1,2}^{2,1} & \omega_{2,2,1}^{2,1}  & \omega_{2,2,2}^{2,1} \\ \hline
  \omega_{1,1,1}^{2,2} & \omega_{1,1,2}^{2,2} & \omega_{1,2,1}^{2,2}  & \omega_{1,2,2}^{2,2} \\ 
  \omega_{2,1,1}^{2,2} & \omega_{2,1,2}^{2,2} & \omega_{2,2,1}^{2,2}  & \omega_{2,2,2}^{2,2} 
\end{array}
\right) 
=
-i
\left(
\begin{array}{cc|cc} 
  -\kappa_2 & 0 & 0  & -\overline{\gamma_2} \\ 
  0 & 0 & 0  & 0 \\ \hline
  0 & 0 & 0  &  0 \\ 
  -\gamma_2 & 0 & 0  & -\sigma_2 
\end{array}
\right).
\nonumber
\end{align}
\end{remark}
%
%
%
\section{Relevance to a geometric dispersive PDE}
\label{section:GB}
We state the background of our results with relevance to the study on 
\eqref{eq:pde} 
for curve flows on a compact locally Hermitian symmetric space $(N,J,h)$ of complex dimension $n$.  
\footnote{
The definition of $R$ is adopted here to be   
$
R(V_1,V_2)V_3:=\nabla_{V_1}\nabla_{V_2}V_3
-\nabla_{V_2}\nabla_{V_1}V_3-\nabla_{[V_1,V_2]}V_3
$ 
for any vector fields $V_1, V_2,V_3$ on $N$ where $[V_1,V_2]:=V_1V_2-V_2V_1$.
}
Geometrically,  \eqref{eq:pde} describes
the relationship among elements of $\Gamma(u^{-1}TN)$, 
where $\Gamma(u^{-1}TN)$ denotes the set of 
sections of the pull-back bundle $u^{-1}TN$ induced by $u$. 
For the details on the origin of \eqref{eq:pde}, 
see \cite{onodera4,onodera5,onoderamomo} and references therein. 
\subsection{PDE satisfied by $\nabla_x^mu_x$.}
\label{subsection:GB1}
The initial value problem 
for \eqref{eq:pde} on $\TT$  
was investigated in \cite{onodera4,onodera5}.  
In particular, \cite{onodera4} ensures   
it possesses a time-local solution for any initial data $u_0$ with   
$u_{0x}$ being in a geometric Sobolev space $H^m(\TT;TN)$ with integer
$m\geqslant 4$, 
where observation of the equation 
satisfied by $\nabla_x^mu_x$ played a crucial role.  
We review it briefly: 
Let $u:[-T,T]\times \TT\to N$ be a sufficient smooth solution to 
\eqref{eq:pde}.    
Then, a basic computation in K\"ahler geometry
shows 
\begin{align}
&\left(\nabla_t-
a\,J_u\nabla_x^4
-\lambda\,J_u\nabla_x^2
\right)
\nabla_x^mu_x
\nonumber
\\
&=
d_{1,m}\,R(\nabla_x^2\nabla_x^mu_x,J_uu_x)u_x
+d_{2,m}\,\nabla_x\left\{
R(J_uu_x,u_x)\nabla_x\nabla_x^mu_x
\right\}
\nonumber
\\
&\quad
+d_{3,m}\,R(J_u\nabla_xu_x, u_x)\nabla_x\nabla_x^mu_x
+d_{4,m}\,R(\nabla_xu_x,u_x)J_u\nabla_x\nabla_x^mu_x
\nonumber
\\
&\quad
+d_{5,m}\left\{
R(\nabla_x\nabla_x^mu_x,\nabla_xu_x)J_uu_x+
R(\nabla_x\nabla_x^mu_x,J_uu_x)\nabla_xu_x
\right\}
+r_m 
\label{eq:aya02}
\end{align}
where $d_{1,m}, \ldots,d_{5,m}$ are real constants depending on $m$, 
and $r_m=r_m(t,x)$ satisfies 
$$\|r_m(t)\|_{L^2(\TT;TN)}\leqslant C(\|u_x(t)\|_{H^3(\TT;TN)})\|u_x(t)\|_{H^m(\TT;TN)}
\quad 
\text{for $t\in [-T,T]$.}
$$ 
The classical energy estimate 
for 
$$\|u_x(t)\|_{H^m(\TT;TN)}^2
=\sum_{k=0}^m\|\nabla_x^ku_x(t)\|_{L^2(\TT;TN)}^2
=\sum_{k=0}^m\int_{0}^{2\pi}h\left(
\nabla_x^ku_x(t),\nabla_x^ku_x(t)
\right)dx
$$ 
involves 
loss of derivatives that comes only from the terms 
$R(\nabla_x^2\nabla_x^mu_x,J_uu_x)u_x$  
and 
$R(J_u\nabla_xu_x, u_x)\nabla_x\nabla_x^mu_x$ 
in the right hand side of \eqref{eq:aya02}. 
The difficulty was overcome 
by estimating $\|u_x\|_{H^{m-1}(\TT;TN)}^2+\|V_m(t)\|_{L^2(\TT;TN)}^2$
in place of  $\|u_x(t)\|_{H^m(\TT;TN)}^2$, 
where 
\begin{align}
V_m&=
\nabla_x^mu_x+\dfrac{d_{1,m}}{2a}R(\nabla_x^{m-2}u_x,u_x)u_x
+\dfrac{d_{1,m}-d_{3,m}}{8a}R(J_uu_x,u_x)J_u\nabla_x^{m-2}u_x. 
\label{eq:gaya02}
\end{align}
The transformation $\nabla_x^mu_x\mapsto V_m$ behaves as 
a summation of the identity and a pseudodifferential operator of order $-2$, and the commutator with $aJ_u\nabla_x^4$  
cancels out the loss of derivatives.
Additionally,  
$\nabla R=0$ and the K\"ahlarity $\nabla J=0$  
that characterize $(N,J,h)$ as a locally Hermitian symmetric space 
prevents  
other terms with loss of derivatives and 
the worst term of the form 
$(\nabla R)(u_x)(J_uu_x,u_x)\nabla_x\nabla_x^mu_x$ from appearing in the right hand 
side of \eqref{eq:aya02}. 
\subsection{Reduction to a system of PDEs.}
\label{subsection:GB2}
We investigate the structure of \eqref{eq:aya02} 
in the level of an $n$-component system  
for complex valued functions, 
by using the Generalized Hasimoto transformation.  
The method has been developed by several authors 
to clarify essential structures of some geometric 
dispersive partial differential equations 
for curve flows on K\"ahler manifolds. 
See, e.g., 
\cite{CSU,chihara2015,DZ2021,Koiso1997,
NSVZ,onodera0,onoderamomo,
RRS,SW2011,SW2013} and references therein.
We here follow the notation and 
the framework in \cite[Section~2]{onoderamomo} 
which were mainly motivated by \cite{CSU,RRS}. 
We should note that the investigation below is nothing but an 
observation in that we do not handle the periodic case $X=\TT$. 
(See Remark~\ref{remark:hasimoto}.) 
\par 
Let $u^{\infty}$ be a fixed point on $N$, 
and let 
$u=u(t,x):[-T,T]\times \RR\to N$ be a smooth solution 
to \eqref{eq:pde} such that 
$\displaystyle\lim_{x\to -\infty}u(t,x)=u^{\infty}$ 
and $u_x(t,\cdot):\RR\to (u(t,\cdot))^{-1}TN$ is in the Schwartz class for any $t\in [-T,T]$.  
Let $\left\{e^{\infty}_1, \ldots, e^{\infty}_n, 
J_{u^{\infty}}e^{\infty}_{1},\ldots, J_{u^{\infty}}e^{\infty}_{n}\right\}$ 
be an orthonormal basis
for $T_{u^{\infty}}N$ with respect to $h$. 
Following \cite{CSU} and \cite{RRS}, 
we take the orthonormal frame 
$\left\{e_1, \ldots, e_n, e_{n+1},\ldots, e_{2n}\right\}$
for $u^{-1}TN$ that satisfies 
\begin{align}
\nabla_xe_p(t,x)&=0,  
\quad 
\lim_{x\to -\infty}e_p(t,x)=e_p^{\infty}, 
\label{eq:mf01} 
\end{align} 
and $e_{p+n}=J_ue_p$ for any $p\in \left\{1,\ldots,n\right\}$. 
The K\"ahlerity $\nabla J=0$ ensures 
$\nabla_xe_{p+n}=0$ for any $p\in \left\{1,\ldots,n\right\}$. 
In the setting, we can write 
 \begin{align}
 \nabla_te_p
 &=
 \sum_{j=1}^{n}a_j^pe_j
 +
 \sum_{j=1}^{n}b_j^pJ_ue_j
 \quad 
 (\forall p\in \left\{1,\ldots,n\right\}), 
 \label{eq:tep}
 \end{align}
where 
$a_j^p=a_j^p(t,x)$ and 
$b_j^p=b_j^p(t,x)$ 
denote real-valued functions of $(t,x)$.    
We define   
\begin{align}
S_{p,q,r}^j
&=
\dfrac{1}{2}
\left\{
h(R(e_p,e_q)e_r,e_j)
+i\,h(R(e_p,e_q)e_r,J_ue_j)
\right\}
\nonumber
\\
&\quad
+
\dfrac{i}{2}
\left\{
h(R(e_p,J_ue_q)e_r,e_j)
+i\,h(R(e_p,J_ue_q)e_r,J_ue_j)
\right\}
\nonumber 
\end{align}
for $p,q,r,j\in \left\{1,\ldots,n\right\}$. 
The basic properties of $R$ are reflected to $S_{p,q,r}^j$ as follows: 
\begin{proposition}[\cite{onoderamomo}, Proposition~2.6]
\label{prop:tensor}
For any $U,V,W\in \Gamma(u^{-1}TN)$, 
\begin{align}
\langle
R(U,V)W
\rangle_j
&=
\sum_{p,q,r=1}^n
	  S_{p,q,r}^{j}\left(
	  \langle U \rangle_p\overline{\langle V \rangle_q}
	 -\langle V \rangle_p\overline{\langle U \rangle_q}
	  \right)
	  \langle W \rangle_r
 \quad 
 (\forall j\in \left\{1,\ldots,n\right\}).	  
	  \label{eq:23613}	 
\end{align}
\end{proposition}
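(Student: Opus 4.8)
The plan is to pass to the complexification of $u^{-1}TN$ and diagonalize $J_u$. First I would introduce, for each $p\in\{1,\dots,n\}$, the vectors
\[
E_p=\tfrac12(e_p-iJ_ue_p),\qquad \overline{E_p}=\tfrac12(e_p+iJ_ue_p),
\]
which satisfy $J_uE_p=iE_p$ and $J_u\overline{E_p}=-i\overline{E_p}$, so they span the $(1,0)$- and $(0,1)$-eigenspaces of $J_u$. Extending $h$ complex-bilinearly and using that $\{e_p,J_ue_p\}$ is orthonormal with $h(e_p,J_ue_q)=0$, one checks $h(E_p,E_q)=h(\overline{E_p},\overline{E_q})=0$ and $h(E_p,\overline{E_q})=\tfrac12\delta_{pq}$. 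With the convention $\langle V\rangle_j:=h(V,e_j)+i\,h(V,J_ue_j)$ for the $j$-th complex component, one then has $\langle V\rangle_j=2h(V,\overline{E_j})$, so $\langle\cdot\rangle_j$ extracts the $j$-th $(1,0)$-coefficient; in particular any real $V$ decomposes as $V=\sum_p(\langle V\rangle_pE_p+\overline{\langle V\rangle_p}\,\overline{E_p})$, while $\langle X\rangle_j=0$ whenever $X$ is of type $(0,1)$.

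Next I would exploit the Kähler structure of $N$. Since $\nabla J=0$, the curvature operator commutes with $J_u$, that is $R(\cdot,\cdot)J_u=J_uR(\cdot,\cdot)$, which forces $R(E_p,\overline{E_q})$ to preserve the type splitting; combined with the Kähler identity $R(J_uX,J_uY)=R(X,Y)$ this yields $R(E_p,E_q)=R(\overline{E_p},\overline{E_q})=0$. The key reduction is then to rewrite $S_{p,q,r}^j$: regrouping the two bracketed terms in its definition gives
\[
S_{p,q,r}^j=\tfrac12\langle R(e_p,e_q)e_r\rangle_j+\tfrac{i}{2}\langle R(e_p,J_ue_q)e_r\rangle_j=\langle R(e_p,\overline{E_q})e_r\rangle_j,
\]
because $e_q+iJ_ue_q=2\overline{E_q}$. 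Substituting $e_p=E_p+\overline{E_p}$ and dropping $R(\overline{E_p},\overline{E_q})=0$, then substituting $e_r=E_r+\overline{E_r}$ and dropping the type-$(0,1)$ piece (annihilated by $\langle\cdot\rangle_j$), I obtain the clean formula $S_{p,q,r}^j=\langle R(E_p,\overline{E_q})E_r\rangle_j$.

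Finally I would expand $R(U,V)W$ by multilinearity. Writing $U,V$ in their type decomposition and using $R(E_p,E_q)=R(\overline{E_p},\overline{E_q})=0$, only the mixed terms survive; after applying $R(\overline{E_p},E_q)=-R(E_q,\overline{E_p})$ and relabelling $p\leftrightarrow q$ one gets
\[
R(U,V)=\sum_{p,q=1}^n\left(\langle U\rangle_p\overline{\langle V\rangle_q}-\langle V\rangle_p\overline{\langle U\rangle_q}\right)R(E_p,\overline{E_q}).
\]
Applying this to $W=\sum_r(\langle W\rangle_rE_r+\overline{\langle W\rangle_r}\,\overline{E_r})$, taking $\langle\cdot\rangle_j$, and using that $R(E_p,\overline{E_q})\overline{E_r}$ is of type $(0,1)$ while $\langle R(E_p,\overline{E_q})E_r\rangle_j=S_{p,q,r}^j$, yields exactly \eqref{eq:23613}. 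I expect the only genuine obstacle to be the careful justification of the type-vanishing curvature identities $R(E_p,E_q)=0$ and the $J_u$-invariance of $R(E_p,\overline{E_q})$, since these are precisely where the (locally) Hermitian symmetric, hence Kähler, hypothesis on $N$ enters; the bilinear expansion and the index bookkeeping are routine. I note in particular that $\nabla R=0$ is not needed for this pointwise identity, only the Kähler property $\nabla J=0$.
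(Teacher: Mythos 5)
Your proof is correct, but note that this paper does not actually prove Proposition~\ref{prop:tensor}: it imports it verbatim from \cite{onoderamomo} (Proposition~2.6), so there is no in-document argument to compare against; the surrounding text only \emph{uses} the identity (e.g.\ to derive \eqref{eq:91}--\eqref{eq:141}). Judged on its own, your complexification route is the standard and natural one, and every step checks out: the algebra of $E_p=\tfrac12(e_p-iJ_ue_p)$ is right ($J_uE_p=iE_p$, $h(E_p,E_q)=0$, $h(E_p,\overline{E_q})=\tfrac12\delta_{pq}$, $\langle V\rangle_j=2h(V,\overline{E_j})$, and the decomposition $V=\sum_p(\langle V\rangle_pE_p+\overline{\langle V\rangle_p}\,\overline{E_p})$ matches \eqref{eq:momo2}); the regrouping $S_{p,q,r}^j=\tfrac12\langle R(e_p,e_q)e_r\rangle_j+\tfrac{i}{2}\langle R(e_p,J_ue_q)e_r\rangle_j=\langle R(e_p,\overline{E_q})e_r\rangle_j$ is exactly the definition read through the complex-linear functional $\langle\cdot\rangle_j$; and the reduction to $S_{p,q,r}^j=\langle R(E_p,\overline{E_q})E_r\rangle_j$ together with the bilinear expansion of $R(U,V)W$ reproduces \eqref{eq:23613} with the correct signs, the antisymmetry $R(\overline{E_p},E_q)=-R(E_q,\overline{E_p})$ and the relabelling $p\leftrightarrow q$ accounting for the bracket $\langle U\rangle_p\overline{\langle V\rangle_q}-\langle V\rangle_p\overline{\langle U\rangle_q}$. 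The one place where you correctly flag the need for care — $R(E_p,E_q)=R(\overline{E_p},\overline{E_q})=0$ and type preservation — is indeed where the K\"ahler hypothesis enters, and it is fully justified: $\nabla J=0$ gives $[R(X,Y),J_u]=0$ pointwise, the pair symmetry $h(R(X,Y)Z,W)=h(R(Z,W)X,Y)$ then yields $R(J_uX,J_uY)=R(X,Y)$, and evaluating on $(E_p,E_q)$ forces $R(E_p,E_q)=-R(E_p,E_q)=0$ after complex-bilinear extension. Your closing observation is also accurate and consistent with the paper's framing: the local symmetry $\nabla R=0$ plays no role in this pointwise identity (only the $x$-independence of $S_{p,q,r}^j$ in Proposition~\ref{proposition:Rloc} and the absence of $(\nabla R)$-terms in \eqref{eq:aya02} rely on it), whereas \eqref{eq:23613} needs only $\nabla J=0$ and the orthonormality of the parallel frame.
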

Here, for any $\Xi\in \Gamma(u^{-1}TN)$, 
$\langle\Xi\rangle_j$ 
denotes a complex-valued function defined by 
$\langle\Xi\rangle_j=h(\Xi,e_j)+i\,h(\Xi,J_ue_j)$, 
which equals to 
\begin{align}
\langle\Xi\rangle_j
&=
\Xi_j^R+i\,\Xi_j^I,  
\label{eq:momo2}
\end{align}
if we write  
$\Xi=\displaystyle\sum_{k=1}^n
   (\Xi_k^R+J_u\Xi_k^I)e_k$ 
by using real-valued functions 
$\Xi_k^R$ and $\Xi_k^I$. 
\begin{proposition}[\cite{onoderamomo}, Proposition~2.7]
\label{proposition:R4}
\begin{align}
S_{p,q,r}^j
&=
S_{r,q,p}^j
\quad
(\forall p,q,r,j\in \left\{1,\ldots,n\right\}). 
\label{eq:tsu3}
\end{align}
\end{proposition}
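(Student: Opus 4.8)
The plan is to derive the claimed symmetry $S_{p,q,r}^{j}=S_{r,q,p}^{j}$ purely from the first Bianchi identity together with the K\"ahler condition $\nabla J=0$; notice that the locally symmetric hypothesis $\nabla R=0$ is not needed here, since \eqref{eq:tsu3} is a pointwise algebraic identity for the curvature of any K\"ahler manifold. The first step is to rewrite the definition of $S_{p,q,r}^{j}$ using the complex-valued functional $\langle\,\cdot\,\rangle_j$ from \eqref{eq:momo2}. Recognizing the two braces in the definition as $\langle R(e_p,e_q)e_r\rangle_j$ and $\langle R(e_p,J_ue_q)e_r\rangle_j$ respectively gives
\[
S_{p,q,r}^{j}=\frac{1}{2}\,\langle R(e_p,e_q)e_r\rangle_j+\frac{i}{2}\,\langle R(e_p,J_ue_q)e_r\rangle_j .
\]

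Next I would form the difference $S_{p,q,r}^{j}-S_{r,q,p}^{j}$ and set $F:=R(e_p,e_q)e_r-R(e_r,e_q)e_p$ and $G:=R(e_p,J_ue_q)e_r-R(e_r,J_ue_q)e_p$, so that $S_{p,q,r}^{j}-S_{r,q,p}^{j}=\tfrac12\langle F\rangle_j+\tfrac{i}{2}\langle G\rangle_j$. The crucial step is then to apply the first Bianchi identity $R(X,Y)Z+R(Y,Z)X+R(Z,X)Y=0$ to the triples $(e_p,e_q,e_r)$ and $(e_p,J_ue_q,e_r)$; combined with the antisymmetry $R(X,Y)=-R(Y,X)$ (so that $-R(e_r,e_q)e_p=R(e_q,e_r)e_p$, and likewise with $J_ue_q$ in place of $e_q$), each antisymmetrized pair collapses to a single term, yielding $F=-R(e_r,e_p)e_q$ and $G=-R(e_r,e_p)J_ue_q$. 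Everything after this is bookkeeping.

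Finally I would invoke the K\"ahler condition. Since $\nabla J=0$ forces $R(X,Y)$ to commute with $J_u$, we get $G=-R(e_r,e_p)J_ue_q=-J_uR(e_r,e_p)e_q=J_uF$. The last ingredient is the elementary fact that $\langle\,\cdot\,\rangle_j$ is $\mathbb{C}$-linear with respect to $J_u$ acting as multiplication by $i$, i.e.\ $\langle J_u\Xi\rangle_j=i\langle\Xi\rangle_j$ for every $\Xi\in\Gamma(u^{-1}TN)$; this follows from the definition $\langle\Xi\rangle_j=h(\Xi,e_j)+i\,h(\Xi,J_ue_j)$ using the skew-adjointness of $J_u$ and the relation $h(J_u\cdot,J_u\cdot)=h(\cdot,\cdot)$. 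Substituting $G=J_uF$ then gives $\tfrac{i}{2}\langle G\rangle_j=\tfrac{i}{2}\langle J_uF\rangle_j=\tfrac{i^2}{2}\langle F\rangle_j=-\tfrac12\langle F\rangle_j$, which exactly cancels the term $\tfrac12\langle F\rangle_j$, so that $S_{p,q,r}^{j}-S_{r,q,p}^{j}=0$ for all $p,q,r,j$, as desired.

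The only delicate point, and the main place an error could creep in, is the handling of the complex structure in the two spots where it enters: verifying $\langle J_u\Xi\rangle_j=i\langle\Xi\rangle_j$ directly from the definition, and confirming that the K\"ahler condition truly yields $R(e_r,e_p)J_ue_q=J_uR(e_r,e_p)e_q$. Both are standard, but the factors of $i$ and the signs must line up precisely for the cancellation to occur; a single misplaced sign would produce $2S_{r,q,p}^{j}$ instead of $0$, so I would double-check these two identities carefully before assembling the final line.
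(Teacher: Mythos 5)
Your proof is correct, and each of the delicate points you flag does check out under the paper's conventions. The rewriting $S_{p,q,r}^{j}=\tfrac12\langle R(e_p,e_q)e_r\rangle_j+\tfrac{i}{2}\langle R(e_p,J_ue_q)e_r\rangle_j$ is exactly the definition; the first Bianchi identity (valid for the curvature convention fixed in the paper's footnote) together with antisymmetry gives $F=-R(e_r,e_p)e_q$ and $G=-R(e_r,e_p)J_ue_q$; the parallelism $\nabla J=0$ does force $R(X,Y)$ to commute with $J_u$ (apply $\nabla_X\nabla_Y$ to $J_uZ$ and pull $J_u$ through each covariant derivative), so $G=J_uF$; and $\langle J_u\Xi\rangle_j=i\langle\Xi\rangle_j$ follows from $h(J_u\Xi,J_ue_j)=h(\Xi,e_j)$ and $h(J_u\Xi,e_j)=-h(\Xi,J_ue_j)$, whence $\tfrac{i}{2}\langle J_uF\rangle_j=-\tfrac12\langle F\rangle_j$ and the cancellation is exact. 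Be aware, though, that the paper itself contains no proof to compare with: Proposition~\ref{proposition:R4} is quoted verbatim from \cite{onoderamomo} (Proposition~2.7), and only the conjugation symmetry of Proposition~\ref{proposition:R3} is argued in this paper, using the pair symmetry $h(R(Y_1,Y_2)Y_3,Y_4)=h(R(Y_3,Y_4)Y_1,Y_2)$ and $R(J_uY_1,Y_2)Y_3=-R(Y_1,J_uY_2)Y_3$. Your route---first Bianchi plus $[R(X,Y),J_u]=0$---is the standard derivation of the K\"ahler curvature symmetry $R_{i\bar{j}k\bar{l}}=R_{k\bar{j}i\bar{l}}$ in a unitary frame, of which \eqref{eq:tsu3} is precisely the frame-wise incarnation, and it has the virtue of being self-contained here. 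Your observation that $\nabla R=0$ is not needed is also accurate: \eqref{eq:tsu3} is a pointwise K\"ahler identity, and local symmetry enters only in Proposition~\ref{proposition:Rloc}, i.e.\ in $\p_xS_{p,q,r}^{j}=0$.
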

\begin{proposition}
\label{proposition:R3}
\begin{align}
S_{p,q,r}^j
&=
\overline{
S_{j,r,q}^p
}
\quad
(\forall p,q,r,j\in \left\{1,\ldots,n\right\}). 
\nonumber 
\end{align}
\end{proposition}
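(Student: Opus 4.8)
The plan is to expand $S_{p,q,r}^j$ into its real and imaginary parts and compare them term by term with those of $\overline{S_{j,r,q}^p}$, using only the algebraic symmetries of the K\"ahler curvature tensor. Writing $\mathcal{R}(X,Y,Z,W):=h(R(X,Y)Z,W)$ and setting
\[
A=\mathcal{R}(e_p,e_q,e_r,e_j),\quad
B=\mathcal{R}(e_p,e_q,e_r,J_ue_j),\quad
C=\mathcal{R}(e_p,J_ue_q,e_r,e_j),\quad
D=\mathcal{R}(e_p,J_ue_q,e_r,J_ue_j),
\]
the definition of $S_{p,q,r}^j$ gives $S_{p,q,r}^j=\tfrac12(A-D)+\tfrac{i}{2}(B+C)$. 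Since $A,B,C,D$ are all real, the same expansion applied to $S_{j,r,q}^p$ with primed scalars $A',B',C',D'$ (obtained from the substitution $(p,q,r,j)\mapsto(j,r,q,p)$) yields $\overline{S_{j,r,q}^p}=\tfrac12(A'-D')-\tfrac{i}{2}(B'+C')$. Hence the claim is equivalent to the two scalar identities $A-D=A'-D'$ and $B+C=-(B'+C')$.

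The tools are the standard symmetries of $\mathcal{R}$ --- antisymmetry in the first pair, antisymmetry in the second pair, and the block symmetry $\mathcal{R}(X,Y,Z,W)=\mathcal{R}(Z,W,X,Y)$ --- together with the K\"ahler identities forced by $\nabla J=0$. From $R(X,Y)J=JR(X,Y)$ and the $J$-invariance of $h$ I would record $\mathcal{R}(JX,Y,Z,W)=-\mathcal{R}(X,JY,Z,W)$, $\mathcal{R}(X,Y,JZ,W)=-\mathcal{R}(X,Y,Z,JW)$, and $\mathcal{R}(X,Y,JZ,JW)=\mathcal{R}(X,Y,Z,W)$. I note that the first Bianchi identity is not needed here.

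With these in hand the reduction is mechanical. For the real part, $A'=A$ follows from the two antisymmetries and the block symmetry exactly as in the proof of Proposition~\ref{proposition:R4} (no $J_u$ is involved); and $D'=D$ follows similarly, once $\mathcal{R}(e_q,J_ue_p,Z,W)=-\mathcal{R}(J_ue_q,e_p,Z,W)$ is used to transfer $J_u$ onto the first slot and the identity $\mathcal{R}(\cdot,\cdot,JZ,JW)=\mathcal{R}(\cdot,\cdot,Z,W)$ together with the second-pair antisymmetry is used to bring the second pair into the configuration appearing in $D$. For the imaginary part I would establish $B'=-C$ and $C'=-B$: from $B'=\mathcal{R}(e_j,e_r,e_q,J_ue_p)$ the block symmetry and the first-pair $J_u$-identity give $B'=\mathcal{R}(e_p,J_ue_q,e_j,e_r)=-C$, while from $C'=\mathcal{R}(e_j,J_ue_r,e_q,e_p)$ the block symmetry, an antisymmetry, and the second-pair identity $\mathcal{R}(e_p,e_q,e_j,J_ue_r)=-\mathcal{R}(e_p,e_q,J_ue_j,e_r)=B$ give $C'=-B$. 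Then $B+C=-(B'+C')$ is immediate, which finishes the verification.

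The only genuine difficulty is bookkeeping: each scalar must be driven to a canonical slot configuration while tracking a sign at every antisymmetry, every block swap, and every transfer of $J_u$ across a pair, and it is easy to drop a sign. I would guard against this by reducing every term to the single normal form $\mathcal{R}(e_p,\,\cdot\,,e_r,\,\cdot\,)$, with the remaining two slots occupied by $e_q$ or $J_ue_q$ and by $e_j$ or $J_ue_j$, before comparing; once all eight scalars are in this form the equalities of the real parts and the sign reversals of the imaginary parts can be read off directly. This symmetry is the exact analogue, at the level of the geometric coefficients $S_{p,q,r}^j$, of the defining relation of the space $\Gamma_2$, and complements Proposition~\ref{proposition:R4}, which is the analogue of the $\Gamma_1$ relation.
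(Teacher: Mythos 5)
Your proof is correct and takes essentially the same route as the paper: the paper establishes Proposition~\ref{proposition:R3} precisely by invoking the pair symmetries $h(R(Y_1,Y_2)Y_3,Y_4)=h(R(Y_3,Y_4)Y_1,Y_2)=h(R(Y_4,Y_3)Y_2,Y_1)$ together with the K\"ahler identity $R(J_uY_1,Y_2)Y_3=-R(Y_1,J_uY_2)Y_3$, omitting exactly the slot-by-slot bookkeeping that you carry out. Your reduction to the two scalar identities and the sign computations $A'=A$, $D'=D$, $B'=-C$, $C'=-B$ all check out.
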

\begin{proposition}[\cite{onoderamomo}, Proposition ~2.3]
\label{proposition:Rloc} 
\begin{align}
&\p_x S_{p,q,r}^{j}=0
\quad
(\forall p,q,r,j\in \left\{1,\ldots,n\right\}).
\nonumber
\end{align}
\end{proposition}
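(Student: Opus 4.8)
The plan is to differentiate the defining expression for $S_{p,q,r}^j$ inner-product by inner-product, exploiting that every geometric object entering the definition is parallel along $u$ in the $x$-direction. First I would assemble the three structural facts available in the present setting. The Levi-Civita connection is metric-compatible, $\nabla h=0$, so that for any $\Xi,\Theta\in\Gamma(u^{-1}TN)$ one has the Leibniz rule $\p_x h(\Xi,\Theta)=h(\nabla_x\Xi,\Theta)+h(\Xi,\nabla_x\Theta)$. The chosen frame is parallel: $\nabla_xe_p=0$ for every $p\in\left\{1,\ldots,2n\right\}$ by \eqref{eq:mf01}, where $\nabla_x(J_ue_p)=\nabla_xe_{p+n}=0$ follows from $\nabla_xe_p=0$ together with the K\"ahlerity $\nabla J=0$. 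Crucially, since $N$ is a compact locally Hermitian symmetric space, its Riemann curvature tensor is parallel, $\nabla R=0$, so that the pull-back $(\nabla_xR)=(\nabla_{u_x}R)$ along $u$ also vanishes identically.

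Next I would verify that each of the four inner products occurring in the definition of $S_{p,q,r}^j$ has vanishing $x$-derivative. Take a representative term $h(R(e_p,e_q)e_r,e_j)$. Applying metric compatibility,
\begin{align}
\p_x h(R(e_p,e_q)e_r,e_j)
&=
h(\nabla_x\{R(e_p,e_q)e_r\},e_j)
+
h(R(e_p,e_q)e_r,\nabla_xe_j),
\nonumber
\end{align}
and the second summand is zero because $\nabla_xe_j=0$. For the first summand I would expand by the Leibniz rule for the covariant derivative of the curvature tensor acting on frame sections,
\begin{align}
\nabla_x\{R(e_p,e_q)e_r\}
&=
(\nabla_xR)(e_p,e_q)e_r
+R(\nabla_xe_p,e_q)e_r
+R(e_p,\nabla_xe_q)e_r
+R(e_p,e_q)\nabla_xe_r,
\nonumber
\end{align}
in which the first term vanishes by $\nabla R=0$ and the remaining three vanish by $\nabla_xe_p=\nabla_xe_q=\nabla_xe_r=0$. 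Hence this inner product is $x$-independent.

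The other three inner products differ only by replacing $e_q$ with $J_ue_q$ and/or $e_j$ with $J_ue_j$ in the slots; since $\nabla_x(J_ue_q)=0$ and $\nabla_x(J_ue_j)=0$ by the same parallelism, the identical computation applies verbatim and each of them is likewise $x$-independent. Differentiating the definition of $S_{p,q,r}^j$, whose coefficients $\tfrac12$ and $\tfrac{i}{2}$ are constants, then gives $\p_xS_{p,q,r}^j=0$ for all $p,q,r,j$, as claimed. I do not expect a genuine obstacle here; the one conceptual point to highlight cleanly is that the parallel frame alone controls every term \emph{except} $(\nabla_xR)(e_p,e_q)e_r$, and it is precisely the locally symmetric hypothesis $\nabla R=0$ that removes this remaining term. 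Thus the proof hinges on combining $\nabla h=0$, $\nabla J=0$, and $\nabla R=0$, the last being the defining feature of the locally Hermitian symmetric space.
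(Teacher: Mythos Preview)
Your proof is correct and is the standard argument; the paper itself does not supply a proof of this proposition but simply cites it from \cite{onoderamomo}. Your use of metric compatibility, the parallel frame $\nabla_xe_p=0$ (and $\nabla_x(J_ue_p)=0$ via $\nabla J=0$), and the locally symmetric condition $\nabla R=0$ is exactly what is needed.
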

Although Proposition~\ref{proposition:R3} has not been 
presented in \cite{onoderamomo}, 
we can easily show it by using the following basic properties 
of $(N,J,h)$ as a K\"ahler manifold: 
\begin{align}
&
h(R(Y_1,Y_2)Y_3,Y_4)
=h(R(Y_3,Y_4)Y_1,Y_2)
=h(R(Y_4,Y_3)Y_2,Y_1), 
\nonumber
\\
&R(J_uY_1,Y_2)Y_3=-R(Y_1,J_uY_2)Y_3
\nonumber
\end{align}
for any $Y_1,\ldots,Y_4\in \Gamma(u^{-1}TN)$. 
We omit the detail.  
\par 
We set $U_m=\nabla_x^mu_x$ and 
represent $u_x, u_t, U_m\in \Gamma(u^{-1}TN)$ by 
\begin{align}
u_x&=\sum_{p=1}^{n}(\xi_p+\eta_pJ_u)e_p, 
u_t=\sum_{p=1}^{n}(\mu_p+\nu_pJ_u)e_p,  
U_m=\sum_{p=1}^{n}(V_p+J_uW_p)e_p, 
 \label{eq:Um}
 \end{align} 
where $\xi_p$, $\eta_p$, $\mu_p$, $\nu_p$, $V_p$, $W_p$ 
are real-valued functions of $(t,x)$. 
Set 
$Q_j:=\langle u_x\rangle_j$, 
$P_j:=\langle u_t\rangle_j$,  
$Z_j:=\langle U_m\rangle_j$
for 
$j\in \left\{1,\ldots,n\right\}$. 
By \eqref{eq:momo2}, they satisfy  
 \begin{align}
 Q_j&=\xi_j+i\eta_j, 
 \quad
 P_j=\mu_j+i\nu_j, 
 \quad 
 Z_j=V_j+iW_j.
\nonumber 
 \end{align}  
It follows from \eqref{eq:Um} and \eqref{eq:tep}  
\begin{align}
 \nabla_tU_m
 &=
 \sum_{p=1}^{n}(\p_tV_p+J_u\p_tW_p)e_p
 +
 \sum_{p=1}^{n}(V_p+J_uW_p)\nabla_t e_p. 
 \nonumber
\end{align}
Since $J_u^2=-\textrm{id}$ being the minus identity on $\Gamma(u^{-1}TN)$, this shows 
\begin{align}
\langle
\nabla_tU_m
\rangle_j
   &= 
   \p_tV_j+i\p_tW_j
   +\sum_{p=1}^{n}\left\{
   (a_j^pV_p-b_j^pW_p)+i(a_j^pW_p+b_j^pV_p)
    \right\}
   \nonumber
   \\
   &=
      \p_tZ_j
      +\sum_{p=1}^{n}
      (a_j^p+ib_j^p)Z_p.
      \label{eq:41}
 \end{align}
 Here, as is proved 
   in \cite[Eq. (2.61)]{onoderamomo},  
  \begin{align}
  \sum_{p=1}^{n}
  (a_j^p+ib_j^p)Z_p
  &=
  O\left(
  (\|Q\|_{C([-T,T];H^3(\RR;TN))}^2
  +\|Q\|_{C([-T,T];H^3(\RR;TN))}^4)
  |Z|
  \right).
  \label{eq:161}
  \end{align}
From 
$J_u^2=-\textrm{id}$, 
$\nabla J=0$, 
\eqref{eq:mf01}, 
and \eqref{eq:momo2}, 
we have   
\begin{align}
\langle
\left(
 a\,J_u\nabla_x^4
 +\lambda\,J_u\nabla_x^2
 \right)
 U_m
\rangle_j
  &=
   i (a\p_x^4+\lambda \p_x^2)Z_j. 
   \label{eq:42}
 \end{align} 
Further, applying \eqref{eq:23613} in Proposition~\ref{prop:tensor}, 
we can obtain  
\begin{align}
\langle 
R(\nabla_x^2U_m,J_uu_x)u_x
\rangle_j
&=
-i
\sum_{p,q,r=1}^n
	  S_{p,q,r}^{j}\left(
	  \p_x^2Z_p
	  \overline{Q_q}
	 +Q_p\overline{\p_x^2Z_q}
	  \right)
	  Q_r, 
	  \label{eq:91}
\\
\langle 
\nabla_x\left\{
R(J_uu_x,u_x)\nabla_xU_m
\right\}
\rangle_j
&=
\p_x\left\{
2i\sum_{p,q,r=1}^n
	  S_{p,q,r}^{j}
	  Q_p
	  \overline{Q_q}
	  \p_xZ_r
\right\}, 
\label{eq:103}
\\
\langle 
R(J_u\nabla_xu_x,u_x)\nabla_xU_m
\rangle_j
&=
i\sum_{p,q,r=1}^n
	  S_{p,q,r}^{j}
	  \p_x(Q_p\overline{Q_q})
	  \p_xZ_r, 
\label{eq:111}
\\
\langle 
R(\nabla_xu_x,u_x)J_u\nabla_xU_m
\rangle_j
&=i\sum_{p,q,r=1}^n
	  S_{p,q,r}^{j}\left(
	  \p_xQ_p\overline{Q_q}
	 -Q_p\overline{\p_xQ_q}
	  \right)
	  \p_xZ_r, 
	  \label{eq:121}
\end{align}
and 
\begin{align}
&
\langle 
R(\nabla_xU_m,\nabla_xu_x)J_uu_x
+
R(\nabla_xU_m,J_uu_x)\nabla_xu_x
\rangle_j
\nonumber
\\
&=-i\sum_{p,q,r=1}^n
	  	  	  S_{p,q,r}^{j}
	  	  	   \left(
	  	  	 \p_xQ_p\overline{Q_q}
	  	  	 -Q_p\overline{\p_xQ_q}
	  	  	  \right)\p_xZ_r 
	  	  -2i\sum_{p,q,r=1}^n
	  	  	  	  S_{p,q,r}^{j}
	  	  	  	  \overline{\p_xZ_q}
	  	  	  	  \p_xQ_pQ_r.	  		  	   
	  	  \label{eq:141}
\end{align} 
We show only \eqref{eq:141} here as an example. 
Applying \eqref{eq:23613} 
for 
$U=\nabla_xU_m$, $V=\nabla_xu_x$, $W=J_uu_x$
where  $\langle\nabla_xU_m\rangle_p=\p_xV_p+i\p_xW_p
=\p_xZ_p$, 
$\langle \nabla_xu_x\rangle_q=\p_x\xi_q+i\p_x\eta_q=\p_xQ_q$, 
$\langle J_uu_x\rangle_r=-\eta_r+i\xi_r=iQ_r$, 
we obtain 
\begin{align}
L_1&:=\langle 
R(\nabla_xU_m,\nabla_xu_x)J_uu_x
\rangle_j
=
i\sum_{p,q,r=1}^n
	  S_{p,q,r}^{j}\left(
	  \p_xZ_p\overline{\p_xQ_q}
	 -\p_xQ_p\overline{\p_xZ_q}
	  \right)
	  Q_r. 
\nonumber
\end{align}
In the same way, applying \eqref{eq:23613} for 
$U=\nabla_xU_m$, $V=J_uu_x$, $W=\nabla_xu_x$
where  $\langle\nabla_xU_m\rangle_p=\p_xZ_p$, 
$\langle J_uu_x\rangle_q=iQ_q$, 
$\langle \nabla_xu_x\rangle_r=\p_xQ_r$, 
we obtain  
\begin{align}
L_2&:=\langle
R(\nabla_xU_m,J_uu_x)\nabla_xu_x
\rangle_j
=
-i\sum_{p,q,r=1}^n
	  	  S_{p,q,r}^{j}\left(
	  	  \p_xZ_p\overline{Q_q}
	  	 +Q_p\overline{\p_xZ_q}
	  	  \right)
	  	  \p_xQ_r. 
\nonumber
\end{align}
From them, we have 
\begin{align}
&L_1+L_2
=
-i\sum_{p,q,r=1}^n
	  S_{p,q,r}^{j}
	   \p_xZ_p
	   \left(
	   \overline{Q_q}\p_xQ_r-
	 \overline{\p_xQ_q}Q_r
	  \right)
\nonumber
\\
&\quad\qquad\qquad
-i\sum_{p,q,r=1}^n
	  	  S_{p,q,r}^{j}
	  	  \overline{\p_xZ_q}
	  	  \left(
	  	  \p_xQ_pQ_r
	  	  +Q_p\p_xQ_r
	  	  \right).  
\nonumber
\end{align} 
Further, by replacing indexes $p$ and $r$ in both summation terms in 
the right hand side of the above, and by using 
\eqref{eq:tsu3} in Proposition~\ref{proposition:R4}, 
we see $L_1+L_2$ can be rewritten as the right hand side of 
\eqref{eq:141}. 
\par
Consequently, substituting \eqref{eq:aya02} into 
\eqref{eq:41} and combining 
\eqref{eq:161}-\eqref{eq:141}, we see  
$Z={}^t(Z_1,\ldots,Z_n)(t,x):[-T,T]\times \RR\to \mathbb{C}^n$ 
satisfies
\begin{align}
&(\p_t-iM_a\p_x^4-iM_{\lambda} \p_x^2)Z
\nonumber
\\
&=
d_{1,m}P_1^1(Q)\p_x^2Z
+d_{2,m}\p_x\left\{
P_2^2(Q)\p_xZ
\right\}
+d_{3,m}P_3^3(Q)\p_xZ
\nonumber
\\
&\quad
+(d_{4,m}-d_{5,m})P_4^4(Q)\p_xZ
+d_{5,m}P_5^5(Q)\p_xZ
+s_{m}, 
	  \label{eq:361}
	  \end{align}
where $M_a=aI_n$, $M_{\lambda}=\lambda I_n$, 
$P_{k}^k(Q)$ are defined by \eqref{eq:b511}-\eqref{eq:b515} 
under the special setting $S_{p,q,r}^{1,j}=\ldots=S_{p,q,r}^{5,j}=S_{p,q,r}^j$ 
for all $p,q,r,j\in \left\{1,\ldots,n\right\}$, and 
$\left\|s_{m}(t)\right\|_{L^2(\RR)}\leqslant 
C(\|Q\|_{C([-T,T];H^3(\RR))})\|Q(t)\|_{H^m}$. 
\par 
The bad terms 
$R(\nabla_x^2U_m, J_uu_x)u_x$ and 
$R(J_u\nabla_xu_x,u_x)\nabla_xU_m$ 
in \eqref{eq:aya02} 
are expressed by 
\eqref{eq:91} and 
\eqref{eq:111} respectively. 
Hence, it is expected that 
the classical energy estimate for the time-derivative 
of $\|Z(t)\|_{L^2(\RR)}^2$ (to be bounded above by $\|Q(t)\|_{H^m(\RR)}^2$ 
multiplied by a positive constant)
involves loss of derivatives which comes  from 
the first and the third terms of the right hand side of  
\eqref{eq:361}, and no loss of derivatives occurs from the other terms thanks to 
the properties of $S_{p,q,r}^j$ in 
Propositions~\ref{proposition:R4} 
and \ref{proposition:R3}.  
Based on the expectation,  
the author decided to investigate  
$\p_x\left(F(Q,\p_xQ,\p_x^2Q)\right)$
and $\p_x^m\left(F(Q,\p_xQ,\p_x^2Q)\right)$ 	 
for \eqref{eq:apde} by using $P_k^{\ell}(Q)$ 
defined by \eqref{eq:b511}-\eqref{eq:b515} 
with the conditions (G1)-(G5), 
finding the equivalent ones (B1)-(B6) which are 
easier to be checked whether they are satisfied or not.  
Additionally, the function $V_m$ defined by \eqref{eq:gaya02} 
turns out to satisfy 
\begin{align} 
\langle 
V_m
\rangle_j
&=
Z_j+
\dfrac{d_{1,m}}{2a}
\sum_{p,q,r=1}^n
S_{p,q,r}^j
\left(
\p_x^{m-2}Q_p\overline{Q_q}
-
Q_p\overline{\p_x^{m-2}Q_q}
\right)
Q_r
\nonumber 
\\
&\quad 
-
\dfrac{d_{1,m}-d_{3,m}}{4a}
\sum_{p,q,r=1}^n
S_{p,q,r}^j
Q_p\overline{Q_q}
\p_x^{m-2}Q_r
\nonumber
\end{align}
for each $j\in \left\{1,\ldots,n\right\}$. 
This gave the author a hint to arrive at the form \eqref{eq:V_j} with 
\eqref{eq:b381}-\eqref{eq:b383} and the analogical ones   
to prove Theorem~\ref{theorem:lwp}.  
\begin{remark}
\label{remark:hasimoto} 
The above transformation makes sense 
if $X=\RR$
and the solution $u$ has an edge point on $N$ as $x\to-\infty$. 
Some additional arguments are required 
to apply the transformation rigorously if $X=\TT$, 
involving the holonomy correction associated with the closed curve solution, 
which is indicated by the previous study \cite{chihara2015,RRS}. 
Fortunately however, without pursuing the direction, 
we can derive a sufficient information to arrive at \textup{(B1)-(B6)} 
only from the observation under $X=\RR$. 
This may be mainly because the loss of derivatives does not occur from the 
non-local integral terms of the system (\cite[Eq. (1.10)]{onoderamomo})
which destroy the periodicity in $x$. 
\end{remark}
%
\section*{Acknowledgments}
The author has been supported by 
JSPS Grant-in-Aid for Scientific Research (C) 
Grant Numbers JP20K03703 and JP24K06813. 

\end{document}